\documentclass[11pt,reqno]{amsart}

\usepackage[T1]{fontenc}
\usepackage[utf8]{inputenc}
\usepackage{amsmath,amssymb}
\usepackage{enumitem}
\usepackage[hidelinks,bookmarks=false]{hyperref}
\usepackage[nameinlink,noabbrev]{cleveref}

\theoremstyle{plain}
\newtheorem{theorem}{Theorem}[section]
\newtheorem{lemma}[theorem]{Lemma}
\newtheorem{proposition}[theorem]{Proposition}

\theoremstyle{definition}
\newtheorem{definition}[theorem]{Definition}
\newtheorem{remark}[theorem]{Remark}

\makeatletter
\let\amsremark\remark
\let\endamsremark\endremark
\newif\ifremarkendplaced
\newcommand{\remarkendsymbol}{\ensuremath{\clubsuit}}
\newcommand{\remarkendmark}{%
  \leavevmode\unskip\nobreak\hbox{}\nobreak\hfill
  \makebox[0pt][r]{\quad\hbox{\remarkendsymbol}}%
  \global\remarkendplacedtrue
}
\newcommand{\remarkendhere}{%
  \ifmmode
    \global\tag@true
    \global\@eqnswfalse
    \gdef\df@tag{%
      \maketag@@@{\remarkendsymbol\quad\tagform@\theequation}%
      \def\@currentcounter{equation}%
      \def\@currentlabel{\p@equation\theequation}%
    }%
    \global\remarkendplacedtrue
  \else
    \remarkendmark
  \fi
}
\renewenvironment{remark}[1][]{%
  \global\remarkendplacedfalse
  \@ifempty{#1}{\amsremark}{\amsremark[#1]}%
}{%
  \ifremarkendplaced\else\remarkendmark\fi
  \endamsremark
}

\renewenvironment{proof}[1][\proofname]{\par
  \pushQED{\qed}%
  \normalfont \topsep6\p@\@plus6\p@\relax
  \trivlist
  \item[\hskip\labelsep
        \bfseries\itshape
    #1\@addpunct{.}]\ignorespaces
}{%
  \popQED\endtrivlist\@endpefalse
}
\let\amsparagraph\paragraph
\renewcommand{\paragraph}[1]{\amsparagraph{\underline{#1}}}

\newcommand{\R}{\mathbf R}
\newcommand{\bfS}{\mathbf S}
\newcommand{\cA}{\mathcal{A}}

\newcommand{\cI}{\mathcal{I}}
\newcommand{\cX}{\mathcal{X}}
\newcommand{\e}{\mathrm{e}}
\newcommand{\ud}{\,\mathrm{d}}
\newcommand{\1}{\mathbf 1}
\newcommand{\half}{\frac12}
\let\tilde\widetilde
\renewcommand{\le}{\leqslant}

\renewcommand{\leq}{\leqslant}
\renewcommand{\geq}{\geqslant}
\newcommand{\argmin}{\mathop{\rm arg\,min}}
\newcommand{\argmax}{\mathop{\rm arg\,max}}
\DeclareMathOperator{\conv}{\bf conv}
\DeclareMathOperator{\cl}{\bf cl}
\DeclareMathOperator{\cone}{\bf cone}
\newcommand{\Var}{\mathrm{Var}}
\newcommand{\E}{\operatorname*{\mathbf{E}}\ilimits@}
\renewcommand{\P}{\operatorname*{\mathbf{P}}\ilimits@}
\newcommand{\ie}{\textit{i}.\textit{e}., }
\newcommand{\eg}{\textit{e}.\textit{g}., }
\newcommand{\cprime}{\raise.9ex\hbox{\scriptsize$\prime$}}
\makeatother

\let\asymp\simeq
\newcommand{\twomax}[2]{\ensuremath{#1 \vee #2}}
\newcommand{\twomin}[2]{\ensuremath{#1 \wedge #2}}
\newcommand{\Proj}[1]{\Pi_{#1}}
\newcommand{\rad}{\mathrm{rad}}
\newcommand{\CvxLip}[1]{\mathsf{C}_{\rm CL}(#1)}
\newcommand{\RightDer}{\mathsf{D}_+}
\newcommand{\AllConvSets}[1]{\mathcal{C}_{#1}}
\newcommand{\AllConvCones}[1]{\mathcal{K}_{#1}}
\newcommand{\ExtProbProj}[3]{\mathsf{A}^\star_{#1, #2}(#3)}
\newcommand{\ExtProbProjCone}[3]{\tilde{\mathsf{A}}^\star_{#1, #2}(#3)}
\newcommand{\ExtProbSuppVec}[3]{\mathsf{B}^\star_{#1, #2}(#3)}
\newcommand{\SuppVec}[1]{s_{#1}}

\title[Metric projections onto convex sets]{On the Metric Projection onto a Convex Set: \\[0.1em]
\small reverse H\"older inequalities and upper bounds}
\author{Reese Pathak}
\thanks{We gratefully acknowledge support from the NSF under grant
DMS-2503579}
\address{Department of Statistics, UC
Berkeley,\and\newline\hspace*{\parindent}%
School of Oper. Res. \& Inf. Engineering (ORIE), Cornell University}
\email{pathakr@berkeley.edu}

\date{\today}

\begin{document}

\begin{abstract}
We study the $L^p(\mu)$-norm of the metric projection onto 
a closed, convex set $C \subset \R^n$ when $\mu$ is
the uniform measure on the sphere 
or the standard Gaussian measure on $\R^n$. 
Up to universal constants, we determine the optimal reverse Hölder
inequalities (\ie $L^q-L^p$ estimates for $q > p$) for both settings and for
all
$1 \leq p < q \leq \infty$. The optimal constants in these inequalities depend polynomially on the \mbox{dimension $n$}. 
We establish upper bounds for the expected norm of the metric projection for a wide class of probability measures. Our inequalities
improve and extend previous results of S. Chatterjee. 
\end{abstract}

\maketitle

\section{Introduction}

Let $\AllConvSets{n}$ denote the class of closed, convex sets $C \subset \R^n$ 
that contain the origin. The metric projection onto $C \in \AllConvSets{n}$ at the point 
$x \in \R^n$ is given by 
\[
\Proj{C}(x) = \argmin_{y \in C} \, \|x - y\|_2.
\]
The key quantity of interest in this paper is the \emph{norm} of the
projection, $\|\Proj{C}(x)\|_2$, and, 
in particular, its behavior under (standard) Gaussian measure $\gamma_n$ on
$\R^n$ and the uniform measure $\sigma_{n-1}$ on the unit sphere $\bfS^{n-1}$. 

The norm of the metric projection plays a key role in several questions in both pure and applied
mathematics. For instance, by a recent result of the author and N.\
Zhivotovskiy~\cite[Theorem 2.2]{PatZhi26}, for bounded $C \in \AllConvSets{n}$,
\[
\E \bigg[\sup_{x \in C} \, \langle G, x\rangle \bigg]=
\half \int_0^\infty \frac{\E\|\Proj{\lambda C}(G)\|_2^2}{\lambda^2}\, \ud
\lambda,
\]
where the expectations above are taken with respect to $G \sim \gamma_n$.
Hence, the Gaussian width---a key quantity in
geometric
analysis~\cite{ArtAviGiaMil15}---is closely related to the typical size of
projections
onto
dilates of $C \in \AllConvSets{n}$,
\[
\lambda C = \{\lambda x: x \in C\}, \quad \mbox{for}~\lambda > 0.
\]

In applied mathematics---particularly in statistical estimation, signal
processing, and optimal recovery---the norm of the projection governs the error
in certain recovery procedures. Consider a noisy observation
\[
Y = x_\star + \xi, \quad \mbox{for some}~x_\star \in \cX \subset \R^n,
\] 
where $\xi$ is some random vector (``noise''). If $\cX$ is closed and
convex, it is common to recover $x_\star$ via 
\[
\hat x \equiv \hat x(Y) = \argmin_{x \in \cX} \, 
\|Y-x\|_2.
\]
Depending on the context, this method of signal recovery is
known as maximum likelihood estimation, least squares estimation, or empirical
risk minimization; we refer to the books~\cite{vdG00,Tsy09,Wai19}.
The recovery error satisfies:
\[
\E \|\hat x(Y) - x_\star\|_2^p = 
\E \|\Pi_{\cX - x_\star}(\xi)\|_2^p.
\]
Notably, the translate $\cX-x_\star$ is an element of
$\AllConvSets{n}$;
plainly,
it
is
a
closed,
convex set that contains the origin. As seen above, the stochastic behavior of
the
norm
of the projection onto such sets essentially determines the recovery error.
This was also the motivation of the seminal paper of S.\ Chatterjee~\cite{Cha14},
which we discuss in greater detail in~\Cref{sec:bounds-on-the-expected-norm}.

\medskip 

We now turn to our main results.

\subsection{Failure of dimension-free reverse Hölder inequalities.} 

Let $\xi \sim \mu$ be a random
vector
in
$\R^n$ and let $C \in \AllConvSets{n}$. A basic question regarding metric projections is whether they enjoy good \emph{reverse Hölder inequalities}: \ie whether the $L^q(\mu)$ and $L^p(\mu)$ norms of
$\|\Pi_C(\xi)\|_2$ are
comparable,
for
$q$
not much larger than $p$. 

A classical setting where these inequalities hold is when
$\mu$ is log-concave and $f \colon \R^n \to \R_+$ is a seminorm.
Then (\eg~\cite[Theorem~3.5.11]{ArtAviGiaMil15}):
\[
\|f\|_{L^q(\mu)} \lesssim \frac{q}{p} \, 
\|f\|_{L^p(\mu)}, 
\quad \mbox{for}~1 \leq p \leq q < \infty.
\]
The inequality is referred to as \emph{dimension-free} because it does not depend on the ambient dimension $n$.
The map ${x
\mapsto
\|\Pi_C(x)\|_2}$, for general $C \in \AllConvSets{n}$, does not 
satisfy these assumptions: it is \emph{not} convex, even, or $1$-positively homogeneous. 
Thus,
it is \emph{a priori} unclear
whether such dimension-free inequalities can hold.
Our results show that such reverse Hölder inequalities
\emph{cannot} be dimension-free, either on the sphere or in Gauss space.

\medskip 

Formally, we seek the smallest number $A \geq 1$---as a function of $n, p,
q$---for
which the reverse Hölder inequality, 
\[
\Big(\E_\mu \|\Proj{C}(X)\|_2^q\Big)^{1/q} \leq A \, \Big(\E_\mu
\|\Proj{C}(X)\|_2^p\Big)^{1/p},
\]
holds uniformly for $C \in \AllConvSets{n}$, for a given pair $(p,q)$ with $1 \leq
p
\leq q \leq \infty$. 
Equivalently, we study the extremal problem
\[
\ExtProbProj{p}{q}{\mu}
 = \sup_{C \in \AllConvSets{n}, C \neq \{0\}} \,
\frac{\|\Proj{C}\|_{L^q(\mu)}}{\|\Proj{C}\|_{L^p(\mu)}}.
\]
Note that we always restrict to $C \in \AllConvSets{n}$ for which the denominator above is positive. Additionally, note that for a vector-valued
measurable map $f
\colon
\R^n \to \R^m$, we set 
\[
\|f\|_{L^p(\mu)} = \Big(\E_{\xi\sim \mu}
\|f(\xi)\|_{2}^p\Big)^{1/p} \quad \mbox{for}~1 \leq p < \infty.
\]
We set $\|f\|_{L^\infty(\mu)}$ to be the $\mu$-essential supremum of $\|f(\xi)\|_2$. 
\medskip 

As stated next, we determine $\ExtProbProj{p}{q}{\mu}$, apart from universal constants, for spherical measure\footnote{By a rescaling argument, \Cref{thm:optimal-reverse-holder-sphere} also holds for $\mu = \mathrm{Unif}(\alpha \bfS^{n-1})$, for any $\alpha > 0$.},
\ie $\mu =
\sigma_{n-1}$ (\Cref{thm:optimal-reverse-holder-sphere}), and for Gauss space,
\ie $\mu =
\gamma_n$ (\Cref{thm:optimal-reverse-holder-Gauss}).  

\medskip 

Throughout, for two
functions
$f, g \colon \cI \to \R_+$ we write $f \lesssim g$ (or $g \gtrsim f$) if there exists a constant $C > 0$ such that
$f(\iota)
\leq C\, g(\iota)$ for all $\iota \in \cI$. If $f\lesssim g$ and $g\lesssim f$, we
write $f \simeq g$. For $r \in [1, \infty)$,
we
put 
\[
\twomin{r}{n} = \min\{r, n\}, \quad \mbox{and} \quad
\twomax{r}{n} = \max\{r, n\}.
\]
For $r= \infty$, we take $\twomin{r}{n} = n$. Put $p/q = 0$ for $p$ finite
and $q = \infty$. 

\begin{theorem}[Solution to extremal problem on the sphere]
\label{thm:optimal-reverse-holder-sphere}
For $n \geq 1$ and $1 \leq p < q \leq \infty$, it holds that
\[
\ExtProbProj{p}{q}{\sigma_{n-1}}\simeq
\bigg(\frac{n}{(\twomin{p}{n})\log(\e \tfrac{n}{\twomin{q}{n}})}\bigg)^\tau,
\quad \mbox{where}~~\tau = \frac{1}{2}\Big(1 -
\frac{p}{q}\Big).
\]
\end{theorem}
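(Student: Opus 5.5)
The plan is to prove matching upper and lower bounds with the same extremal configuration: \emph{segments} $C=[0,Ru]$ with $u\in\bfS^{n-1}$ and $R\in(0,1]$. Write $f(x)=\|\Proj{C}(x)\|_2$. On the sphere $f$ is $1$-Lipschitz and $f\le 1$, since $\Proj{C}$ is $1$-Lipschitz and $\Proj{C}(0)=0$. Let $\theta=\langle X,u\rangle$ for $X\sim\sigma_{n-1}$. The one-dimensional inputs are the density of $\theta$, which is proportional to $(1-t^2)^{(n-3)/2}$, and the cap estimates
\[
\sigma_{n-1}(\theta\ge t)\approx \e^{-nt^2/2}
\]
in the relevant ranges, together with the moment asymptotics $\E\theta_+^r\simeq (\twomin{r}{n}/n)^{r/2}$.

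\textbf{Lower bound.} For $C=[0,Ru]$ we have $f(x)=\min\{\theta_+,R\}$. I will compute both norms up to constants.
\begin{itemize}
\item If $R\gtrsim\sqrt{p/n}$, then $\|f\|_p\simeq\sqrt{(\twomin{p}{n})/n}$.
\item For $q$ large, $\|f\|_q\simeq R\,\sigma_{n-1}(\theta\gtrsim R)^{1/q}\approx R\,\e^{-nR^2/(2q)}$.
\end{itemize}
I will take $R^2\simeq \log(\e n/(\twomin{q}{n}))/n$. This makes $\e^{-nR^2/(2q)}$ essentially equal to $(\twomin{q}{n}/n)^{1/(2q)}$, while truncation at $R$ still costs only a constant in the $L^q$ norm. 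The resulting ratio is $(R^2 n/(\twomin{p}{n}))^{\tau}$, as the exponent arises from the competition between $R^q\cdot\text{cap}$ and the $p$-norm. If this choice does not give the sharp exponent $\tau=\tfrac12(1-p/q)$, I will instead optimize $R$ in the exact one-variable expression
\[
\frac{\big(R^q\,\e^{-nR^2/2}+(q/n)^{q/2}\1\{R^2\ge q/n\}\big)^{1/q}}{\big(\ldots\big)^{1/p}},
\]
whose optimum gives the formula in \Cref{thm:optimal-reverse-holder-sphere}. This is a calculus exercise.

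\textbf{Upper bound.} Let $R=\|f\|_{L^\infty(\sigma_{n-1})}$ and pick $x_0$ with $y=\Proj{C}(x_0)$, $\|y\|_2=R$ (up to $\epsilon$), and set $u=y/R$. Since $[0,y]\subset C$, we have $f(x)\ge\|\Proj{[0,y]}(x)\|_2=\min\{\theta_+,R\}$ pointwise. Hence $\|f\|_p$ dominates the segment's $p$-norm, and so
\[
\|f\|_q\le \|f\|_\infty^{1-p/q}\|f\|_p^{p/q}
\quad\Longrightarrow\quad
\frac{\|f\|_q}{\|f\|_p}\le\Big(\frac{R}{\|\min\{\theta_+,R\}\|_p}\Big)^{1-p/q}.
\]
This already gives $(n/(\twomin{p}{n}))^{\tau}$ when $R\gtrsim\sqrt{p/n}$, and a bounded ratio when $R\lesssim\sqrt{p/n}$.

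\textbf{Main obstacle.} The hard step is recovering the factor $\log(\e n/(\twomin{q}{n}))^{-\tau}$, which crude interpolation against $L^\infty$ loses. I will split at the level $s=\sqrt{\log(\e n/(\twomin{q}{n}))/n}$ and write $\E f^q=\E f^q\1\{f\le s\}+\E f^q\1\{f>s\}$.
\begin{itemize}
\item The first piece is handled by interpolation with $s$ in place of $R$. This yields exactly $(s^2n/(\twomin{p}{n}))^{\tau}$ once $\|f\|_p\gtrsim\sqrt{(\twomin{p}{n})/n}$.
\item For the second piece I will use concentration of the $1$-Lipschitz function $f$ on $\bfS^{n-1}$ around its median $m\lesssim\|f\|_p$, namely $\sigma_{n-1}(f>m+t)\le \e^{-nt^2/2}$. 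Since $f\le 1$, this gives $\E f^q\1\{f>s\}\lesssim \int_s^1 q t^{q-1}\e^{-n(t-m)^2/2}\ud t$, which is controlled by $(s^2)^{q/2}\cdot(\twomin{q}{n}/n)$ after the choice of $s$.
\end{itemize}
The case $m\gtrsim s$ is trivial, since then $\|f\|_p\gtrsim s$. Checking that these two regimes patch together with the same exponent $\tau$ is where I expect most of the work to lie.
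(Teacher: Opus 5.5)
\textbf{Lower bound.} Segments cannot be extremal, so the lower-bound half fails. For $f=\min\{\theta_+,R\}$ one has $\|f\|_{L^q}\le\min\{R,\|\theta_+\|_{L^q}\}\lesssim\min\{R,\sqrt{(\twomin{q}{n})/n}\}$. For $p\le n$, testing on the cap $\{\theta\ge c\sqrt{p/n}\}$ gives $\|f\|_{L^p}\gtrsim\min\{R,\sqrt{p/n}\}$. So the ratio is $\lesssim\sqrt{(\twomin{q}{n})/p}$ for \emph{every} $R$. For $p=1$, $q=2$ it stays bounded, whereas the theorem asserts growth of order $(n/\log n)^{1/4}$. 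Your own formula with $R^2\simeq\log(\e n/q)/n$ gives $(\log(\e n/q)/p)^\tau$: the logarithm sits in the numerator and the factor $n$ is lost. No choice of $R$ repairs this; segments (truncated rays) only suffice when $q\gtrsim n$.

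What you are missing is a set whose projection norm has a heavy two-level law: a small floor on most of the sphere and a large value on a cap of small measure. The paper uses $C(a,b)=b\conv(aB^n_2,e_1)$. For $\theta_1\le a$ one has $\Proj{C(a,b)}(\theta)=ab\,\theta$, so the small ball absorbs the equatorial mass. This gives $\|f\|_{L^p}\approx ab$ instead of $\min\{b,\sqrt{p/n}\}$, while $\|f\|_{L^q}\gtrsim b$. For $q\le p\log(\e n/p)$ the paper sends $b\to0$, passing to minimal-norm supporting vectors of $\conv(a_{n,p}B^n_2,e_1)$, whose norm has law $a_{n,p}^p\delta_1+(1-a_{n,p}^p)\delta_{a_{n,p}}$. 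For $p\log(\e n/p)<q<cn$ it takes $a\simeq\sqrt{(p/n)\log(\e n/q)}$ and $b=\sqrt{q/n}$.

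\textbf{Upper bound.} The step ``$[0,y]\subset C$ implies $\|\Proj{C}(x)\|_2\ge\|\Proj{[0,y]}(x)\|_2$'' is false. The norm of the metric projection is not monotone under inclusion, not even up to constants. Take $C=R\conv(aB^n_2,e_1)$ with $0<a<R<1$ and $\theta\in\bfS^{n-1}$ with $\theta_1=a$. Then $\|\Proj{C}(\theta)\|_2=aR$, while $\min\{\theta_1,R\}=a$. If your inequality held, you would get a bounded ratio whenever $R\lesssim\sqrt{p/n}$. That contradicts the lower bound above, whose extremizers have $R\to0$. Your level-$s$ splitting rests on the same unavailable bound $\|f\|_{L^p}\gtrsim\sqrt{(\twomin{p}{n})/n}$. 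Even granting it, the splitting yields $(\log(\e n/q)/p)^\tau$, not the target.

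What is true for $C\subset B^n_2$ is only $\|\Proj{C}(\theta)\|_2\ge\tfrac12h_C(\theta)\ge\tfrac R2\langle\theta,u\rangle_+$. This gives the crude bound $(n/(\twomin{p}{n}))^\tau$ without the logarithm. The key estimate you lack is \Cref{thm:radius-p-moment}: $\|\Proj{C}\|_{L^p(\sigma_{n-1})}\gtrsim\rho L$, with $\rho=\rad_2(C)$ and $L=\sqrt{\tfrac pn\min\{\log\tfrac{\e n}{p},\log\tfrac{\e}{\rho}\}}$. The paper proves it by a dichotomy along meridians $tu+\sqrt{1-t^2}\,v$ through the outradius direction $u$:
\begin{itemize}
\item if $h_C(v)\gtrsim\rho L$, the projection is $\gtrsim h_C(v)$ on the band $|t|\lesssim h_C(v)/\rho$;
\item otherwise, the variational inequality tested against $\rho u$ gives a projection $\gtrsim\rho L/(\rho+L)$ on the cap $\{t\ge L/12\}$, whose measure compensates.
\end{itemize}
Interpolating this against $\|f\|_{L^\infty}=\rho$ settles every case with $\log(\e/\rho)\gtrsim(1-p/q)\log(\e n/q)$. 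L\'evy concentration, $\|f\|_{L^q}\lesssim\sqrt{q/n}+\|f\|_{L^1}$, settles the remaining regime of large $\rho$.
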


\Cref{thm:optimal-reverse-holder-sphere} may be easier to interpret when rewritten equivalently as:
\[
\ExtProbProj{p}{q}{\sigma_{n-1}}
\asymp 
\begin{cases}
     \big(\frac{n}{p \log (\e n/p)}\big)^\tau, & \mbox{if}~~p \in [1, n]~\mbox{and}~q \in (p, p \log \tfrac{\e n}{p}]\\[0.5em] 
     \sqrt{\frac{n}{p \log (\e n/q)}}, & \mbox{if}~p \in [1, n]~\mbox{and}~q \in (p \log \tfrac{\e n}{p}, n] \\  
\sqrt{\frac{n}{p}}, & \mbox{if}~p \in [1, n]~\mbox{and}~q > n \\ 
     1, & \mbox{if}~p \geq n
\end{cases}.
\]

We now present our main result in the Gaussian setting. By taking $C = \R^n$ (or by considering $C = rB^n_2$ with
$r \to \infty$), we see that
no
finite
estimate
can
hold when $q=\infty$ and $p<\infty$. Otherwise, our next result characterizes
the
optimal
reverse Hölder inequalities for projections onto closed convex sets under
Gaussian measure.

\begin{theorem}[Solution to extremal problem in Gauss space]
\label{thm:optimal-reverse-holder-Gauss}
For $n \geq 1$ and $1 \leq p < q < \infty$, it holds that
\[
\ExtProbProj{p}{q}{\gamma_n}\simeq
\sqrt{\frac{\twomax{q}{n}}{\twomax{p}{n}}}
\bigg(\frac{n}{(\twomin{p}{n})\log(\e \tfrac{n}{\twomin{q}{n}})}\bigg)^\tau,
\quad \mbox{where}~~\tau = \frac{1}{2}\Big(1 -
\frac{p}{q}\Big).
\]
\end{theorem}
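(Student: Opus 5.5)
We reduce the Gaussian problem to the spherical one (\Cref{thm:optimal-reverse-holder-sphere}), using polar coordinates: write $G = R\,\theta$ with $R = \|G\|_2$ and $\theta \sim \sigma_{n-1}$ independent, where $R$ has the $\chi_n$ law. The key structural fact we use is monotonicity in the radius: for $C \in \AllConvSets{n}$ and fixed $\theta$, the map $r \mapsto \|\Proj{C}(r\theta)\|_2$ is nondecreasing, and $r \mapsto \|\Proj{C}(r\theta)\|_2/r$ is nonincreasing. This is because $\Proj{C}(r\theta)/r = \Proj{C/r}(\theta)$ and $C/r$ shrinks as $r$ grows, since $0\in C$. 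Both facts follow from the variational inequality for projections. Hence for $r \le s$,
\[
\tfrac{r}{s}\,\|\Proj{C}(s\theta)\|_2 \;\le\; \|\Proj{C}(r\theta)\|_2 \;\le\; \|\Proj{C}(s\theta)\|_2 .
\]

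\textbf{Upper bound.} Set $F(r) = \|\Proj{C}(r\,\cdot)\|_{L^q(\sigma_{n-1})}$; by the above, $F$ is nondecreasing and $F(r)/r$ is nonincreasing. Then
\[
\|\Proj{C}\|_{L^q(\gamma_n)}^q = \E\, F(R)^q \le F(\sqrt n)^q\, \E \max\{1, R/\sqrt n\}^q .
\]
Moreover $(\E R^q)^{1/q} \simeq \sqrt{\twomax{q}{n}}$, so the $L^q$ norm is at most $F(\sqrt n)\sqrt{\twomax{q}{n}/n}$. For the lower bound on the $L^p$ norm, restrict to the event $R \ge c\sqrt n$, which has probability at least $1/2$ for a suitable absolute $c$. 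On that event, monotonicity and the spherical inequality applied to the set $C/\sqrt n$ (i.e., to $\mathrm{Unif}(\sqrt n\,\bfS^{n-1})$ by the footnote rescaling) give
\[
\|\Proj{C}\|_{L^p(\gamma_n)} \gtrsim \|\Proj{C}(c\sqrt n\,\cdot)\|_{L^p(\sigma)} \ge c\,\|\Proj{C}(\sqrt n\,\cdot)\|_{L^p(\sigma)} \ge c\, F(\sqrt n)/\ExtProbProj{p}{q}{\sigma_{n-1}} .
\]
Here a constant $2^{-1/p}\ge 1/2$ appears from the event. This yields the bound $\sqrt{\twomax{q}{n}/n}\cdot(\mathrm{spherical})$. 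When $p \ge n$ this is off from the claimed $\sqrt{\twomax{q}{n}/\twomax{p}{n}}$, so in that regime we must instead use the upper tail $R \gtrsim \sqrt p$: $\E F(R)^p \ge \E[F(R)^p;\,R \ge \sqrt{p}] $, and by the ratio monotonicity $F(R) \ge (R/\sqrt q')F(\sqrt{q'})$ or $\ge$ when $R$ is large. Concretely, we compare both norms to $F(\sqrt{\twomax{p}{n}})$ and $F(\sqrt{\twomax{q}{n}})$, whose ratio is at most $\sqrt{\twomax{q}{n}/\twomax{p}{n}}$ by the ratio monotonicity. The step I expect to be most delicate is this moment bookkeeping: $\E R^q$ is dominated by $R\approx\sqrt{\twomax{q}{n}}$, and one needs $\P(R \ge \sqrt{\twomax{p}{n}})$ raised to the power $1/p$ to be bounded below by a constant, which holds by standard $\chi$ tail estimates.

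\textbf{Lower bound.} We use two families of examples and take the maximum. First, $C = rB_2^n$ with $r$ large relative to $\sqrt n$ (or $C=\R^n$ truncated), giving the ratio $\|G\|_{L^q}/\|G\|_{L^p} \simeq \sqrt{\twomax{q}{n}/\twomax{p}{n}}$. Second, the spherical extremizers dilated: take the near-extremal set $K$ for $\sigma_{n-1}$ and use $C = \sqrt n K$ intersected with a large ball, or rather a cone-like modification so that $\|\Proj{C}(R\theta)\|_2 \approx R\,g(\theta)$. If the spherical extremizers can be taken as cones (which I expect, given the notation $\ExtProbProjCone{\cdot}{\cdot}{\cdot}$), then $\Proj{C}$ is $1$-homogeneous and the Gaussian ratio factors exactly as
\[
\frac{\|R\|_{L^q}}{\|R\|_{L^p}}\cdot \frac{\|g\|_{L^q(\sigma)}}{\|g\|_{L^p(\sigma)}} ,
\]
which gives the full product lower bound at once. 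The main obstacle is verifying that conic sets achieve the spherical extremum up to constants. If that fails, I would fall back on computing the spherical construction directly with Gaussian input, using the sandwich inequality above to transfer between radii near $\sqrt{\twomax{p}{n}}$ and $\sqrt{\twomax{q}{n}}$.
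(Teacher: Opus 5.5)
Your upper bound is essentially the paper's argument: localize each Gaussian $L^r$ norm at radius $\|\chi_n\|_{L^r}\asymp\sqrt{\twomax{r}{n}}$, apply the spherical bound there, and use that $t\mapsto \|\Proj{C}(t\,\cdot)\|_{L^p(\sigma_{n-1})}/t$ is nonincreasing. Keep the $L^p$ and $L^q$ profiles $F_p,F_q$ separate. The chain is then $F_q(\sqrt{\twomax{q}{n}})\le \ExtProbProj{p}{q}{\sigma_{n-1}}\,F_p(\sqrt{\twomax{q}{n}})\le \ExtProbProj{p}{q}{\sigma_{n-1}}\sqrt{(\twomax{q}{n})/(\twomax{p}{n})}\,F_p(\sqrt{\twomax{p}{n}})$.

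The lower bound has a genuine gap. Your main route assumes the spherical near-extremizers can be taken to be cones, and this is false in general when $q\le n$. For a closed convex cone $K$, the map $x\mapsto\|\Proj{K}(x)\|_2=h_{K\cap B_2^n}(x)$ is convex and $1$-homogeneous, so reverse H\"older inequalities are dimension-free. Indeed \Cref{thm:extremal-problem-cones} gives $\ExtProbProjCone{p}{q}{\sigma_{n-1}}\asymp\sqrt{q/p}$ for $q\le n$, whereas, e.g., $\ExtProbProj{1}{2}{\sigma_{n-1}}\asymp (n/(1+\log n))^{1/4}$. The dimension dependence comes from non-conic sets such as $b\conv(aB_2^n,e_1)$. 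For $q\le n$ your fallback does work, and it is the paper's Case 1. There $\|\chi_n\|_{L^p}\asymp\|\chi_n\|_{L^q}\asymp\sqrt n$, so both Gaussian norms of $\Proj{\sqrt n K}$ localize at radius $\sqrt n$ and the ratio matches the spherical one. That suffices, since the Gaussian factor equals $1$ in this regime.

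The proposal fails in the regime $p<n<q$. There the target is $\sqrt{q/n}\,(n/p)^{\tau}=\sqrt{q/p}\,(p/n)^{p/(2q)}\asymp\sqrt{q/p}$. However, $C=\R^n$ gives only $\sqrt{q/n}$, and a transferred non-conic spherical extremizer gives only the spherical value $\asymp\sqrt{n/p}$. Your sandwich inequality bounds $F(\sqrt q)$ above by $\sqrt{q/n}\,F(\sqrt n)$, never below, so it cannot supply the missing factor. Taking the maximum of your two families does not recover the product: for $p=1$, $q=n^2$ you get $\sqrt n$ instead of the required $n$.

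The missing example is the ray $\{\lambda e_1:\lambda\ge 0\}$. For it, $\|\Proj{C}(G)\|_2=(G_1)_+$, and the ratio is $2^{1/p-1/q}\|Z\|_{L^q}/\|Z\|_{L^p}\asymp\sqrt{q/p}$ for all $p<q$. Your cone-factorization idea is therefore right exactly where the product is genuinely needed, since for $q\ge n$ the ray is spherically extremal up to constants. To complete the proof, you must identify the ray explicitly and split into three cases, as the paper does: $q\le n$ (transfer $\sqrt n K$), $p\ge n$ (take $C=\R^n$), and $p<n<q$ (take the ray).
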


\Cref{thm:optimal-reverse-holder-Gauss} may be easier to interpret when rewritten equivalently as:
\[
\ExtProbProj{p}{q}{\gamma_n}
\asymp 
\begin{cases}
    (\frac{n}{p \log (\e n/p)})^\tau, & \mbox{if}~~p \in [1, n]~\mbox{and}~q \in (p, p \log \tfrac{\e n}{p}]\\[0.5em] 
     \sqrt{\frac{n}{p \log (\e n/q)}}, & \mbox{if}~p \in [1, n]~\mbox{and}~q \in (p \log \tfrac{\e n}{p}, n] \\  
\sqrt{\frac{q}{p}}, & \mbox{if}~q > n
\end{cases}.
\]

\medskip 

We emphasize that even for $q$ near
$p$, the optimal reverse Hölder inequality must scale polynomially in the
dimension. Indeed, taking $p = 1$ and $q = 2$, 
by~\Cref{thm:optimal-reverse-holder-sphere,thm:optimal-reverse-holder-Gauss},
there
are
convex
sets
$C \in \AllConvSets{n}$ such that 
\[
\E \|\Proj{C}(\xi)\|_2^2 \asymp \sqrt{\frac{n}{1 + \log n}}
\Big(\E
\|\Proj{C}(\xi)\|_2\Big)^2.
\]
Above, $\xi$ may be drawn either uniformly from the sphere
$\bfS^{n-1}$
or according to the standard Gaussian distribution. 
Note, however, that if we replace the class of sets $\AllConvSets{n}$ by the set of all closed convex \emph{cones} in $\R^n$, 
then dimension-free reverse Hölder inequalities \emph{are} possible; 
see \Cref{remark:cone} and~\Cref{thm:extremal-problem-cones}.
We defer further discussion of~\Cref{thm:optimal-reverse-holder-sphere,thm:optimal-reverse-holder-Gauss} to
\Cref{sec:additional-remarks}, after presenting the proofs below. 

\subsection{Bounds on the expected
norm of metric projections}
\label{sec:bounds-on-the-expected-norm}

Let $\xi \sim \mu$ be a random vector in $\R^n$ and fix a convex set 
$C \in \AllConvSets{n}$. Our focus in this 
section is on upper bounds on the quantity
\[
\|\Proj{C}\|_{L^p(\mu)} = \Big(\E \|\Proj{C}(\xi)\|_{2}^p\Big)^{1/p},
\]
which depend on the pair $(C, \mu)$. Our primary focus is the expected norm itself (\ie $p = 1$), but we comment on some generalizations for $p \geq 1$. 

We begin by characterizing the qualitative behavior of the norm of the projection.
In order to state the result, we need to introduce the concept of \emph{minimal norm supporting vectors}.
For $C \in \AllConvSets{n}$, recall that the support function of $C$ at $x \in \R^n$ is given by 
\[
h_C(x) = \sup_{y \in C} \, \langle x, y \rangle. 
\]
\begin{definition}[Minimal norm supporting vectors]
\label{def:minimal-norm-supp-vec}
For a compact convex set $C \subset \R^n$, the \emph{minimal norm supporting vector in $C$ at $x$} is
given by 
\[
\SuppVec{C}(x) = \argmin_{y \in C} \Big\{\, \|y\|_2 : \, ~\langle x, y\rangle = h_C(x)\, \Big\}.
\]
\end{definition}

These supporting vectors play a key role in the analysis of the expected norm of the metric projection. In the result below, $\cl A$ denotes the closure and $\cone A = \R_+ A$ denotes the conic hull of a set $A \subset \R^n$.

\begin{proposition}[Qualitative behavior of the metric projection]
\label{prop:qualitative-behavior-of-metric-projections}
For every compact $C \in \AllConvSets{n}$ and $p \in [1, \infty]$, the following hold. 
\begin{enumerate}[label=(\roman*)]
\item 
\label{item:cone-upper-bound-and-limit}
If $\|\Proj{\cl \mathbf{cone}~C}\|_{L^p(\mu)} < \infty$, then
\[
\|\Proj{\lambda C}\|_{L^p(\mu)} \leq \|\Proj{\cl \cone C}\|_{L^p(\mu)}, \quad \mbox{for each $\lambda > 0$}.
\]
 Moreover, $
\lim_{\lambda \to \infty} \|\Proj{\lambda C}\|_{L^p(\mu)} =\|\Proj{\cl \cone C}\|_{L^p(\mu)}$. 
\medskip
\item
\label{item:supporting-vector-upper-bound-and-limit} 
For every $\lambda > 0$, 
$\|\Proj{\lambda C}\|_{L^p(\mu)} \leq \lambda \|\SuppVec{C}\|_{L^p(\mu)}$. Moreover, 
\[
\lim_{\lambda \to 0^+} \frac{\|\Proj{\lambda C}\|_{L^p(\mu)}}{\lambda} =
\|\SuppVec{C}\|_{L^p(\mu)}.
\] 
\end{enumerate}
\end{proposition}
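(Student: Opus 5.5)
The plan is to reduce everything to two pointwise monotonicity facts about the map $u \mapsto \Proj{C}(ux)$ for fixed $x \in \R^n$. Then I will identify the pointwise limits and pass to $L^p(\mu)$ by monotone convergence (or directly for $p=\infty$). The starting point is the scaling identity $\Proj{\lambda C}(x) = \lambda\, \Proj{C}(x/\lambda)$. With $u = 1/\lambda$ it gives
\[
\|\Proj{\lambda C}(x)\|_2 = \frac{\|\Proj{C}(ux)\|_2}{u}, \qquad \frac{\|\Proj{\lambda C}(x)\|_2}{\lambda} = \|\Proj{C}(ux)\|_2 .
\]

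\emph{Step 1 (two-sided monotonicity).} Fix $0<s<t$ and set $a = \Proj{C}(sx)$, $b = \Proj{C}(tx)$. I will use the variational inequalities $\langle sx-a, b-a\rangle \le 0$ and $\langle tx - b, a-b\rangle \le 0$. Multiplying the first by $t$, the second by $s$, and adding, the terms involving $x$ cancel, leaving
\[
t\|a\|_2^2 + s\|b\|_2^2 \le (t+s)\langle a,b\rangle \le (t+s)\|a\|_2\|b\|_2 .
\]
This rearranges to $(\|a\|_2-\|b\|_2)(t\|a\|_2 - s\|b\|_2)\le 0$, so $\|a\|_2 \le \|b\|_2 \le (t/s)\|a\|_2$. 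Consequently $u\mapsto \|\Proj{C}(ux)\|_2$ is nondecreasing and $u \mapsto \|\Proj{C}(ux)\|_2/u$ is nonincreasing. Translating back to $\lambda = 1/u$: the map $\lambda \mapsto \|\Proj{\lambda C}(x)\|_2$ is nondecreasing, and $\lambda\mapsto \|\Proj{\lambda C}(x)\|_2/\lambda$ is nonincreasing.

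\emph{Step 2 (pointwise limits).} For (i), the sets $\lambda C$ increase with $\lambda$, since $0\in C$. Their union is $\cone C$, so they converge in the Painlevé--Kuratowski sense to $K=\cl\cone C$, and hence $\Proj{\lambda C}(x)\to \Proj{K}(x)$ for each $x$. By Step 1 this convergence is monotone in norm, so $\|\Proj{\lambda C}(x)\|_2 \le \|\Proj{K}(x)\|_2$ for every $\lambda$.

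For (ii), I will show $\Proj{C}(ux) \to \SuppVec{C}(x)$ as $u\to\infty$. Here $\Proj{C}(ux)$ maximizes $y\mapsto \langle x,y\rangle - \|y\|_2^2/(2u)$ over the compact set $C$. Any limit point $y^\star$ therefore maximizes $\langle x,\cdot\rangle$, so it lies in the face $F=\{y\in C: \langle x,y\rangle = h_C(x)\}$. Comparing with $\SuppVec{C}(x)\in F$ gives $\langle x, \Proj{C}(ux)\rangle - \|\Proj{C}(ux)\|_2^2/(2u) \ge h_C(x) - \|\SuppVec{C}(x)\|_2^2/(2u)$. Since $\langle x,\Proj{C}(ux)\rangle \le h_C(x)$, this yields $\|\Proj{C}(ux)\|_2 \le \|\SuppVec{C}(x)\|_2$ for every $u$. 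Hence every limit point lies in $F$ and has norm at most $\|\SuppVec{C}(x)\|_2$. Because the minimal norm element of the compact convex set $F$ is unique, the limit equals $\SuppVec{C}(x)$. Thus $\|\Proj{\lambda C}(x)\|_2/\lambda \uparrow \|\SuppVec{C}(x)\|_2$ as $\lambda \to 0^+$, with the upper bound holding for every $\lambda$.

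\emph{Step 3 (integration).} The pointwise upper bounds give the two inequalities in (i) and (ii) after taking $L^p(\mu)$ norms. The limits follow from the monotone convergence theorem applied to $\|\Proj{\lambda C}(\xi)\|_2^p$ for $p<\infty$; finiteness of the cone norm is needed only to make the statement meaningful. For $p=\infty$, I will use monotonicity plus pointwise convergence to show that the essential suprema converge.

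The main obstacle I expect is Step 2: the convergence $\Proj{\lambda C}\to \Proj{K}$ for (i), and the identification of the limit as the \emph{minimal norm} supporting vector for (ii). For the former, the cleanest route is to note that $\|\Proj{\lambda C}(x) - x\|_2 = \mathrm{dist}(x,\lambda C)\downarrow \mathrm{dist}(x,K)$. Then bounded minimizing sequences in the increasing convex sets converge to the unique minimizer in $K$, by strong convexity of the squared distance.
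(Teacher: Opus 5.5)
Your proposal is correct and follows essentially the same route as the paper. Both use the scaling identity $\Proj{\lambda C}(x)=\lambda\,\Proj{C}(x/\lambda)$ and the ray-monotonicity lemma, which the paper proves by the same combination of the two variational inequalities. Both identify the pointwise limits in the same way: the paper's convergence of $\sup_{z\in\lambda C}\{2\langle x,z\rangle-\|z\|_2^2\}$ together with $\|u_\lambda-u\|_2^2\le\varphi_x(u)-\varphi_x(u_\lambda)$ is exactly your decreasing-distance argument, and your identification of the limit as $\SuppVec{C}(x)$ mirrors its ray-limit lemma. Both then conclude by monotone convergence, using the same essential-supremum fact for $p=\infty$.
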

 
\begin{definition}[Typical radius]
Given a random vector $\xi \sim \mu$ in $\R^n$ and a convex set $C \in \AllConvSets{n}$, 
the \emph{typical radius} is defined by 
\[
r_\mu(C) = \argmax_{r \geq 0} \bigg\{\, 
\E_\mu h_{C \cap rB^n_2}(\xi) - \frac{r^2}{2} \,\bigg\}.
\]
\end{definition}

Chatterjee~\cite{Cha14} related the tails of the projection to the radius $r_\mu(C)$ in the Gaussian setting $\mu = \gamma_n$ by leveraging concentration of measure in Gauss space.
Our next result establishes a broad generalization of this result: under a minimal assumption on the pair $(C, \mu)$, the typical radius still bounds the typical size of the metric projection.

\begin{proposition}
\label{prop:upper-bound-via-typical-radius}
Fix a set $C \in \AllConvSets{n}$. Then, the following hold.
\begin{enumerate}[label=(\roman*)]
\item     
\label{item:well-defined-fixed-point}
The typical radius $r_\mu(C)$ is well-defined (\ie unique and finite) if and only if $\E_\mu h_{C \cap sB^n_2}(\xi) < \infty$ for some $s > 0$. 
\item 
\label{item:upper-bound-on-mean-norm}
Whenever $r_\mu(C)$ is well-defined, it holds that 
\[
\|\Proj{C}\|_{L^1(\mu)} \leq 2\, r_\mu(C).
\]
\end{enumerate}
\end{proposition}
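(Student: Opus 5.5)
Write $\phi(r) = \E_\mu h_{C\cap rB^n_2}(\xi)$ and $F(r) = \phi(r) - r^2/2$; the plan is to study these two functions directly. For part \ref{item:well-defined-fixed-point}, first observe that $\phi$ is concave on $[0,\infty)$. Indeed, for $r = \theta r_1 + (1-\theta) r_2$ we have $\theta (C\cap r_1B) + (1-\theta)(C \cap r_2 B) \subset C \cap rB$ by convexity of $C$ and of the ball. Since support functions are additive under Minkowski sums, this gives $h_{C\cap rB} \ge \theta h_{C\cap r_1 B} + (1-\theta) h_{C\cap r_2B}$ pointwise, and taking expectations yields concavity of $\phi$ (as a possibly $+\infty$-valued function).

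Next I would show that finiteness at one $s>0$ propagates to all $r$. Since $0 \in C$, we have $C\cap rB \subset (r/s)(C \cap sB)$ when $r \ge s$, because $y \in C$ with $\|y\|\le r$ gives $(s/r)y \in C\cap sB$. Hence $\phi(r) \le (r/s)\phi(s)$, so $\phi$ is finite everywhere and grows at most linearly. Moreover $h \ge 0$ because $0\in C$, so $\phi \ge 0$. Consequently $F$ is strictly concave, continuous on $(0,\infty)$ (concave and finite), upper semicontinuous at $0$ with $F(0)=0$, and satisfies $F(r) \to -\infty$ as $r\to\infty$. A maximizer therefore exists, and strict concavity makes it unique. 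For the converse, if $\phi(s) = \infty$ for every $s>0$, then $F \equiv +\infty$ on $(0,\infty)$, so the argmax is not a well-defined finite point. I expect this degenerate case to need only a sentence, given the convention that $r_\mu(C)$ must be unique and finite.

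For part \ref{item:upper-bound-on-mean-norm}, set $r^\star = r_\mu(C)$ and $K = C\cap r^\star B$. The key pointwise inequality I would use is the following: for $x\in\R^n$ and $y = \Proj{C}(x)$ with $t = \|y\|_2 > r^\star$, the segment from $0$ to $y$ lies in $C$. The function $u\mapsto \langle x,u\rangle - \|u\|^2/2$ is concave along this segment and is maximized over $C$ at $y$. Its restriction $g(\lambda) = \lambda\langle x,y\rangle - \lambda^2 t^2/2$ is therefore increasing on $[0,1]$, so $\langle x,y\rangle \ge t^2$.

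I would then compare with the point $(r^\star/t) y\in K$. This gives
\[
h_{C\cap tB}(x) - h_K(x) \ \ge\ \langle x,y\rangle\Big(1-\frac{r^\star}{t}\Big) \ \ge\ t(t-r^\star).
\]
This is not directly integrable, since $t$ varies with $x$. A cleaner route is to fix a deterministic radius $R > r^\star$ and use optimality, $F(R) \le F(r^\star)$, that is, $\E[h_{C\cap RB} - h_K] \le (R^2 - r^{\star2})/2$. Pointwise we have $h_{C\cap RB}(x) - h_K(x) \ge 0$, and on the event $\|\Proj{C}(x)\| \ge R$ the same comparison argument, with $(R/t)y$ and $(r^\star/t)y$, gives $h_{C\cap RB}(x) - h_K(x) \ge (R-r^\star)\langle x, y\rangle/t \ge (R-r^\star) R$. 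Markov's inequality then yields
\[
\P\big(\|\Proj{C}(\xi)\|\ge R\big) \le \frac{R+r^\star}{2R}.
\]
This is too weak to integrate, so it is the main obstacle.

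To overcome it, I would instead work with the derivative of $\phi$. I would use the fact that the right derivative of $r\mapsto h_{C\cap rB}(x)$ at $r^\star$ is at least $\langle x, y\rangle / t \cdot \1\{t > r^\star\}$, which is at least $\|\Proj C(x)\|\1\{\cdot\}$. Together with the first-order optimality condition $\RightDer\phi(r^\star) \le r^\star$, this gives $\E\big[\|\Proj C(\xi)\|\,\1\{\|\Proj C(\xi)\| > r^\star\}\big] \le r^\star$, with monotone convergence justifying the interchange of expectation and right derivative. Adding the trivial contribution $r^\star$ from the event $\{\|\Proj C\|\le r^\star\}$ gives $\|\Proj{C}\|_{L^1(\mu)} \le 2r^\star$.
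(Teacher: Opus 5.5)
\textbf{Verdict: the architecture of your part (ii) matches the paper's proof, but the key pointwise inequality is justified by a reversed inequality and, as stated, is false.}

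\textbf{Part (i).} This is fine, and more self-contained than the paper, which imports it from \cite{PatZhi26}. Your one unjustified claim is continuity of $\phi$ at $0$. It follows from $0\le h_{C\cap rB^n_2}(\xi)\le\min\{r\|\xi\|_2,\,h_{C\cap sB^n_2}(\xi)\}$ for $r\le s$, together with dominated convergence.

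\textbf{Part (ii): where the gap is.} Your final structure is the paper's: $s(x)=\RightDer\psi_x(r^\star)$ with $\psi_x(r)=h_{C\cap rB^n_2}(x)$, the bound $\E s(\xi)\le\RightDer\phi(r^\star)\le r^\star$ by monotone convergence, and a pointwise bound $\|\Proj{C}(x)\|_2\1\{t>r^\star\}\le s(x)$. The problem is how you justify that pointwise bound. Exhibiting $(r^\star/t)y\in K$ shows $h_K(x)\ge(r^\star/t)\langle x,y\rangle$. That is an \emph{upper} bound on $h_{C\cap tB^n_2}(x)-h_K(x)$, not a lower bound, and the same error occurs in your display with $R$. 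So the ``fact'' $\RightDer\psi_x(r^\star)\ge\langle x,y\rangle/t$ is unsupported.

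It is also false in general. Since $\psi_x(0)=0$ and $\psi_x(t)=\langle x,y\rangle$, the quantity $\langle x,y\rangle/t$ is the chord slope of the concave function $\psi_x$ over $[0,t]$. That slope can exceed $\RightDer\psi_x(r^\star)$ when $\psi_x$ bends before $r^\star$. Concretely, take $C=\conv\{0,(1,0),(2,10)\}\subset\R^2$ and $x=(100,0)$. Then $y=(200/101,\,990/101)$, $t=10$, and $\langle x,y\rangle/t\approx19.8$. But at $r^\star=\sqrt{109}/2\approx5.22$ one has $\RightDer\psi_x(r^\star)\approx10.1$.

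\textbf{How to repair it.} You only need $\RightDer\psi_x(r^\star)\ge t$ on $\{t>r^\star\}$, and this comes from the optimality of $t$, not from a comparison point.
\begin{itemize}
\item By \Cref{lem:radius-interpretation-of-projection}, $t$ maximizes $\psi_x(r)-r^2/2$. Hence $\psi_x(t)-\psi_x(r)\ge(t^2-r^2)/2$ for $r<t$, i.e.\ $\mathsf D_-\psi_x(t)\ge t$.
\item Monotonicity of the one-sided derivatives of the concave $\psi_x$ then gives $\RightDer\psi_x(r^\star)\ge\mathsf D_-\psi_x(t)\ge t$.
\item Equivalently, $1$-strong concavity gives $\psi_x(t)-\psi_x(r^\star)\ge t(t-r^\star)$, so the chord over $[r^\star,t]$ has slope at least $t$. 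In particular, the outer inequality in your display is true, just not for the reason you gave.
\end{itemize}
You also need $\RightDer\psi_x(r^\star)\ge0$, which holds because $\psi_x$ is nondecreasing. This is what lets you pass from $\E[s(\xi)\1\{t>r^\star\}]$ to $\E s(\xi)$. With these two points supplied, your argument coincides with the paper's proof.
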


Next, we control the gap between the typical radius and the mean norm of the projection, when the variance of convex, Lipschitz functionals is bounded.

\begin{definition}
\label{def:convex-lipschitz-constant}
Let $\mu$ be a probability measure on $\R^n$. Define the \emph{convex Lipschitz constant of $\mu$} by 
\[
\CvxLip{\mu} = \sup\Big\{\, \sqrt{\Var_\mu(f)} \mid  f \colon \R^n \to \R, ~\text{convex, $1$-Lipschitz}\, \Big\}.
\]
\end{definition}

In~\Cref{def:convex-lipschitz-constant}, the maps are assumed 
$1$-Lipschitz with respect to the Euclidean norm $\|\cdot\|_2$. 
Note that $\CvxLip{\mu} < \infty$ if and only if 
$\E_\mu \|\xi\|_2^2 < \infty$; in particular $\CvxLip{\mu} \lesssim \sqrt{\E_\mu[\|\xi\|_2^2]}$. These quantities can differ in order, \eg when $\mu$ is the uniform measure on the hypercube $\{-1,1\}^n$: 
\[
\CvxLip{\mu} \asymp 1 \ll \sqrt{n} = (\E_\mu \|\xi\|_2^2)^{1/2}.
\]
Above, the first relation follows from Talagrand's convex distance inequality~\cite[Theorem 4.1.1]{Tal95}.

\begin{theorem}
\label{thm:distance-from-typical-radius}
Suppose that $\mu$ is a probability measure on $\R^n$ such that $\CvxLip{\mu} < \infty$. Then, 
\begin{align*}
\Big|\|\Proj{C}\|_{L^1(\mu)} - r_\mu(C) \Big|
&\leq 
\E\Big| \|\Proj{C}(\xi)\|_2 - r_\mu(C)\Big| \\  
&\lesssim 
\min\bigg\{r_\mu(C), \sqrt{\CvxLip{\mu} \, r_\mu(C)}\,\bigg\},
\end{align*}
for every $C \in \AllConvSets{n}$. 
\end{theorem}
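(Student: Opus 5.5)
The first inequality is Jensen's inequality, $|\E Z|\le \E|Z|$, applied to $Z=\|\Proj{C}(\xi)\|_2-r_\mu(C)$. For the second, write $r=r_\mu(C)$ and $c=\CvxLip{\mu}$. The bound $\E|\|\Proj{C}(\xi)\|_2-r|\le \E\|\Proj{C}(\xi)\|_2+r\le 3r$ follows from \Cref{prop:upper-bound-via-typical-radius}\ref{item:upper-bound-on-mean-norm}. It remains to show $\E|\|\Proj{C}(\xi)\|_2-r|\lesssim\sqrt{cr}$, and we may assume $c\le r$, since otherwise $r\le\sqrt{cr}$ and we are already done.

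\emph{Variational description.} For $x\in\R^n$ and $s\ge0$ set
\[
\varphi_x(s)=h_{C\cap sB^n_2}(x)-\tfrac{s^2}{2},\qquad F(s)=\E_\mu\varphi_\xi(s).
\]
Since $\Proj{C}(x)$ maximizes $y\mapsto\langle x,y\rangle-\|y\|_2^2/2$ over $C$, we have $\max_{s}\varphi_x(s)=\max_{y\in C}\{\langle x,y\rangle-\|y\|_2^2/2\}$. Hence $s^\star(x):=\|\Proj{C}(x)\|_2$ is a maximizer of $\varphi_x$.

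The map $s\mapsto h_{C\cap sB^n_2}(x)$ is concave, because $t(C\cap sB^n_2)+(1-t)(C\cap s'B^n_2)\subset C\cap(ts+(1-t)s')B^n_2$. Therefore $\varphi_x$ is $1$-strongly concave, and so is $F$. Since $F$ is maximized at $r$, this gives
\[
F(s)\le F(r)-\tfrac{(s-r)^2}{2}\quad\text{for all } s\ge 0.
\]
Finally, for fixed $s$, the map $x\mapsto h_{C\cap sB^n_2}(x)$ is convex and $s$-Lipschitz. By \Cref{def:convex-lipschitz-constant}, its variance is at most $c^2s^2$.

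\emph{Tail bound via two-point comparison.} Fix $t>0$ and suppose $s^\star(\xi)\ge r+t$. Because $\varphi_\xi$ is concave with maximizer $s^\star\ge r+t$, it is nondecreasing on $[r,s^\star]$, so $\varphi_\xi(r+t)-\varphi_\xi(r)\ge0$. On the other hand, $F(r+t)-F(r)\le -t^2/2$. Consider the centered variable
\[
D_t(\xi)=\big[\varphi_\xi(r+t)-\varphi_\xi(r)\big]-\big[F(r+t)-F(r)\big].
\]
On this event $D_t(\xi)\ge t^2/2$. Moreover $D_t$ is a difference of two centered support functions, so $\Var(D_t)\le 2c^2\big((r+t)^2+r^2\big)$. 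Chebyshev's inequality then gives
\[
\P\big(s^\star-r\ge t\big)\lesssim \frac{c^2(r+t)^2}{t^4}.
\]
The symmetric argument, comparing the points $r-t$ and $r$ for $t\le r$, gives $\P(s^\star\le r-t)\lesssim c^2r^2/t^4$. This lower event is empty when $t>r$.

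\emph{Integration.} Set $t_0=\sqrt{cr}\le r$ and use $\E|s^\star-r|\le t_0+\int_{t_0}^\infty\P(|s^\star-r|\ge t)\,dt$.
\begin{itemize}
\item On $[t_0,r]$ the tail is $\lesssim c^2r^2/t^4$, which integrates to $\lesssim c^2r^2/t_0^3=\sqrt{cr}$.
\item On $[r,\infty)$ only the upper tail remains, and it is $\lesssim c^2t^2/t^4=c^2/t^2$. This integrates to $c^2/r\le\sqrt{cr}$, using $c\le r$.
\end{itemize}
Combining the three contributions yields $\E|s^\star-r|\lesssim\sqrt{cr}$.

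\emph{Main obstacle.} The delicate point is the reduction of the random maximizer to the fixed pair of scales $\{r,r\pm t\}$. The other parts are routine: measurability of $s^\star$ is immediate from continuity of $\Proj{C}$, and the variance of $h_{C\cap sB^n_2}$ is finite because $c<\infty$ implies $\E_\mu\|\xi\|_2^2<\infty$. The reduction itself rests on two facts: that $\varphi_x$ is monotone between $r$ and $s^\star(x)$, and that $F$ is strongly concave with maximizer exactly $r$. The latter uses the well-definedness of $r$ from \Cref{prop:upper-bound-via-typical-radius}\ref{item:well-defined-fixed-point}, which holds since $\E_\mu h_{C\cap sB^n_2}(\xi)\le s\,\E_\mu\|\xi\|_2<\infty$.
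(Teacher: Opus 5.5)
Your proof is correct and follows essentially the same route as the paper. You compare $\varphi_\xi$ at $r_\mu(C)$ and $r_\mu(C)\pm t$ using $1$-strong concavity of the averaged objective, bound the centered difference of the support functions $h_{C\cap sB^n_2}$ in second moment via Chebyshev, integrate the tail, and invoke \Cref{prop:upper-bound-via-typical-radius} for the $\min$ with $r_\mu(C)$. The differences are only cosmetic: the paper states the tail bound for a general Orlicz function (\Cref{lem:tail-bounds-for-projection-norm}) and integrates after rescaling by $\max\{\CvxLip{\mu},\sqrt{\CvxLip{\mu}\, r_\mu(C)}\}$, whereas you specialize to the variance directly and split the integral at $\sqrt{cr}$ and $r$ after reducing to $c\le r$.
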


While the mapping $x \mapsto \|\Proj{C}(x)\|_2$ is $1$-Lipschitz for $C \in \AllConvSets{n}$, it is generally not convex. Nonetheless, we show that it suffices to control the variance of \emph{convex}, Lipschitz maps, rather than \emph{all} Lipschitz maps. The underlying idea is that for $C \in \AllConvSets{n}$, the projection is essentially determined by
\[
x \mapsto h_{C \cap rB^n_2}(x), \quad \mbox{for}~r \geq 0.
\]
In the proof of~\Cref{thm:distance-from-typical-radius}, we establish a more general result (\Cref{lem:tail-bounds-for-projection-norm}), where uniform boundedness of the variance is replaced by uniform boundedness of an Orlicz norm over centered, convex, $1$-Lipschitz functionals. 

\begin{remark}[An improvement of~\Cref{thm:distance-from-typical-radius}]
One may replace $\CvxLip{\mu}$ by the smaller quantity 
\begin{multline*}
\tilde{\CvxLip{\mu}} = \sup\Big\{\sqrt{\Var_\mu(f)} \mid f \colon \R^n \to \R,\text{\small convex}\\\text{\small $1$-Lipschitz, $1$-positively homogeneous}\Big\}.
\end{multline*}
This only improves constants, however, since $\tilde{\CvxLip{\mu}} \asymp \CvxLip{\mu}$, with 
implicit constants independent of the measure $\mu$. 
\end{remark}

\begin{remark}[Sharpness of~\Cref{thm:distance-from-typical-radius}]
The following example shows that \Cref{thm:distance-from-typical-radius}, up to universal constants, cannot be improved uniformly. Put $C(\tau) = \{\lambda e_1 : 0 \leq \lambda \leq \tau\}$ for some $\tau \in (0, 1/2)$. Let $\mu$ denote the uniform measure on the hypercube $\{-1, 1\}^n$. By direct computation, $r_{\mu}(C(\tau))= \tau$, and $\|\Proj{C(\tau)}(x)\|_2 = \tau \1\{x_1 = 1\}$ for $x \in \{-1, 1\}^n$. Hence, 
\begin{multline*}
\Big| \|\Proj{C(\tau)}\|_{L^1(\mu)} - r_\mu(C(\tau))\Big| \\ = \frac{\tau}{2} \asymp
r_\mu(C(\tau)) \asymp  
\min\Big\{r_\mu(C(\tau)), \sqrt{\CvxLip{\mu} r_\mu(C(\tau))}\Big\},
\end{multline*}
as needed. 
\end{remark}

\begin{remark}[$L^p$ estimates for the projection]
We now discuss some bounds on the $L^p$-norm of the projection that one can derive from our results. For simplicity, we focus on the case where $\mu = \gamma_n$; similar results (with the obvious modifications) are possible for the uniform measure on the sphere, or more generally when $\mu$ has bounded Poincaré constant. 

In the Gaussian setting, the Borell-TIS inequality~\cite{SudTsi74,Bor75} implies that
\[
\Big\|\|\Proj{C}\|_2 - \E \|\Proj{C}\|_2\Big\|_{\psi_2(\gamma_n)} \lesssim 1, 
\]
for all $n \geq 1$, and all $C \in \AllConvSets{n}$. 
Equivalently, for all $p \geq 1$, it holds that 
\[
\begin{gathered}
\Big\|\|\Proj{C}\|_2 - \E \|\Proj{C}\|_2\Big\|_{L^p(\gamma_n)} \leq A \sqrt{p}, \quad \mbox{and, hence,}\\ \|\Proj{C}\|_{L^p(\gamma_n)} \leq \|\Proj{C}\|_{L^1(\gamma_n)}
+ A\sqrt{p}.
\end{gathered}
\]
In particular, from~\Cref{thm:distance-from-typical-radius}, since $\CvxLip{\gamma_n} = 1$, we also obtain
\begin{align}
    \Big\|\|\Proj{C}\|_2 &- r_{\gamma_n}(C)\Big\|_{L^p(\gamma_n)} 
    \nonumber \\&\leq 
    \Big\|\|\Proj{C}\|_2 - \E \|\Proj{C}\|_2\Big\|_{L^p(\gamma_n)} + 
    \Big|\|\Proj{C}\|_{L^1(\gamma_n)} - r_{\gamma_n}(C) \Big| 
    \nonumber \\ 
    &\leq A \sqrt{p} + B \min\{r_{\gamma_n}(C), \sqrt{r_{\gamma_n}(C)}\}, \quad \mbox{for all}~p\geq 1.
\label{ineq:Lp-estimates}
\end{align}
In the above discussion, $A, B > 0$ are universal constants. 
\end{remark}

\begin{remark}[Comparison with~\cite{Cha14}]
Chatterjee's paper~\cite{Cha14} focused on the Gaussian setting (\ie $\mu = \gamma_n$). He showed that for any $C \in \AllConvSets{n}$, 
\[
\Big|\|\Proj{C}\|_{L^1(\gamma_n)} - r_{\gamma_n}(C) \Big| \lesssim 
\max\Big\{1, \sqrt{r_{\gamma_n}(C)}\Big\}.
\]
This inequality produces a larger estimate (when $r_{\mu}(C) \ll 1$) than our \Cref{thm:distance-from-typical-radius}, which implies 
\[
\Big|
\|\Proj{C}\|_{L^1(\gamma_n)} 
- r_{\gamma_n}(C) \Big| 
\lesssim 
\min
\Big\{
r_{\gamma_n}(C), 
\sqrt{r_{\gamma_n}(C)}
\Big\}.
\]
The key reason for our improvement is the bound~\Cref{prop:upper-bound-via-typical-radius}, which did not appear in~\cite{Cha14}, even in the Gaussian setting. Our estimates also apply for a wider class of measures. In the Gaussian setting, applying Markov's inequality to the $p$th power and optimizing over the family of $L^p$ estimates~\eqref{ineq:Lp-estimates}, we obtain the following exponential deviation inequality: 
\begin{equation}
\label{ineq:deviation-bound-in-Gauss-space}
\P\bigg\{\Big|\|\Proj{C}(G)\|_2 - r_{\gamma_n}(C)\Big| > t\,\nu(C) \bigg\}
\leq c_1 \exp\Big\{-c_2 \min\{t^2 \nu^2(C), t^4\}\Big\}
\end{equation}
for every $t > 0$ and with $\nu(C) = \min\{r_{\gamma_n}(C), \sqrt{r_{\gamma_n}(C)}\}$. The constants $c_1, c_2 > 0$ are universal. The inequality~\eqref{ineq:deviation-bound-in-Gauss-space} improves the main result (Theorem 1.1) in~\cite{Cha14}. There, the quantity $\nu(C)$ is replaced by the larger $\sqrt{r_{\gamma_n}(C)}$. A further improvement can be derived from the Borell-TIS inequality~\cite{SudTsi74,Bor75}. First, note that the nonexpansiveness of projections yields
\[
\P\bigg\{\Big|\|\Proj{C}(G)\|_2 - \E \|\Proj{C}(G)\|_2\Big| > t \bigg\} \leq 2 \e^{-t^2/2},  
\]
for all $t > 0$. Combined with~\Cref{thm:distance-from-typical-radius}, 
we obtain the inequality
\[
\P\bigg\{\Big|\|\Proj{C}(G)\|_2 - r_{\gamma_n}(C)\Big| > t\, \nu(C) \bigg\} \leq 2 \e^{-c_3 \, t^2\nu^2(C)},
\]
for any $t \geq c_4$; here $c_3, c_4 > 0$ are universal constants.
\end{remark}

We emphasize that the bounds developed above, as well as those from Chatterjee~\cite{Cha14}, are generally \emph{not} reversible. Recall that for a bounded set $T \subset \R^n$, the Euclidean outradius of $T$ is defined as
\[
\rad_2(T) = \sup_{t \in T} \, \|t\|_2.
\]

\begin{remark}[Typical radius does not fully characterize mean projection]
The ``profile'' of 
the typical radius can be compared to the norm of the projection itself along dilates of a given convex set $C \in \AllConvSets{n}$. Throughout, we assume that $(C, \mu)$ is such that $r_\mu(C)$ is well-defined; see~\Cref{prop:upper-bound-via-typical-radius}. 

First, consider the limit $\lambda \to \infty$, and the dilates $\lambda C$. 
Put $K = \cl \cone C$. By a direct calculation,
\[
\lim_{\lambda \to \infty} 
r_\mu(\lambda C) = \E h_{K \cap B^n_2}(\xi) = 
 \|\Proj{K}\|_{L^1(\mu)}  = \lim_{\lambda \to \infty} \|\Proj{\lambda C}\|_{L^1(\mu)},
\]
by \Cref{prop:qualitative-behavior-of-metric-projections}\ref{item:cone-upper-bound-and-limit}. 
Thus, the typical radius is of the correct order for large dilates. 
 
We now consider the limit as $\lambda \to 0^+$. Then, assuming that $(C, \mu)$ is not degenerate\footnote{Here, specifically, we mean that $\E_\mu h_{C \cap s B^n_2}(\xi) < \E _\mu h_C(\xi)$ for any $s \in (0, \rad_2(C))$; this holds in many cases, including Gauss space and the uniform measure on the sphere.}, we have
\[
\lim_{\lambda \to 0^+} \frac{r_\mu(\lambda C)}{\lambda} = \rad_2(C) 
\quad \mbox{while} \quad 
\lim_{\lambda \to 0^+} \frac{\|\Proj{\lambda C}\|_{L^1(\mu)}}{\lambda} = \|\SuppVec{C}\|_{L^1(\mu)}. 
\]
In many situations these quantities \emph{are not} of the same order.
Generally,
\[
\E_{\xi \sim \mu} h_C(\xi/\|\xi\|_2) \leq \|\SuppVec{C}\|_{L^1(\mu)} \leq \rad_2(C).
\]
For instance, if $\mu$ is uniform on $\{-1, 1\}^n$, these bounds imply 
\[
\frac{1}{\sqrt{n}} \rad_2(C) \lesssim \|\SuppVec{C}\|_{L^1(\mu)} \leq \rad_2(C).
\]
These inequalities are sharp in particular examples.
Indeed, fix $r > 0$. The upper bound is attained with $C_r = rB^n_2$, while the lower bound is attained (up to constants) with $C'_r = r \conv(\{e_1\} \cup \tfrac{1}{\sqrt{n}}B^n_2)$. By computation:
\[
\|\SuppVec{C_r}\|_{L^1(\mu)} = \rad_2(C_r) = \rad_2(C'_r) = r,\quad \mbox{while} \quad
\|\SuppVec{C'_r}\|_{L^1(\mu)} \asymp\frac{r}{\sqrt{n}}. 
\]
Generally, for ``pointy'' sets (such as $C'_r$), the typical radius $r_\mu$ can be much larger than the typical size of the metric projection. 
\end{remark}

\subsection{Organization}
We collect preliminaries on projections onto convex sets and
standard estimates and tail bounds in Gauss space and for spherical
measure in~\Cref{sec:preliminaries}. The proofs of the upper bounds for the extremal problems are presented in~\Cref{sec:upper-bounds}. The lower bounds are presented in~\Cref{sec:lower-bounds}. 
Proofs of the results in~\Cref{sec:bounds-on-the-expected-norm} are presented in~\Cref{sec:deferred-proofs-of-upper-bounds-on-expected-norm}.
Additional remarks on our extremal results are made in~\Cref{sec:additional-remarks}.

\subsection*{Acknowledgments} 
We thank Gil Kur and Grigoris Paouris for helpful discussions. 

\section{Preliminaries}
\label{sec:preliminaries}

In this section, we collect some basic properties of projections and basic facts about the uniform measure on the sphere and the standard Gaussian measure that will be used in the sequel.

\subsection{Properties of metric projections onto closed convex sets.}

We collect basic properties of projections onto closed, convex sets.

The next result is standard; see~\cite[Theorem 3.16]{BauCom17}.

\begin{lemma}[Variational characterization of projections]
\label{lem:projection-variational}
Let $C\subset\R^n$ be nonempty, closed, and convex. Then, for $x
\in \R^n$, 
$\Proj{C}(x) = p$ if and only if 
\begin{equation*}
\langle x- p,z-p\rangle \leq 0
\qquad\text{for every $z \in C$}.\end{equation*}
Additionally $\Proj{C}(x)$ uniquely maximizes, over $z \in C$, the mapping
\[
z\mapsto 2\langle x,z\rangle-\|z\|_2^2.
\]
\end{lemma}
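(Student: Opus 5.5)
The first assertion is the classical obtuse-angle characterization, and I would establish it directly from the strict convexity of the squared Euclidean norm. For the forward direction, suppose $p = \Proj{C}(x)$ and fix $z \in C$. For $t \in (0,1]$ the point $p_t = p + t(z-p)$ lies in $C$ by convexity, so minimality gives
\[
\|x - p\|_2^2 \leq \|x - p_t\|_2^2 = \|x-p\|_2^2 - 2t\langle x-p, z-p\rangle + t^2\|z-p\|_2^2.
\]
Cancelling, dividing by $t>0$, and letting $t \to 0^+$ yields $\langle x-p, z-p\rangle \leq 0$.

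For the converse, suppose $p \in C$ satisfies the inequality. For any $z \in C$, expand
\[
\|x-z\|_2^2 = \|x-p\|_2^2 - 2\langle x-p, z-p\rangle + \|z-p\|_2^2 \geq \|x-p\|_2^2 + \|z-p\|_2^2.
\]
Hence $p$ minimizes $\|x-\cdot\|_2$ over $C$, and it is the unique minimizer, since equality forces $z = p$. Existence and uniqueness of $\Proj{C}(x)$ in general, which is implicit in the notation, follow from a standard argument. Existence holds because $C$ is closed and nonempty and the sublevel sets of $z\mapsto\|x-z\|_2$ are compact. Uniqueness holds because $\|x-\cdot\|_2^2$ is strictly convex.

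For the second assertion, write
\[
2\langle x,z\rangle - \|z\|_2^2 = \|x\|_2^2 - \|x-z\|_2^2.
\]
Since $\|x\|_2^2$ does not depend on $z$, maximizing the left side over $z\in C$ is equivalent to minimizing $\|x-z\|_2$ over $C$. Therefore the unique maximizer is exactly $\Proj{C}(x)$, with uniqueness inherited from the previous paragraph.

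No step here is a real obstacle. The only point that needs care is stating existence and uniqueness of the minimizer before using the characterization, and that is routine.
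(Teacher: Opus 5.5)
Your proof is correct and is the standard argument. The paper gives no proof of its own and simply cites \cite[Theorem~3.16]{BauCom17}; your route (perturbing along the segment $p+t(z-p)$ for necessity, expanding $\|x-z\|_2^2$ about $p$ for sufficiency, and using $2\langle x,z\rangle-\|z\|_2^2=\|x\|_2^2-\|x-z\|_2^2$ for the second claim) is the usual textbook proof of that result. You were also right to assume $p\in C$ in the converse: the statement leaves this implicit, and the ``if'' direction fails without it.
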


\Cref{lem:projection-variational} yields the following monotonicity property along rays. 
\begin{lemma}[Monotonicity of projections]
\label{lem:pointwise-monotonicity-proj}
Let $C\in\AllConvSets{n}$ and $\theta\in\bfS^{n-1}$. Then the maps
\[
r \mapsto \|\Proj{C}(r\theta)\|_2,
\qquad
 r\mapsto \frac{\|\Proj{C}(r\theta)\|_2}{r},
\]
are nondecreasing and nonincreasing, respectively, on $(0,\infty)$.
\end{lemma}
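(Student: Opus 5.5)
The plan is to derive both monotonicity claims from the variational inequality in \Cref{lem:projection-variational}, applied at two points on the same ray. Fix $0<r<s$. Write $p=\Proj{C}(r\theta)$ and $q=\Proj{C}(s\theta)$. Since $p,q\in C$, \Cref{lem:projection-variational} gives two inequalities:
\[
\langle r\theta-p,\,q-p\rangle\le 0,\qquad \langle s\theta-q,\,p-q\rangle\le 0.
\]
The goal is to extract $\|p\|_2\le\|q\|_2$ and $\|q\|_2/s\le\|p\|_2/r$ from these, together with a third inequality that uses $0\in C$.

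\textbf{Monotonicity of the norm.} Adding the two inequalities yields the standard monotonicity relation
\[
\|p-q\|_2^2\le \langle s\theta-r\theta,\,q-p\rangle=(s-r)\langle\theta,q-p\rangle,
\]
so $\langle\theta,q\rangle\ge\langle\theta,p\rangle$. This alone does not bound the norms, so I would bring in the origin. Testing $z=0$ in the characterization of $p$ gives $\langle r\theta-p,-p\rangle\le0$, that is, $\|p\|_2^2\le r\langle\theta,p\rangle$; the same argument gives $\|q\|_2^2\le s\langle\theta,q\rangle$.

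A cleaner route is to use the maximization form of \Cref{lem:projection-variational}. Set $\phi_t(z)=2t\langle\theta,z\rangle-\|z\|_2^2$. Optimality of $p$ for $\phi_r$ and of $q$ for $\phi_s$ gives
\[
2r\langle\theta,p\rangle-\|p\|_2^2\ge 2r\langle\theta,q\rangle-\|q\|_2^2,
\qquad
2s\langle\theta,q\rangle-\|q\|_2^2\ge 2s\langle\theta,p\rangle-\|p\|_2^2.
\]
Write $a=\langle\theta,q\rangle-\langle\theta,p\rangle\ge0$ and $b=\|q\|_2^2-\|p\|_2^2$. The two inequalities become $2ra\le b\le 2sa$. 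Since $a\ge0$, it follows that $b\ge0$, which is exactly the first claim.

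\textbf{Monotonicity of the ratio.} For the second claim I would use convexity and $0\in C$ to rescale. The point $(r/s)q$ lies in $C$, and it serves as a competitor for $\phi_r$. By homogeneity, $\phi_r((r/s)q)=(r/s)^2\phi_s(q)$. Likewise $(r/s)p$ competes for $\phi_s$ only if $p$ can be dilated upward, which it cannot, so the argument will instead work with $\tilde q=(r/s)q$.

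The key observation is that for any $z$, $\phi_r((r/s)z)=(r/s)^2\phi_s(z)$. Hence the projection of $r\theta$ onto $(r/s)C$ is $(r/s)q$. Because $C\supseteq(r/s)C$, by convexity and $0\in C$, the claim reduces to a set-monotonicity statement: projecting the same point $x=r\theta$ onto the larger set $C$ gives a norm at least as large as projecting onto the smaller set $(r/s)C$, provided both sets contain $0$ and are nested in this star-shaped way.

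I would prove that reduction as follows. Let $u=\Proj{C}(x)$ and $v=\Proj{(r/s)C}(x)$. Set $\lambda(t)=\|\Proj{tC}(x)\|_2$. Since $\Proj{tC}(x)=t\Proj{C}(x/t)$, we have $\lambda(t)=t\|\Proj{C}(x/t)\|_2$, so the first claim already controls how $\lambda(t)$ behaves in $t$. Concretely, $\lambda(r/s)=(r/s)\|\Proj{C}((s/r)\cdot r\theta)\|_2=(r/s)\|q\|_2$. The desired inequality is $(r/s)\|q\|_2\le\|p\|_2$, which is $\|\Proj{tC}(x)\|_2\le\|\Proj{C}(x)\|_2$ for $t<1$.

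\textbf{Main obstacle.} This last set-monotonicity step is the crux. To handle it, I would apply the maximization inequalities once more, this time to $p$ against the competitor $(r/s)q\in C$, and to $q$ against the competitor $(s/r)p$. The difficulty is that $(s/r)p$ need not lie in $C$. So the argument should instead use $\phi$-optimality of $p$ against $(r/s)q$, together with the relation $\langle\theta,q\rangle\le\|q\|_2^2/(2r)+\dots$ coming from $b\le2sa$. The remaining work is to combine these inequalities algebraically to force $s\|p\|_2\ge r\|q\|_2$, and I expect that bookkeeping to be the main obstacle.
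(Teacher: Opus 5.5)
Your proof of the first claim is correct: the two maximality comparisons give $2ra\le b\le 2sa$ with $s>r$, hence $a\ge 0$ and then $b\ge 0$.

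The second claim, however, is not proved, and the route you outline cannot prove it. The ``reduction'' to $\|\Proj{tC}(x)\|_2\le\|\Proj{C}(x)\|_2$ for $t<1$ is only a restatement, via $\Proj{tC}(x)=t\Proj{C}(x/t)$. The first claim says nothing about $t\mapsto t\|\Proj{C}(x/t)\|_2$, which is an increasing factor times a nonincreasing one. For general nested convex sets containing the origin the inequality is false. Take $D=\conv\{0,(1,1)\}\subset C=\conv\{0,(1,1),(0,\tfrac12)\}$ and $x=(0,1)$. Then $\Proj{D}(x)=(\tfrac12,\tfrac12)$ and $\Proj{C}(x)=(\tfrac15,\tfrac35)$, so the norm drops from $1/\sqrt2$ to $\sqrt{2/5}$.

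More decisively, no bookkeeping with the inequalities you list can close the argument. Work in $\R^2$ with $\theta=e_1$, $r=1$, $s=2$, $p=(0.02,\,0.1)$ and $q=(0.036,\,0.22)$. Then every inequality you propose to use holds:
\begin{itemize}
\item $\phi_1(p)=0.0296$ exceeds $\phi_1(q)=0.022304$, $\phi_1(q/2)=0.023576$ and $\phi_1(0)=0$;
\item $\phi_2(q)=0.094304\ge\phi_2(p)=0.0696$;
\item $\|p\|_2^2=0.0104\le r\langle\theta,p\rangle=0.02$ and $\|q\|_2^2=0.049696\le s\langle\theta,q\rangle=0.072$;
\item $\|p-q\|_2^2=0.014656\le(s-r)\langle\theta,q-p\rangle=0.016$.
\end{itemize}
Yet $\|q\|_2/2\approx 0.1115>\|p\|_2\approx 0.1020$. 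What excludes this configuration is the first-order condition at $p$ tested against $q$: $\langle\theta-p,q-p\rangle=0.00368>0$.

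That is the missing idea. Endpoint comparisons such as $\phi_r(p)\ge\phi_r(q)$ are strictly weaker than the variational inequalities, because $\phi_r(p)-\phi_r(q)=\|p-q\|_2^2-2\langle r\theta-p,q-p\rangle$. In your notation the variational inequalities read $b-2ra\ge\|p-q\|_2^2$ and $2sa-b\ge\|p-q\|_2^2$; the $\|p-q\|_2^2$ terms are exactly what you discarded.

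The two variational inequalities must also be combined with weights $s$ and $r$, not simply added. Multiply $\langle r\theta-p,q-p\rangle\le0$ by $s$ and $\langle s\theta-q,p-q\rangle\le0$ by $r$, then add. Every $\theta$ term cancels and you get
\[
s\|p\|_2^2+r\|q\|_2^2\le(r+s)\langle p,q\rangle\le(r+s)\|p\|_2\|q\|_2,
\quad\text{i.e.}\quad
(\|p\|_2-\|q\|_2)(s\|p\|_2-r\|q\|_2)\le0 .
\]
This forces $\tfrac{r}{s}\|q\|_2\le\|p\|_2\le\|q\|_2$, which gives both monotonicity claims at once, with no rescaling and no use of $0\in C$. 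It is precisely the paper's argument, with $x=s\theta$ and $\lambda=r/s$.
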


\begin{proof}
Fix $x \in \R^n, \lambda \in (0, 1)$. By~\Cref{lem:projection-variational}, 
    \[
    \langle \lambda x - \Proj{C}(\lambda x), 
    \Proj{C}(x) - \Proj{C}(\lambda x)\rangle \leq 0,\quad 
     \langle x - \Proj{C}(x), 
    \Proj{C}(\lambda x) - \Proj{C}(x)\rangle \leq 0.
    \]
    Rearranging the second inequality and then applying the first inequality,
    \begin{multline*}
    \|\Proj{C}(x)\|_2^2  - \langle \Proj{C}(x), \Proj{C}(\lambda x)\rangle
    \\\leq \langle x, \Proj{C}(x) - \Proj{C}(\lambda x)\rangle 
    \leq \frac{1}{\lambda} \Big(\langle \Proj{C}(x), \Proj{C}(\lambda
x)\rangle -\|\Proj{C}(\lambda x)\|_2^2\Big).
    \end{multline*}
    Multiplying by $\lambda > 0$ and rearranging: 
    \begin{multline*}
    \|\Proj{C}(\lambda x)\|_2^2 +\lambda \|\Proj{C}(x)\|_2^2
    \\ \leq
(1+\lambda)
    \langle \Proj{C}(x), \Proj{C}(\lambda x)\rangle
    \leq (1+\lambda) \|\Proj{C}(x)\|_2 \|\Proj{C}(\lambda x)\|_2.
    \end{multline*}
    Solving the inequality $a^2 + \lambda b^2 \leq (1+\lambda) ab$ for 
    $a = \|\Proj{C}(\lambda x)\|_2, b = \|\Proj{C}(x)\|_2$, 
    \[
    \lambda \|\Proj{C}(x)\|_2 \leq \|\Proj{C}(\lambda x)\|_2 \leq
\|\Proj{C}(x)\|_2.    
\]
     Taking, \eg $x = r\theta$, we obtain the claim.
\end{proof}

The next observation is obvious from the definition of metric projections. It implies that on the sphere, we may reduce to the case that $C \subset B^n_2$.

\begin{lemma}
\label{lem:truncation-projection}
Let $C\in\AllConvSets{n}$ and put $K=C\cap B_2^n$. Then
\[
\Proj{C}(\theta)=\Proj{K}(\theta)\qquad \text{for every }\theta\in\bfS^{n-1}.
\]
\end{lemma}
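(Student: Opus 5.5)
The idea is to show that the minimization defining $\Proj{C}(\theta)$ never needs to leave the unit ball, because the origin lies in $C$.

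Fix $\theta \in \bfS^{n-1}$ and put $p = \Proj{C}(\theta)$. The first step is to show that $\|p\|_2 \leq 1$. Since $0 \in C$, optimality of $p$ gives
\[
\|\theta - p\|_2 \leq \|\theta - 0\|_2 = 1.
\]
Squaring yields $1 - 2\langle \theta, p\rangle + \|p\|_2^2 \leq 1$, that is, $\|p\|_2^2 \leq 2\langle\theta,p\rangle$. By Cauchy--Schwarz, $2\langle\theta,p\rangle \leq 2\|p\|_2$, so $\|p\|_2 \leq 2$. This is not yet enough.

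To get the sharper bound $\|p\|_2 \leq 1$, I will use \Cref{lem:projection-variational} with the test point $z = 0 \in C$. It gives $\langle \theta - p, -p\rangle \leq 0$, which rearranges to
\[
\|p\|_2^2 \leq \langle \theta, p\rangle \leq \|p\|_2.
\]
Hence $\|p\|_2 \leq 1$, and so $p \in C \cap B^n_2 = K$.

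The second step concludes. Note that $K$ is nonempty (it contains $0$), closed and convex, so $\Proj{K}(\theta)$ is well defined. Since $K \subset C$, the point $p$ minimizes $\|\theta - y\|_2$ over $C$, and $p$ itself lies in $K$, it also minimizes over $K$. Uniqueness of the projection onto $K$ then gives $\Proj{K}(\theta) = p = \Proj{C}(\theta)$.

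The only nontrivial point is choosing the test point $z = 0$ in the variational inequality, which is what yields the sharp bound $\|p\|_2 \leq 1$. The same argument in fact gives $\|\Proj{C}(x)\|_2 \leq \|x\|_2$ for every $x \in \R^n$.
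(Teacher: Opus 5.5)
Your proof is correct. Testing the variational inequality of \Cref{lem:projection-variational} at $z=0$ gives $\|\Proj{C}(\theta)\|_2 \le \|\theta\|_2 = 1$, so $\Proj{C}(\theta) \in K \subset C$, and uniqueness of the projection onto $K$ finishes the argument; your first step (which only yields $\|p\|_2 \le 2$) can be dropped, and the paper gives no proof at all, calling the lemma obvious, so your argument is a faithful filling-in of that omitted verification.
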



Next, we give the limit of the projection along rays.
Recall~\Cref{def:minimal-norm-supp-vec}.

\begin{lemma}
\label{lem:ray-limit-of-projections}
Suppose $C \subset \R^n$ is a nonempty, compact convex set. Then 
\[
\lim_{\lambda \to \infty} \Pi_C(\lambda x) = \SuppVec{C}(x), \quad \mbox{for any}~x
\in \R^n.
\]    
\end{lemma}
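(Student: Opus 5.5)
We write $p_\lambda = \Proj{C}(\lambda x)$ and use the variational characterization in \Cref{lem:projection-variational}. It says that $p_\lambda$ maximizes $z \mapsto 2\lambda\langle x, z\rangle - \|z\|_2^2$ over $C$. Equivalently, dividing by $2\lambda$, $p_\lambda$ maximizes
\[
\phi_\lambda(z) = \langle x, z\rangle - \frac{1}{2\lambda}\|z\|_2^2 .
\]
As $\lambda \to \infty$ this is a vanishing quadratic perturbation of the linear functional $\langle x,\cdot\rangle$. The expected limit is the maximizer of the linear functional that is selected by the first-order tie-break, namely the one of minimal norm. That is exactly $\SuppVec{C}(x)$. (If $x = 0$, then $p_\lambda = \Proj{C}(0)$ for all $\lambda$, and $\SuppVec{C}(0)$ is the minimal norm point of $C$, so the claim is trivial.) Note that $\SuppVec{C}(x)$ is well defined: the face $F = \{y \in C : \langle x, y\rangle = h_C(x)\}$ is nonempty, compact and convex by compactness of $C$, so it has a unique minimal norm element $s := \SuppVec{C}(x)$.

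The proof then proceeds in three steps.

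\emph{Step one: compactness.} Since $C$ is compact, the family $\{p_\lambda\}$ is bounded. It therefore suffices to show that every subsequential limit $\bar p$ of $p_{\lambda_k}$, with $\lambda_k \to \infty$, equals $s$.

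\emph{Step two: $\bar p \in F$.} Compare $p_\lambda$ with the competitor $s$ via optimality of $p_\lambda$, $\phi_\lambda(p_\lambda) \ge \phi_\lambda(s)$. This gives
\[
\langle x, p_\lambda\rangle \ge h_C(x) - \frac{1}{2\lambda}\big(\|s\|_2^2 - \|p_\lambda\|_2^2\big) \ge h_C(x) - \frac{\rad_2(C)^2}{2\lambda}.
\]
Passing to the limit yields $\langle x, \bar p\rangle \ge h_C(x)$. Since $C$ is closed, $\bar p \in C$, and hence $\bar p \in F$.

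\emph{Step three: $\bar p$ has minimal norm.} We use the same inequality, now keeping the norm terms. Because $\langle x, p_\lambda\rangle \le h_C(x) = \langle x, s\rangle$, optimality gives
\[
\frac{1}{2\lambda}\big(\|s\|_2^2 - \|p_\lambda\|_2^2\big) \ge \langle x, s - p_\lambda\rangle \ge 0,
\]
so $\|p_\lambda\|_2 \le \|s\|_2$ for every $\lambda$. Passing to the limit, $\|\bar p\|_2 \le \|s\|_2$ with $\bar p \in F$. By uniqueness of the minimal norm element of $F$ (strict convexity of $\|\cdot\|_2^2$ on the convex set $F$), $\bar p = s$. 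Since every subsequential limit equals $s$, we conclude $\lim_{\lambda\to\infty} \Proj{C}(\lambda x) = s$.

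The only delicate point is Step three. Naively, one might expect to need a rate for how fast $p_\lambda$ approaches the face $F$ in order to control the norm. The observation that avoids this is that the linear term $\langle x, s - p_\lambda\rangle$ is nonnegative for every $\lambda$, because $s$ maximizes the linear functional over $C$. This yields the uniform bound $\|p_\lambda\|_2 \le \|s\|_2$ directly, with no quantitative estimates required.
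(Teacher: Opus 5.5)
Your proposal is correct and takes essentially the same route as the paper. Both compare $\Proj{C}(\lambda x)$ against the competitor $\SuppVec{C}(x)$ in the variational characterization, which yields both $\langle x, \Proj{C}(\lambda x)\rangle \to h_C(x)$ and the uniform bound $\|\Proj{C}(\lambda x)\|_2 \le \|\SuppVec{C}(x)\|_2$, and both then conclude by compactness and uniqueness of the minimal-norm point of the face.
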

\begin{proof}
Consider the nonempty, compact, convex set 
\[
F_x= \Big\{\, z\in C \mid \langle z,x\rangle=h_C(x) \, \Big\}.
\] 
The point $\SuppVec{C}(x)$ is the (unique) minimal norm element of $F_x$.
By~\Cref{lem:projection-variational}, 
\[
0\leq
2\Big(h_C(\lambda x)-\langle \lambda x, \Proj{C}(\lambda
x)\rangle\Big) 
\leq \|\SuppVec{C}(x)\|_2^2-\|\Proj{C}(\lambda x)\|_2^2
\leq \|\SuppVec{C}(x)\|_2^2,
\]
for every $\lambda > 0$. Consequently, by $1$-positive homogeneity of $h_C$, we
see
that
\[
\lim_{\lambda \to \infty} \langle x, \Proj{C}(\lambda x)\rangle = h_C(x).
\]
Moreover, we have
$\|\Proj{C}(\lambda x) \|_2\leq\|\SuppVec{C}(x)\|_2$ for every $\lambda > 0$. Hence,
any accumulation point 
$p$ of $\{\Proj{C}(\lambda x)\}_{\lambda > 0}$
belongs to $F_x$
and satisfies
$\|p\|_2\leq\|\SuppVec{C}(x)\|_2$, whence $p=\SuppVec{C}(x)$ by the minimality and
uniqueness of $\SuppVec{C}(x)$. The conclusion now follows from the compactness of $C$.
\end{proof}

We use the following variational interpretation of the norm of the projection. Recall that a function $f \colon \R^n \to \R$ is \emph{$1$-strongly concave} if the map
\[
x \mapsto f(x) + \frac{\|x\|_2^2}{2}
\]
is concave.

\begin{lemma}
\label{lem:radius-interpretation-of-projection}
    For any $C \in \AllConvSets{n}$ and any $x \in \R^n$, 
    the map 
    \[
    r \mapsto h_{C \cap r B^n_2}(x) - \frac{r^2}{2}
    \]
    is $1$-strongly concave and uniquely maximized on $\R_+$ at $\|\Proj{C}(x)\|_2$. 
\end{lemma}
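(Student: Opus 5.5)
The plan is to define $\phi(r) = h_{C \cap rB^n_2}(x) - r^2/2$ for $r \geq 0$ and prove the two claims separately: first strong concavity, then identification of the maximizer.

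\textbf{Strong concavity.} It suffices to show that $g(r) = h_{C\cap rB^n_2}(x)$ is concave on $\R_+$, since then $g(r) = \phi(r) + r^2/2$ is concave, which is exactly $1$-strong concavity of $\phi$. Fix $r_0, r_1 \geq 0$, $t\in[0,1]$, and $\epsilon>0$. Pick $y_i \in C \cap r_iB^n_2$ with $\langle x, y_i\rangle \geq g(r_i)-\epsilon$; these exist because $0\in C$, so the sets are nonempty. The convex combination $y_t = (1-t)y_0 + t y_1$ lies in $C$ by convexity. By the triangle inequality, $\|y_t\|_2 \leq (1-t)r_0 + t r_1$. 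Hence
\[
g\bigl((1-t)r_0+tr_1\bigr) \geq \langle x,y_t\rangle \geq (1-t)g(r_0)+tg(r_1)-\epsilon.
\]
Letting $\epsilon\to0$ gives concavity of $g$. Values of $+\infty$ cannot occur, because $g(r)\leq r\|x\|_2$.

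\textbf{Identification of the maximizer.} Put $p=\Proj{C}(x)$ and $\rho=\|p\|_2$. For any $r\geq0$ and any $z\in C\cap rB^n_2$, \Cref{lem:projection-variational} gives
\[
2\langle x,z\rangle - r^2 \leq 2\langle x,z\rangle-\|z\|_2^2 \leq 2\langle x,p\rangle-\rho^2.
\]
Taking the supremum over $z$ yields $\phi(r)\leq \langle x,p\rangle-\rho^2/2$. Conversely, $p\in C\cap\rho B^n_2$, so $\phi(\rho)\geq \langle x,p\rangle - \rho^2/2$. Therefore $\rho$ is a maximizer of $\phi$ on $\R_+$.

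\textbf{Uniqueness.} This follows from strict concavity, since $\phi$ is $1$-strongly concave. If $r\neq\rho$ were also a maximizer, the midpoint would satisfy
\[
\phi\Bigl(\tfrac{r+\rho}{2}\Bigr) \geq \max\phi + \tfrac{(r-\rho)^2}{8},
\]
which is a contradiction.

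The only delicate point is attainment of the supremum in $h_{C\cap rB^n_2}$ when $C$ is unbounded. This is handled above by working with $\epsilon$-approximate maximizers, although $C\cap rB^n_2$ is in fact compact anyway. I expect no serious obstacle.
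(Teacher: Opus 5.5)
Your proof is correct and follows essentially the same route as the paper: you get concavity of $r \mapsto h_{C\cap rB^n_2}(x)$ from convexity of $C$, using a convex-combination argument that is the element-wise version of the paper's inclusion $C\cap(tr+(1-t)s)B^n_2 \supset t(C\cap rB^n_2)+(1-t)(C\cap sB^n_2)$. You then use the variational characterization of $\Proj{C}(x)$ to show that $\|\Proj{C}(x)\|_2$ is a maximizer, with uniqueness from strong concavity, exactly as the paper does (your midpoint computation just makes that last step explicit).
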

\begin{proof}
For $r \geq 0$, define the functions 
\[
\psi_x(r)=h_{C\cap rB_2^n}(x),\qquad 
\phi_x(r)=\psi_x(r)-\frac{r^2}{2},\qquad r\geq0.
\]
First, by the convexity of $C$, 
\[
C \cap (tr + (1-t)s)B^n_2 \supset t(C \cap r B^n_2) + (1-t) (C\cap sB^n_2),
\]
for any $r,s\geq0$, and 
$t\in[0,1]$. Hence, $\psi_x$ is concave and thus 
$\phi_x$ is $1$-strongly concave as 
$\phi_x(r)+r^2/2=\psi_x(r)$.
Now, set 
\[
p=\Proj{C}(x), \quad \mbox{and} \quad 
\rho=\|p\|_2.
\]
By~\Cref{lem:projection-variational}, the point $p$ uniquely maximizes
\[
z\mapsto \langle x,z\rangle-\frac{\|z\|_2^2}{2}
\quad \mbox{over}~z \in C.
\]
For every $r\geq0$,
\begin{multline*}
\phi_x(r)
=\sup_{z\in C\cap rB_2^n}\Big\{\langle x,z\rangle-\frac{r^2}{2}\Big\}
\\\leq
\sup_{z\in C\cap rB_2^n}\bigg\{\langle x,z\rangle-\frac{\|z\|_2^2}{2}\bigg\}
\leq
\langle x,p\rangle-\frac{\|p\|_2^2}{2}.
\end{multline*}
On the other hand, since $p\in C\cap \rho B_2^n$,
\[
\phi_x(\rho)\geq \langle x,p\rangle-\frac{\rho^2}{2}
=\langle x,p\rangle-\frac{\|p\|_2^2}{2}.
\]
Thus $\rho$ maximizes $\phi_x$ on $\R_+$. Since $\phi_x$ is strongly concave,
this maximizer is unique.
\end{proof}

\subsection{Properties of spherical and Gaussian measure}

We recall standard estimates for spherical and Gaussian measure.

The first result follows from Lévy's concentration for Lipschitz
functionals on the sphere (\eg~\cite[Theorem~5.1.3]{Ver26}):
\[
\|f - \E f\|_{\psi_2(\sigma_{n-1})} 
\lesssim \frac{\|f\|_{\rm Lip}}{\sqrt{n}},
\]
for any $f \colon \bfS^{n-1} \to \R$ that is Lipschitz.\footnote{Adjusting 
constants, this holds with either the geodesic or Euclidean metric on
$\bfS^{n-1}$.}

\begin{lemma}
\label{lem:spherical-concentration}
If $g:\bfS^{n-1}\to\R$ is Lipschitz, then, for every $1\leq q\leq n$,
\[
\|g-\E g\|_{L^q(\sigma_{n-1})}\lesssim \|g\|_{\rm Lip}\sqrt{\frac qn}.
\]
\end{lemma}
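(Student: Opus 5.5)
The plan is to pass from the $\psi_2$ bound quoted just before the statement to moment bounds. That bound is Lévy's concentration inequality, which controls $\|g-\E g\|_{\psi_2(\sigma_{n-1})}$ by $\|g\|_{\rm Lip}/\sqrt{n}$. By homogeneity we normalize so that $\|g\|_{\rm Lip} = 1$. (The trivial case, where $g$ is constant, gives zero on both sides.)

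Set $h = g - \E g$. By the definition of the $\psi_2$ norm there is a universal constant $c>0$ with
\[
\sigma_{n-1}\big\{\,|h| > t\,\big\} \leq 2\exp(-c\,n t^2) \qquad \text{for all } t \geq 0 .
\]
We integrate this tail bound via the layer-cake formula:
\[
\E |h|^q = \int_0^\infty q t^{q-1}\, \sigma_{n-1}\{|h|>t\} \ud t \leq 2q \int_0^\infty t^{q-1} \e^{-c n t^2} \ud t = q\,\Gamma(q/2)\,(cn)^{-q/2}.
\]
Stirling's bound $\Gamma(q/2)\leq (q/2)^{q/2}$ (valid for $q \geq 2$; for $1\leq q<2$ the Gamma factor is bounded by a constant) then gives $(\E|h|^q)^{1/q} \lesssim q^{1/q}\sqrt{q/n} \lesssim \sqrt{q/n}$. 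Here we used that $q^{1/q}\leq \e^{1/\e}$ is uniformly bounded.

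The restriction $q\leq n$ is not actually needed for this estimate. It merely reflects the regime of interest: for $q>n$ the trivial bound $|h|\leq 2$ on the sphere is better, since the diameter of $\bfS^{n-1}$ is $2$ and the function is $1$-Lipschitz.

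There is essentially no obstacle. The only care needed is the metric convention, geodesic versus Euclidean, and the footnote already notes that this only affects constants. The Euclidean Lipschitz constant dominates the geodesic one up to a factor of $\pi/2$, so either choice gives the claimed estimate with universal constants.
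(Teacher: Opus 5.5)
Your proposal is correct and takes the same route as the paper. The paper obtains the lemma directly from the $\psi_2$ form of Lévy's concentration inequality, $\|g-\E g\|_{\psi_2(\sigma_{n-1})}\lesssim \|g\|_{\rm Lip}/\sqrt{n}$, leaving implicit the standard conversion $\|X\|_{L^q}\lesssim \sqrt{q}\,\|X\|_{\psi_2}$; your tail-integration computation with the Gamma-function bound carries out that conversion explicitly.
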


We require some estimates for fixed marginals of a
random direction.

\begin{lemma}
\label{lem:coordinate-moments}
For every $n \geq 1$, every $u \in \bfS^{n-1}$
and
every
$1 \leq r \leq\infty$,
\[
\|\langle \theta, u \rangle_+\|_{L^r(\sigma_{n-1})}\asymp \sqrt{\frac{\twomin{r}{n}}{n}}.
\]
\end{lemma}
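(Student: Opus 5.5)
By rotation invariance of $\sigma_{n-1}$ we may take $u=e_1$, so the task is to estimate $\|(\theta_1)_+\|_{L^r(\sigma_{n-1})}$. Since $\theta_1$ is symmetric, $\E(\theta_1)_+^r=\frac12\E|\theta_1|^r$. The plan is to compute $\E|\theta_1|^r$ exactly through the Gaussian representation and then read off the asymptotics.

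\emph{The case $r=\infty$.} Here $\|(\theta_1)_+\|_{L^\infty}=1$, which equals $\sqrt{n/n}$, consistent with the convention $\twomin{r}{n}=n$.

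\emph{The case $r<\infty$.} Write $\theta=G/\|G\|_2$ with $G\sim\gamma_n$. It is standard that $\|G\|_2$ is independent of $\theta$, so
\[
\E|G_1|^r=\E\|G\|_2^r\,\E|\theta_1|^r .
\]
Using the chi moments, this gives
\[
\E|\theta_1|^r=\frac{\E|G_1|^r}{\E\|G\|_2^r}=\frac{\Gamma(\tfrac{r+1}{2})\,\Gamma(\tfrac n2)}{\Gamma(\tfrac12)\,\Gamma(\tfrac{n+r}{2})},
\]
that is, $\E|\theta_1|^r=B(\tfrac{r+1}{2},\tfrac{n-1}{2})/B(\tfrac12,\tfrac{n-1}{2})$. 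For $n=1$ the sphere is $\{\pm1\}$, so $(\theta_1)_+$ takes the value $1$ with probability $1/2$. Its $L^r$ norm is $2^{-1/r}\in[1/2,1]$, which matches $\sqrt{1/1}$. We may therefore assume $n\geq2$.

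It remains to show that $\big(\Gamma(\tfrac{r+1}{2})\Gamma(\tfrac n2)/\Gamma(\tfrac{n+r}{2})\big)^{1/r}\asymp\sqrt{\twomin{r}{n}/n}$. The factors $2^{-1/r}$ and $\Gamma(1/2)^{-1/r}$ lie in $[c,1]$ uniformly for $r\geq1$. The main step is uniform Stirling control, and I would use the bound $\Gamma(x+a)/\Gamma(x)\asymp$ something explicit, via log-convexity or Stirling with uniform error. There are two regimes.

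\emph{Regime $r\leq n$.} Here $\Gamma(\tfrac{r+1}{2})^{1/r}\asymp\sqrt r$, which holds for all $r\geq1$ by Stirling, since $\Gamma(x)^{1/x}\asymp x$ for $x\geq1/2$ up to constants after taking the $(2/r)$-th root appropriately. Also $\big(\Gamma(\tfrac n2)/\Gamma(\tfrac{n+r}{2})\big)^{1/r}\asymp n^{-1/2}$. This follows from Gautschi/log-convexity-type bounds: the ratio lies between $(n/2)^{-r/2}$ and $((n+r)/2)^{-r/2}$ up to constants, and $(n+r)\le 2n$. The product is $\sqrt{r/n}$.

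\emph{Regime $r>n$.} The trivial bound $\|(\theta_1)_+\|_{L^r}\le1$ gives the upper estimate. For the lower estimate, monotonicity of $L^r$ norms in $r$ gives $\|(\theta_1)_+\|_{L^r}\geq\|(\theta_1)_+\|_{L^n}\gtrsim1$ by the previous regime. This matches $\sqrt{\twomin{r}{n}/n}=1$.

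I expect the only delicate point to be keeping the Stirling/Gamma-ratio estimates uniform in both $r$ and $n$, especially near $r\approx n$ and for small $r$. The log-convexity (Wendel/Gautschi) inequalities for $\Gamma(x+s)/\Gamma(x)$, together with the elementary bound $c\,x\le\Gamma(x+1)^{1/x}\le x+1$, should handle this cleanly.
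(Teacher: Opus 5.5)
Your proposal is correct and follows essentially the paper's route: the paper also writes $\theta=G/\|G\|_2$ and uses the independence of $\|G\|_2$ and $\theta$ to get $\|\langle\theta,u\rangle_+\|_{L^r}=\|(G_1)_+\|_{L^r}/\|\chi_n\|_{L^r}$ for $r<\infty$, and treats $r=\infty$ trivially. The only difference is that the paper then quotes $\|G_1\|_{L^r}\asymp\sqrt r$ and $\|\chi_n\|_{L^r}\asymp\sqrt{\twomax{r}{n}}$ (its Lemma~\ref{lem:chi-n-concentration}, proved via $\psi_2$-concentration of $\chi_n$), which covers all finite $r$ at once; you instead bound the explicit Gamma ratio for $r\le n$ and use monotonicity of $L^r$ norms for $r>n$, and both work.
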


\begin{proof}
For $r = \infty$, it clearly holds that $\|\langle \theta, u\rangle_+\|_{L^\infty(\sigma_{n-1})} = 1$, as required. For $1 \leq r < \infty$, integrating a Gaussian random vector $G \sim \gamma_n$ in polar coordinates
and using rotational invariance yields
\[
\|\langle \theta, u \rangle_+\|_{L^r} = 
\frac{\|(G_1)_+\|_{L^r}}{\|\chi_n\|_{L^r}} 
\asymp \frac{\|G_1\|_{L^r}}{\|\chi_n\|_{L^r}} \asymp \sqrt\frac{r}{\twomax{r}{n}}\asymp
\sqrt\frac{\twomin{r}{n}}{n}.\qedhere
\]
\end{proof}

We also require some tail bounds on such marginals. 
Note that the constants given below are not optimized. 

\begin{lemma}
\label{lem:spherical-estimates}
Fix $n \geq 3$. 
Let $T = \langle \theta, u\rangle$ where $\theta \sim \sigma_{n-1}$
and $u \in \bfS^{n-1}$ is fixed. Then: 
\begin{enumerate}[label=(\roman*)]
\item 
\label{ineq:small-ball-inequality}
For $t \in (0, 1)$ it holds that 
\[
\P\{|T| \leq t\} \asymp \min\big\{1, t\sqrt{n}\big\}.
\]
\item 
\label{ineq:lower-bound-on-lower-tail}
For $t \in (0, 1/12)$ it holds that 
\[
\frac{1}{12} \e^{-12nt^2} \leq  \P\{T \geq t\} \leq 12 \e^{-nt^2/12}.
\]
\end{enumerate}
\end{lemma}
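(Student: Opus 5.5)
We work with the exact density of $T=\langle\theta,u\rangle$. For $\theta\sim\sigma_{n-1}$ and $n\geq 3$, the marginal $T$ has density on $(-1,1)$
\[
f_n(s)=c_n(1-s^2)^{(n-3)/2},\qquad c_n=\frac{\Gamma(n/2)}{\sqrt\pi\,\Gamma((n-1)/2)}.
\]
The standard Gamma-ratio bounds (Gautschi/Wendel) give $c_n\asymp\sqrt n$, and more precisely
\[
\sqrt{(n-2)/(2\pi)}\leq c_n\leq\sqrt{n/(2\pi)}.
\]
Both parts of the lemma then reduce to elementary estimates of $\int f_n$ over intervals. We will only track constants loosely, checking that the stated ones ($12$, $1/12$) have room to spare.

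\textbf{Part (i).} The upper bound is immediate: $f_n\leq c_n\lesssim\sqrt n$, so $\P\{|T|\leq t\}\leq 2c_nt\lesssim t\sqrt n$, and trivially $\P\{|T|\leq t\}\leq 1$.

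For the lower bound, first take $t\leq 1/\sqrt n$. On $|s|\leq 1/\sqrt n$ we have $(1-s^2)^{(n-3)/2}\geq(1-1/n)^{n/2}\geq$ an absolute constant (for $n\geq3$). Hence $\P\{|T|\leq t\}\gtrsim c_nt\gtrsim t\sqrt n$.

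If instead $t>1/\sqrt n$, then by monotonicity $\P\{|T|\leq t\}\geq\P\{|T|\leq 1/\sqrt n\}\gtrsim 1$.

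\textbf{Part (ii), upper tail.} Use $1-s^2\leq e^{-s^2}$, so $f_n(s)\leq c_ne^{-(n-3)s^2/2}$, and integrate the Gaussian tail from $t$. For $n$ large this gives roughly $e^{-(n-3)t^2/2}$. Since $nt^2/12\leq (n-3)t^2/2$ once $n\geq 4$, and $e^{3t^2/2}$ is bounded for $t<1/12$, the factor $12$ absorbs the rest.

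A cleaner route, which also settles the small-$n$ cases, is the concentration bound $\P\{T\geq t\}\leq 2e^{-nt^2/2}$ (cap measure, e.g.\ via Lemma~\ref{lem:spherical-concentration} or the classical estimate). This is stronger than $12e^{-nt^2/12}$.

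\textbf{Part (ii), lower tail.} This is the step needing care. Restrict to $s\in[t,t+1/\sqrt n]$. For $t<1/12$ this interval lies in $[0,1/12+1/\sqrt3]\subset[0,0.7]$. On $[0,0.7]$ we have $1-s^2\geq e^{-cs^2}$ with $c=-\log(0.51)/0.49\approx1.4$. Therefore
\[
\P\{T\geq t\}\geq \frac{c_n}{\sqrt n}\exp\Big(-\tfrac{c(n-3)}{2}(t+1/\sqrt n)^2\Big)\gtrsim e^{-c\,nt^2}\,e^{-c},
\]
using $(t+1/\sqrt n)^2\leq 2t^2+2/n$.

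With $c\approx1.4<12$ and $c_n/\sqrt n\geq\sqrt{1/(6\pi)}$ for $n\geq3$, the prefactor $e^{-c}/\sqrt{6\pi}\cdot(\text{const})$ exceeds $1/12$ after the constants are checked; this is the only place where one has to verify numerics rather than argue up to constants. If the bookkeeping turns out tight for $n=3$, we fall back on the exact computation there: $T$ is uniform on $[-1,1]$, so $\P\{T\geq t\}=(1-t)/2\geq 1/12$.
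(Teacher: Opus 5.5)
\textbf{A gap in the lower tail.} Your route is sound, but the one numerical check you deferred in the lower tail is wrong as written.

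With $c=-\log(0.51)/0.49\approx 1.374$ one has $\e^{-c}/\sqrt{6\pi}\approx 0.253\cdot 0.230\approx 0.058<1/12$. So the bound you end up with, $\frac{c_n}{\sqrt n}\e^{-c}\e^{-cnt^2}$ with $c_n/\sqrt n\geq 1/\sqrt{6\pi}$, falls below $\frac1{12}\e^{-12nt^2}$ whenever $nt^2\lesssim 0.03$. This happens for every $n$, so the exact $n=3$ fallback does not rescue it.

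The loss comes from bounding $(n-3)/n\leq 1$. That pairs the worst case of $c_n/\sqrt n$ (at $n=3$) with the worst case of the factor $\e^{-c(n-3)/n}$ (as $n\to\infty$), and the two never occur together. Keep them coupled. Your own chain gives
\[
\P\{T\geq t\}\geq \sqrt{\tfrac{n-2}{2\pi n}}\;\e^{-c(n-3)/n}\,\e^{-cnt^2}.
\]
With $y=1/n\in(0,1/3]$, the prefactor equals $\sqrt{(1-2y)/(2\pi)}\,\e^{-c+3cy}$. Its logarithm has derivative $3c-(1-2y)^{-1}\geq 3(c-1)>0$. Hence the prefactor is at least its $n\to\infty$ limit $\e^{-c}/\sqrt{2\pi}\approx 0.10>1/12$, and $\e^{-cnt^2}\geq\e^{-12nt^2}$ finishes the argument.

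Alternatively, split as the paper does. For $t\leq 1/(3\sqrt n)$, use $\P\{T\geq t\}\geq \frac12-c_nt\geq\frac16$. For $nt^2\geq 1/9$, the slack $12-c$ in the exponent absorbs the constant $0.058$ versus $1/12$.

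\textbf{Comparison with the paper.} With that repair, your argument is correct and genuinely different in part (ii).

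Part (i) is the same computation as the paper's. Its $\sqrt n\int_0^{\arcsin t}\cos^{n-2}$ is your density integral after the substitution $s=\sin\phi$.

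For (ii), the paper imports the two-sided cap estimates of Brieden et al.\ on $[\sqrt{2/n},1/12]$. It then patches $t\leq\sqrt{2/n}$ using $c_n\leq\sqrt n$ and monotonicity of $t\mapsto\P\{T\geq t\}$. You instead use a single window $[t,t+1/\sqrt n]$ with $1-s^2\geq\e^{-cs^2}$ for the lower tail. For the upper tail you use Gaussian domination of the density, which is rigorous once written as $\P\{T\geq t\}\leq\frac12\sqrt{n/(n-3)}\,\e^{-(n-3)t^2/2}$ for $n\geq 4$.

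What each approach buys:
\begin{itemize}
\item Yours is self-contained (Gautschi's inequality plus calculus) and gives a much better exponent than $12$.
\item The cited estimates are sharper, since they carry the $1/(t\sqrt n)$ prefactor, but the lemma does not need that precision.
\end{itemize}

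One small point: \Cref{lem:spherical-concentration} has only implicit constants, so it cannot produce your explicit $2\e^{-nt^2/2}$. Cite the classical cap bound $\sigma_{n-1}\{\theta_1\geq t\}\leq\e^{-nt^2/2}$ instead.
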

\begin{proof}
From~\cite[eqn.~(5.3)]{AubSza17}, we have 
\[
\P\{|T| \leq t\} \asymp \sqrt{n} \int_0^{\sin^{-1}(t)} 
\cos^{n-2}(\theta)\,\ud \theta. 
\]
If $t \leq \sqrt{1/n}$, then using $1 \geq \cos(u) \geq 1 - \tfrac{u^2}{2}$
for all
$u
\geq 0$ and $\sin^{-1}(t) \asymp t$, 
\[
t\sqrt{n} \gtrsim\P\{|T| \leq t\} \gtrsim \sqrt{n} \int_0^{t} \Big(1 -
\frac{\theta^2}{2}\Big)^{n-2} \, \ud \theta \gtrsim t \sqrt{n}. 
\]
Above, we used $(1-\tfrac{\theta^2}{2})^{n-2} \geq
(1-\tfrac{1}{2n})^{n-2}\geq \e^{-1/2}$. Additionally, if $t \geq \sqrt{1/n}$,
then applying the above inequality at $t = 1/\sqrt{n}$, we obtain 
\[
1 \geq \P\{|T| \leq t\} \geq \P\{|T| \leq \sqrt{1/n}\} \asymp 1.
\] 
Combining the cases, we obtain claim~\ref{ineq:small-ball-inequality}.

For the second set of inequalities, we use known bounds
from~\cite{BriGriKanKleLovSim01}. We set  
\[
F_n(t) = \P\{T \geq t\}.
\]
For $n \geq 3$ and $t \in [\sqrt{2/n}, 1]$,
by~\cite[Lemma~2.1(ii)]{BriGriKanKleLovSim01}
\[
F_n(t) \leq \frac{1}{2t \sqrt{n}} (1-t^2)^{(n-1)/2} \leq \frac{1}{2\sqrt{2}}
\e^{-nt^2/4}
\leq C \e^{-nt^2/C},
\]
for $C \geq 4$. Finally, by~\cite[Lemma~2.1(ii)]{BriGriKanKleLovSim01}, we have
for $t \in [\sqrt{2/n}, 1/12]$ that 
\[
F_n(t) \geq \frac{1}{6 t\sqrt{n}} 
(1-t^2)^{(n-1)/2}
\geq \frac{1}{6 t\sqrt{n}} \e^{-nt^2}
\geq \e^{-3nt^2}.
\]
Above, we used $1-x\geq \e^{-2x}$ for $x \in [0, 1/2]$ and $1 \geq
6u\e^{-2u^2}$ for $u \geq \sqrt{2}$. 
It remains to prove the lower bound when $t\leq \sqrt{2/n}$.
Write the density of $T$ as
\[
f_n(s)=c_n(1-s^2)^{(n-3)/2},\qquad 
c_n=\frac{\Gamma(n/2)}{\sqrt{\pi}\Gamma((n-1)/2)}\leq \sqrt n.
\]
If $0\leq t\leq 1/(3\sqrt n)$, then by symmetry
\[
F_n(t)=\frac12-\int_0^t f_n(s)\,ds
\geq \frac12-t\sqrt n
\geq \frac16
\geq \frac1{12}e^{-12nt^2}.
\]
If $1/(3\sqrt n)\leq t\leq \sqrt{2/n}$, then by monotonicity and
\cite[Lemma~2.1(ii)]{BriGriKanKleLovSim01},
\[
F_n(t)\geq F_n(\sqrt{2/n})
\geq \frac{1}{6\sqrt2}(1-2/n)^{(n-1)/2}
\geq \frac{1}{18\sqrt2}
\geq \frac1{12}e^{-4/3}
\geq \frac1{12}e^{-12nt^2}.
\]
Combining all the cases, we obtain
bounds~\ref{ineq:lower-bound-on-lower-tail}.
\end{proof}

One basic consequence of~\Cref{lem:spherical-estimates} is the
following estimate. 
\begin{lemma}
\label{lem:truncated-lp-estimate}
Fix $n \geq 3$. Let $T = \langle \theta, u\rangle$ where 
$\theta \sim \sigma_{n-1}$ and $u \in \bfS^{n-1}$ is fixed. Then for $t
\in (0, 1)$, and $1 \leq p < \infty$, it holds that
\[
\|(T - t)_+\|_{L^p}
\lesssim \frac{p}{n} \frac{1}{t} \e^{-nt^2/(12 p)}.
\]
\end{lemma}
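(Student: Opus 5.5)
Write $F_n(s) = \P\{T \geq s\}$ for $s \in (0,1)$. By the layer-cake formula,
\[
\E (T-t)_+^p = \int_t^1 p (s-t)^{p-1} F_n(s)\,\ud s .
\]
I will bound $F_n(s)$ using the upper tail estimate of \Cref{lem:spherical-estimates}\ref{ineq:lower-bound-on-lower-tail}. That lemma states the bound only for $s < 1/12$, but its proof (via \cite[Lemma~2.1(ii)]{BriGriKanKleLovSim01}) gives $F_n(s) \le 12\e^{-ns^2/12}$ on the whole interval $(0,1)$. For $s \le \sqrt{2/n}$ the bound is trivial, since the right-hand side is at least $12\e^{-1/6} > 1$.

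Next I linearize the exponent around $t$. Writing $s = t+u$, we have $s^2 \ge t^2 + 2tu$, so
\[
F_n(t+u) \le 12\, \e^{-nt^2/12}\, \e^{-ntu/6}.
\]
Inserting this into the layer-cake integral and extending the range to $u \in (0,\infty)$ gives
\[
\E (T-t)_+^p \le 12\, \e^{-nt^2/12} \int_0^\infty p u^{p-1} \e^{-ntu/6}\,\ud u = 12\, \e^{-nt^2/12}\, \Gamma(p+1) \Big(\frac{6}{nt}\Big)^p .
\]
Taking $p$-th roots and using $\Gamma(p+1)^{1/p} \lesssim p$ and $12^{1/p} \le 12$, this yields
\[
\|(T-t)_+\|_{L^p} \lesssim \frac{p}{nt}\, \e^{-nt^2/(12p)} .
\]
This is exactly the claimed bound, so the exponent $nt^2/(12p)$ comes out automatically from the $p$-th root.

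The only point requiring care is the validity of the tail bound on all of $(0,1)$, rather than only on $(0,1/12)$. The cited bound
\[
F_n(s) \le \frac{(1-s^2)^{(n-1)/2}}{2s\sqrt n}
\]
holds for $s \in [\sqrt{2/n}, 1]$ and is at most $\e^{-ns^2/4}/(2\sqrt2)$ there, so it extends directly. If I preferred not to rely on this extension, I would instead use the elementary estimate
\[
F_n(s) \lesssim \sqrt n \int_s^1 (1-v^2)^{(n-3)/2}\,\ud v \lesssim \e^{-c n s^2}
\]
for $n \ge 3$. Any absolute constant $c$ in the exponent suffices, because the constant $12$ appears only inside the exponential, and the target statement has that exact $12$ there. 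The argument therefore needs the tail bound with constant $1/12$ in the exponent, which is what \Cref{lem:spherical-estimates} supplies.
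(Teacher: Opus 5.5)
Your proposal is correct and follows the paper's proof essentially step for step: the layer-cake formula, the bound $\P\{T>u\}\le 12\e^{-nu^2/12}$ on all of $(0,1)$ taken from the proof of \Cref{lem:spherical-estimates}, the linearization $(t+u)^2\ge t^2+2tu$, extension to a Gamma integral over $(0,\infty)$, and $\Gamma(p+1)^{1/p}\lesssim p$. The one slip is your aside that any absolute constant in the tail exponent would do: a constant smaller than $1/12$ would weaken the exponent in the conclusion in a way that $\lesssim$ cannot absorb, as your next sentence in fact concedes.
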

\begin{proof}
We have 
\[
\E (T-t)_+^p = p \int_0^{1-t} s^{p-1} \,\P\{ T > t + s\}
\, \ud s. 
\]  
From the proof of~\Cref{lem:spherical-estimates}, it holds that 
\[
\P\{T > u\} \leq 12 \exp\{-nu^2/12\}, \quad \mbox{for}~u\in(0,1).
\]
Combining the two preceding displays, we obtain:
\begin{align*}
\E (T-t)_+^p
&\leq 12 p \int_0^{1-t} s^{p-1} \, \e^{-n(t+s)^2/12} \, \ud s \\ 
&\leq 12 p \e^{-nt^2/12} \int_0^{\infty} s^{p-1} \e^{-nts/6} \, \ud s = 
12 \Big(\frac{6}{t n}\Big)^p \Gamma(p+1) \e^{-nt^2/12}.
\end{align*}
Above, we used $(t+s)^2 \geq t^2 +2ts$. Taking $p$th roots on both sides, 
\[
\|(T-t)_+\|_{L^p} \leq \frac{72}{tn} (\Gamma(p+1))^{1/p} \e^{-nt^2/(12p)}
\leq \frac{144}{t} \frac{p}{n} \e^{-nt^2/(12p)},
\]
where we used $\Gamma(p+1)
\leq
(2p)^p$ for $p \geq 1$.
\end{proof}

We also need a basic estimate for the $L^p$ norm of a $\chi_n$ variate, which
we will later use to control the norm $\|G\|_2$ when $G \sim \gamma_n$.

\begin{lemma}
\label{lem:chi-n-concentration}
For $n\geq1$ and $1\leq p<\infty$, it holds that 
\[
\|\chi_n\|_{L^p}\asymp \sqrt{\twomax{p}{n}}.
\]
\end{lemma}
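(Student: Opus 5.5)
We need $\|\chi_n\|_{L^p}\asymp\sqrt{p\vee n}$, and I would prove the two directions separately. The plan is to write $\chi_n=\|G\|_2$ with $G\sim\gamma_n$ and combine two ingredients. The first is the trivial monotonicity $\|\chi_n\|_{L^p}\geq \|\chi_n\|_{L^2}=\sqrt n$ for $p\geq 2$, which also gives $\|\chi_n\|_{L^p}\gtrsim\sqrt n$ for $1\le p<2$ once we know $\E\chi_n\gtrsim\sqrt n$. The second is a comparison with a single coordinate for the $\sqrt p$ part.

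\textbf{Lower bound.} For $p\geq 2$ we have $\|\chi_n\|_{L^p}\ge\sqrt n$. For $1\le p<2$ it suffices to bound $\E\chi_n\gtrsim\sqrt n$. One route is Hölder: $n=\E\chi_n^2\le(\E\chi_n)^{2/3}(\E\chi_n^4)^{1/3}$ with $\E\chi_n^4=n^2+2n\le 3n^2$, which gives $\E\chi_n\ge n^{1/2}/\sqrt3$. For the $\sqrt p$ part, use $\chi_n\ge|G_1|$, so that $\|\chi_n\|_{L^p}\ge\|G_1\|_{L^p}\asymp\sqrt p$ by the standard Gaussian moment formula $\|G_1\|_{L^p}=\sqrt2\,(\Gamma((p+1)/2)/\sqrt\pi)^{1/p}$ together with Stirling. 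Taking the maximum of the two bounds gives $\|\chi_n\|_{L^p}\gtrsim\sqrt{p\vee n}$.

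\textbf{Upper bound.} I would use the exact formula $\E\chi_n^p=2^{p/2}\Gamma((n+p)/2)/\Gamma(n/2)$. Then $\Gamma((n+p)/2)/\Gamma(n/2)\le((n+p)/2)^{p/2}$, for instance by log-convexity of $\Gamma$, or more simply via $\Gamma(a+b)/\Gamma(a)\le(a+b)^b$ for $b\ge0$. This yields $\|\chi_n\|_{L^p}\le\sqrt{n+p}\le\sqrt2\sqrt{p\vee n}$.

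Alternatively, one can avoid Gamma functions with the triangle inequality $\|\chi_n\|_{L^p}\le\E\chi_n+\|\chi_n-\E\chi_n\|_{L^p}\le\sqrt n+C\sqrt p$. Here the second step uses Gaussian concentration of the $1$-Lipschitz map $x\mapsto\|x\|_2$, which gives $\psi_2$-norm $O(1)$.

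The only mildly delicate point is the Gamma-ratio inequality $\Gamma(a+b)/\Gamma(a)\le(a+b)^b$ used in the upper bound. I would sidestep it by taking the concentration route, which is standard.
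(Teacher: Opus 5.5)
Your proposal is correct and follows the paper's proof. The lower bound comes from $\|\chi_n\|_{L^p}\geq\max\{\|G_1\|_{L^p},\|\chi_n\|_{L^1}\}$, and the upper bound from the triangle inequality together with $\psi_2$-concentration of the $1$-Lipschitz map $x\mapsto\|x\|_2$, which is your preferred route. Your H\"older argument giving $\|\chi_n\|_{L^1}\geq\sqrt{n/3}$ supplies a step the paper treats as standard. Your Gamma-function alternative is also valid: convexity of $\log\Gamma$ and $(\log\Gamma)'(x)\leq\log x$ give $\Gamma(a+b)/\Gamma(a)\leq(a+b)^b$, hence the explicit bound $\|\chi_n\|_{L^p}\leq\sqrt{n+p}$.
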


\begin{proof}
Since $\|G\|_2 \sim \chi_n$ when $G \sim \gamma_n$, we have 
\[
\|\chi_n\|_{L^p} \geq \max\{\|G_1\|_{L^p(\gamma_n)}, \|\chi_n\|_{L^1}\}
\gtrsim \max\{\sqrt{p}, \sqrt{n}\} = \sqrt{\twomax{p}{n}}.
\]
In the other direction, the triangle inequality yields:
\[
\|\chi_n\|_{L^p} \lesssim  
\E \chi_n + \sqrt{p}\|\chi_n - \E \chi_n\|_{\psi_2} 
\lesssim \sqrt{n} + \sqrt{p} \asymp \sqrt{\twomax{p}{n}}.
\]
Here, we used the estimate (\eg \cite[Theorem 3.1.1]{Ver26}), $\|\chi_n -
\E
\chi_n\|_{\psi_2} \lesssim
1$.
\end{proof}

\section{Proofs of upper bounds}
\label{sec:upper-bounds}

In this section, we give the proofs of the upper bounds for \Cref{thm:optimal-reverse-holder-sphere,thm:optimal-reverse-holder-Gauss}.  
We first state and prove an intermediate result: a lower bound on the $L^p$ norm of projections, which will allow us to control the ratio of $L^q$ to $L^p$ norms from above. 

\subsection{Lower bounds on the $L^p$ norm of projections}
In this section, we establish a lower bound on the 
$L^p(\sigma_{n-1})$
norm
of the projection onto $C \in \AllConvSets{n}$, in terms of its radius. 
\begin{theorem}
\label{thm:radius-p-moment}
For $n \geq 3$, 
for every $C\in\AllConvSets{n}$, $C \neq \{0\}$, 
with $C \subset B^n_2$, it holds that
\[
\|\Proj{C}\|_{L^p(\sigma_{n-1})}
\gtrsim 
\rad_2(C) \sqrt{\frac{p}{n}
\min\left\{\log\frac{\e n}{p},\,\log\frac{\e}{\rad_2(C)}\right\}},
\]
for any $1 \leq p \leq n$. 
\end{theorem}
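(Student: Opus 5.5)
Let $R = \rad_2(C)$ and choose $x_0 \in C$ with $\|x_0\|_2 = R$ and $u = x_0/R$. Convexity and $0 \in C$ give $[0, x_0] \subset C$. The projection onto a convex set containing $x_0$ yields a pointwise lower bound. By \Cref{lem:radius-interpretation-of-projection}, $\|\Proj{C}(\theta)\|_2$ maximizes the $1$-strongly concave map $\phi_\theta(r) = h_{C\cap rB^n_2}(\theta) - r^2/2$. For each $r \in [0,R]$ the point $r u$ lies in $C\cap rB^n_2$, so
\[
\phi_\theta(r) \ge r\langle\theta,u\rangle - \tfrac{r^2}{2}.
\]
Also $\phi_\theta(0)=0$. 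For a $1$-strongly concave function maximized at $\rho$ we have $\phi_\theta(\rho) - \phi_\theta(0) \ge \rho^2/2$. We only need an upper estimate of $\phi_\theta(\rho)$ in terms of $\rho$: $\phi_\theta(\rho) \le \sup_{z\in C\cap \rho B}\langle\theta,z\rangle \le \rho$. Combining, $\rho \ge \phi_\theta(\rho) \ge \max_{r\le R}\{ r T - r^2/2\}$, where $T = \langle\theta,u\rangle$. Taking $r = \min\{R, T_+\}$ gives
\[
\|\Proj{C}(\theta)\|_2 \gtrsim \min\{R, T_+\}\,T_+.
\]
This is the key pointwise inequality: a point $\theta$ with $T\gtrsim t$ has projection of norm at least roughly $t\min\{R,t\}$. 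This is too lossy when $T$ is small, so I refine the choice of $r$. Since $\rho$ maximizes $\phi_\theta$, we have $\rho \ge \|\Proj{[0,x_0]}(\theta)\|_2 = \min\{R,T_+\}$? Not automatically. I would instead use the variational inequality of \Cref{lem:projection-variational} with $z = su$: $\langle \theta - p, su - p\rangle \le 0$ for all $s\in[0,R]$. This gives $s\langle\theta - p,u\rangle \le \langle\theta,p\rangle - \|p\|_2^2 \le \|p\|_2(1-\|p\|_2)$. Taking $s = R$ yields $R(T - \|p\|_2) \le \|p\|_2$, hence
\[
\|p\|_2 \ge \frac{R\,T}{1+R} \gtrsim R\,T_+,
\]
using $R\le 1$. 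This is the clean pointwise bound.

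Next, by Markov/Paley-type reasoning, $\|\Proj{C}\|_{L^p} \gtrsim R\, t\, \P\{T\ge t\}^{1/p}$ for any $t$. \Cref{lem:spherical-estimates}\ref{ineq:lower-bound-on-lower-tail} gives $\P\{T\ge t\}\ge \tfrac1{12}\e^{-12nt^2}$. Choosing $t \asymp \sqrt{(p/n)\log(\e n/p)}$ would produce the $\log(\e n/p)$ term, but the factor $\e^{-12nt^2/p}$ becomes $(p/n)^{c}$, a loss. So the bound $R T_+$ alone only recovers $R\sqrt{p/n}$, via \Cref{lem:coordinate-moments}. The logarithmic gain must come from a better pointwise bound when $T$ is large relative to $R$. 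When $T\ge R$, the projection onto $[0,x_0]$ equals $x_0$ and I expect $\|p\|_2 \gtrsim R$ (indeed $\rho\ge R\,T/(1+R)$ is weak here). To get this, apply the variational inequality more carefully: $R T - R\|p\|_2 \le \|p\|_2 - \|p\|_2^2$ gives $\|p\|_2 \ge RT/(1+R)$. When $R\ll1$ and $T\ge \sqrt{R}$-type thresholds this is still only $RT$. The correct target is $\|p\|_2 \gtrsim R$ once $T \gtrsim R$. This follows from $\phi_\theta(\rho)\ge \phi_\theta(R) \ge RT - R^2/2 \ge RT/2$ together with $\phi_\theta(\rho)\le \rho T'$, where $T'$ is the largest inner product over $C\cap\rho B$. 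Bounding $T'$ is where difficulty lies, so I would combine it with the gradient/strong-concavity bound $\phi_\theta(\rho)\le \rho - \rho^2/2$.

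Finally, I optimize $t$ in $\|\Proj{C}\|_{L^p}\gtrsim R\,t\,\e^{-12nt^2/p}$ against the constraint that the pointwise bound holds at level $t$. This gives $t^2\asymp (p/n)\min\{\log(\e n/p),\log(\e/R)\}$. The second logarithm enters because the improved pointwise bound is only valid when $t\lesssim$ a threshold depending on $R$. I expect that step — obtaining a pointwise lower bound of order $R\cdot(\text{something growing})$ from the single segment $[0,x_0]$ and using the minimum of the two logs correctly — to be the main obstacle. I am uncertain whether a segment suffices or whether the radial monotonicity of \Cref{lem:pointwise-monotonicity-proj} is needed.
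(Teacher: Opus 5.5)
There is a genuine gap. The strongest bound you actually establish is the pointwise estimate $\|\Proj{C}(\theta)\|_2\ge RT/(1+R)$ with $T=\langle\theta,u\rangle$. It integrates only to $R\sqrt{p/n}$, which is just \Cref{lem:support-lower-bound}\ref{item:lower-bound-from-support-function}. Your proposed route to the logarithm cannot work.

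The target ``$\|\Proj{C}(\theta)\|_2\gtrsim R$ once $T\gtrsim R$'' is false. Take $C=R\conv(aB^n_2,e_1)$ with $R<a\ll 1$; the outradius $R$ is attained only at $u=e_1$. For every $\theta$ with $\theta_1\le a$ one has $h_C(\theta)=aR$ and $\Proj{C}(\theta)=aR\,\theta$ (Case I of \Cref{lem:projection-norm-for-C}). So $\|\Proj{C}(\theta)\|_2=aR\ll R$ on all of $\{R<T\le a\}$.

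Choosing $a=T$ shows more. Any bound $\|\Proj{C}(\theta)\|_2\ge f(R,T)$ valid for all admissible $C$ must satisfy $f(R,T)\le RT_+$; for $T\le 0$, use the segment $[0,Ru]$ itself. Hence no pointwise bound depending only on $R$ and $T$ can give more than $R\|T_+\|_{L^p}\asymp R\sqrt{p/n}$. This holds however the bound is derived: from the segment $[0,x_0]$, the strong concavity of $\phi_\theta$, or radial monotonicity. Your closing step, ``optimize $t$ against the constraint that the pointwise bound holds at level $t$,'' therefore has nothing to stand on.

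The missing idea is a second quantity that varies with $\theta$, together with a dichotomy on it.

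\emph{Setup.} Write $\theta=Tu+\sqrt{1-T^2}\,v$, with $v$ uniform on $\bfS^{n-1}\cap u^\perp$ and independent of $T$. It suffices to show that each meridian average $(\E_T\|\Proj{C}(Tu+\sqrt{1-T^2}\,v)\|_2^p)^{1/p}$ is $\gtrsim RL$, uniformly in $v$, where $L=\sqrt{(p/n)\min\{\log(\e n/p),\log(\e/R)\}}$.

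\emph{The sharper variational bound.} In your variational inequality with $z=Ru$, do not bound $\langle\theta,p\rangle\le\|p\|_2$. Instead split it, for $T\ge 0$, as $\langle\theta,p\rangle\le T\|p\|_2+h_C(v)$. This gives $\|p\|_2\ge (RT-h_C(v))_+/(R+T)$, which is \Cref{lem:lower-on-meridian}\ref{item:lower-away-from-zero}. That is $\gtrsim\min\{R,T\}$, the improvement you wanted, but only when $h_C(v)\lesssim RT$.

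\emph{Case $h_C(v)\ge RL/24$.} The projection is large near the equator: $\|p\|_2\ge h_C(\theta)/2\gtrsim h_C(v)$ for $|T|\le h_C(v)/(2R)$. This band has probability to the power $1/p$ that is $\gtrsim 1$, because $L\sqrt n\ge\sqrt p$. So this case already gives $\gtrsim RL$.

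\emph{Case $h_C(v)\le RL/24$.} Restrict to the cap $\{T\ge L/12\}$. There the lower bound above is increasing in $T$, and \Cref{lem:spherical-estimates}\ref{ineq:lower-bound-on-lower-tail} applies. This gives
$\gtrsim \min\{R,L\}\,\P\{T\ge L/12\}^{1/p}\gtrsim \min\{R,L\}\,(p/n+R)^{1/12}\gtrsim\min\{R,L\}\,(R+L)\asymp RL$.

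This last step is where $\log(\e/R)$ genuinely enters. When $R\ge L$, the projection on the cap is only of order $L$. So one needs $\P\{T\ge L/12\}^{1/p}\gtrsim R$, which forces $nL^2/p\lesssim\log(\e/R)$.
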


\begin{remark}[Sharpness of~\Cref{thm:radius-p-moment}]
\Cref{thm:radius-p-moment}
is sharp. Consider
\[
C(a, \rho) = \rho \, \conv(aB^n_2, e_1), \quad \mbox{where}~\rho \in (0, 1),~ a \in (0,\tfrac{1}{12}).
\]
From~\Cref{lem:projection-norm-for-C}, 
\begin{align*}
\|\Proj{C(a, \rho)}\|_{L^p(\sigma_{n-1})} 
&\lesssim a\rho 
+ \|\min\{\rho, 
(\theta_1 - a)_+\}\|_{L^p(\sigma_{n-1})} \\ 
&\lesssim a\rho + \min\Big\{\rho, \frac{p}{n a}\Big\}\, 
\e^{-na^2/(12p)},
\end{align*}
by standard spherical estimates (\eg
\Cref{lem:spherical-estimates}
and 
\Cref{lem:truncated-lp-estimate}). 
Put 
\[
a^\star_{\rho, n,p} = \sqrt{12 \, \frac{p}{n} \min\Big\{\log \frac{\e n}{p}, \log \frac{\e}{\rho}\Big\}}. 
\]
Note $a^\star_{\rho, n,p} \leq \tfrac{1}{12}$ if $1 \leq p \leq c n$ for a sufficiently small $c > 0$. Hence,
\[
\|\Proj{C(a^\star_{\rho, n,p}, \rho)}\|_{L^p(\sigma_{n-1})}  
\lesssim  \rho \, a^\star_{n,p, \rho} + 
\rho\frac{p}{n a^\star_{n,p,\rho}} 
\lesssim \rho \, a^\star_{n,p, \rho}.
\]
Thus, for any $1 \leq p \leq cn$, since $\rad_2(C(a^\star_{\rho, n,p}, \rho)) = \rho$, the estimate in~\Cref{thm:radius-p-moment} is 
unimprovable, apart from constant factors. If $p \geq cn$, then the lower bound~\Cref{thm:radius-p-moment} is 
of order $\rad_2(C)$ and thus unimprovable. 
\end{remark}

The papers of Gromov~\cite{Gro03}, 
Giannopoulos-Milman-Tsolomitis~\cite{GiaMilTso05}, and
Vershynin~\cite{Ver06} established relationships between the existence of 
bounded sections of centrally symmetric convex bodies and their typical (\ie random) sections, using the so-called ``isoperimetry of waists.'' This is connected to projections, but yields a weaker estimate.

\begin{remark}[Interpretation in terms of inequalities for ``waists'']
The waist inequality implies that if a centrally symmetric convex body $K \subset \R^n$ has a $k$-dimensional projection containing the unit ball, then its $\varepsilon$-neighborhood intersects a large portion of the sphere. This is related to projections via:
\[
(K + \varepsilon B^n_2) \cap \bfS^{n-1} \subset \{\theta \in \bfS^{n-1} : \|\Proj{K}(\theta)\|_2 \geq 1 -\varepsilon \},
\]
which holds for any $\varepsilon \in (0, 1)$. If $K = \rho C$ for a centrally symmetric convex body $C \in \AllConvSets{n}$ with $C \subset B^n_2$, by~\cite[Proposition 3.1]{Ver06}, we obtain
\[
\|\Proj{C}\|_{L^p(\sigma_{n-1})} 
\geq \rad_2(C)\, 
(1-\varepsilon) \, \Big[\P_{\theta \sim \sigma_{n-1}}\{\theta_1 > \sqrt{1-\varepsilon^2}\}\Big]^{1/p}, 
\] 
for any $\varepsilon \in (0, 1)$ and $1 \leq p \leq n$. Optimizing the right-hand side yields 
\[
\|\Proj{C}\|_{L^p(\sigma_{n-1})} \gtrsim \rad_2(C) \, \frac{p}{n},
\]
which is weaker than \Cref{thm:radius-p-moment}. 
Our improvement comes from constructing a dichotomy for projections along certain meridians; see the discussion before the proof of \Cref{thm:radius-p-moment} below.
\end{remark}

An initial estimate bounds the projection by the support function. 

\begin{lemma}
\label{lem:support-lower-bound}
For every $C\in\AllConvSets{n}$ with $C \subset B^n_2$, it holds that: 
\begin{enumerate}[label=(\roman*)] 
\item 
\label{item:lower-bound-by-support-function}
for every $\theta \in \bfS^{n-1}$,    
it holds that $\|\Proj{C}(\theta)\|_2 \geq \tfrac{1}{2}
h_C(\theta)$; and, 
\item 
\label{item:lower-bound-from-support-function}
for every $1 \leq p \leq \infty$, 
\[
\|\Proj{C}\|_{L^p(\sigma_{n-1})}
\gtrsim
\rad_2(C) \sqrt{\frac{\twomin{p}{n}}{n}}
\geq \|\Proj{C}\|_{L^\infty(\sigma_{n-1})}\sqrt{\frac{\twomin{p}{n}}{n}}.
\]
\end{enumerate}
\end{lemma}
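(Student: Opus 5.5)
For part (i), fix $\theta \in \bfS^{n-1}$ and put $p = \Proj{C}(\theta)$. By \Cref{lem:projection-variational}, $p$ maximizes $z \mapsto \langle \theta, z\rangle - \tfrac12\|z\|_2^2$ over $C$. Hence, for every $z \in C$,
\[
\langle \theta, p\rangle - \tfrac12\|p\|_2^2 \geq \langle \theta, z\rangle - \tfrac12\|z\|_2^2 \geq \langle \theta, z\rangle - \tfrac12,
\]
using $C \subset B^n_2$. This bound is too weak when $h_C(\theta)$ is small, so I will use the segment $[0,z]$ instead. Since $0 \in C$, we have $tz \in C$ for $t\in[0,1]$. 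The comparison gives
\[
\langle \theta, p\rangle \geq \langle\theta,p\rangle - \tfrac12\|p\|_2^2 \geq \max_{t\in[0,1]} \Big(t\langle\theta,z\rangle - \tfrac{t^2}{2}\|z\|_2^2\Big).
\]
With $a = \langle \theta, z\rangle \geq 0$ and $\|z\|_2\le 1$, taking $t=\min\{1,a/\|z\|_2^2\}$ gives at least $a/2$: if $a\ge \|z\|_2^2$ take $t=1$ and get $a-\|z\|_2^2/2\ge a/2$; otherwise get $a^2/(2\|z\|_2^2)\ge a^2/2$. The second case is weaker, so I will use instead $t=a$, which is admissible since $a\le 1$. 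This yields $a^2 - a^2\|z\|^2/2 \ge a^2/2$. That is still quadratic, so the naive route fails.

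The better route is to use $\|p\|_2 \geq \langle \theta, p\rangle$ directly, together with the monotonicity in $t$. By Cauchy--Schwarz, $\|p\|_2 \ge \langle\theta,p\rangle \ge \langle\theta,p\rangle - \tfrac12\|p\|_2^2$. The obstacle is that the lower bound $a^2/2$ is not linear in $a$. To fix this, I will work directly with the ray monotonicity of \Cref{lem:pointwise-monotonicity-proj}. For $z \in C$ attaining $h_C(\theta)$, consider $\Proj{C}(s\theta)$ for large $s$; by \Cref{lem:ray-limit-of-projections} it tends to $\SuppVec{C}(\theta)$, whose norm is at least $h_C(\theta)$. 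Since $r\mapsto \|\Proj{C}(r\theta)\|_2/r$ is nonincreasing, this only gives $\|\Proj{C}(\theta)\|_2 \ge \|\Proj{C}(s\theta)\|_2/s$, which is again too weak.

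So the correct argument is the variational inequality at $z$. From $\langle \theta - p, z - p\rangle \le 0$ we get $\langle\theta,z\rangle \le \langle \theta,p\rangle + \langle p, z-p\rangle \le \|p\|_2 + \|p\|_2\|z\|_2 + \ldots$. More precisely,
\[
\langle\theta,z\rangle \le \langle\theta,p\rangle + \langle p,z\rangle - \|p\|_2^2 \le \|p\|_2 + \|p\|_2\|z\|_2 \le 2\|p\|_2.
\]
Taking the supremum over $z\in C$ gives $h_C(\theta)\le 2\|\Proj{C}(\theta)\|_2$, which is (i). This settles the main step.

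For part (ii), pick $u\in C$ with $\|u\|_2=\rad_2(C)$ (or nearly so, since $C$ is compact). Then $h_C(\theta)\ge \langle\theta,u\rangle_+$, since $0\in C$. By (i),
\[
\|\Proj{C}\|_{L^p(\sigma_{n-1})} \ge \tfrac12 \rad_2(C)\, \|\langle\theta,u/\|u\|_2\rangle_+\|_{L^p} \asymp \rad_2(C)\sqrt{\tfrac{\twomin{p}{n}}{n}}
\]
by \Cref{lem:coordinate-moments}. The last inequality in (ii) holds because $\|\Proj{C}(\theta)\|_2\le\rad_2(C)$ for all $\theta$.
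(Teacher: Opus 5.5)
Your final argument is correct and is the paper's: the variational inequality at $z\in C$ gives $\langle\theta,z\rangle\le\langle\theta,\Proj{C}(\theta)\rangle+\langle\Proj{C}(\theta),z\rangle-\|\Proj{C}(\theta)\|_2^2\le 2\|\Proj{C}(\theta)\|_2$ by $C\subset B^n_2$, and (ii) then follows from $h_C(\theta)\ge\langle\theta,w\rangle_+$ for a point $w\in C$ with $\|w\|_2=\rad_2(C)$ (using $0\in C$), together with \Cref{lem:coordinate-moments}. In a final write-up, delete the abandoned detours (the segment and ray-limit attempts), and do not use $p$ both for $\Proj{C}(\theta)$ and for the integrability exponent.
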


\begin{proof}
By \Cref{lem:projection-variational}, for
all $z\in C$,
\[
\langle \theta,z\rangle
\leq
\langle\theta,\Proj{C}(\theta)\rangle+
\langle \Proj{C}(\theta),z\rangle-\|\Proj{C}(\theta)\|_2^2
\leq 2\|\Proj{C}(\theta)\|_2,
\]
because $C\subset B_2^n$.  Thus
$h_C(\theta)\leq2\|\Proj{C}(\theta)\|_2$, which
establishes claim~\ref{item:lower-bound-by-support-function}. 
We can pick a direction $u \in
\bfS^{n-1}$
such
that $r u \in C$ where $r = \rad_2(C)$. Since
$0
\in
C$, we have
\[
\|\Proj{C}(\theta)\|_2\geq \frac{1}{2} \, h_C(\theta) 
\geq 
\frac{1}{2} \max\{r \langle \theta, u\rangle, 0 \} 
= \frac{r}{2} \langle\theta, u\rangle_+.
\]
Now, claim~\ref{item:lower-bound-from-support-function} follows
from~\Cref{lem:coordinate-moments}.
\end{proof}

The next lemma shows that convexity forces the projection to be large along the
meridian 
\[
y_s(u, v) = su + \sqrt{1-s^2}\,v 
\quad \mbox{for}~u \in \bfS^{n-1}, v \in \bfS^{n-1} \cap u^\perp, |s| \leq 1.
\]
For $u\in\bfS^{n-1}$, define the radial function of $C$ by
\[
\rho_C(u)=\sup\{r\geq0:ru\in C\}.
\]
\begin{lemma}
\label{lem:lower-on-meridian}
For $n \geq 2$, fix $u, v \in \bfS^{n-1}$ with $v \in u^\perp$. Fix $t
\in [-1,1]$. Then for any $C \in \AllConvSets{n}$ with $C \subset B^n_2$, the following
hold:
\begin{enumerate}[label=(\roman*)]
\item 
\label{item:lower-for-t-near-zero}
if $|t| \leq \tfrac{1}{2} \tfrac{h_C(v)}{\rad_2(C)}$, then
$\|\Proj{C}(y_t(u, v))\|_2 \geq \tfrac{1}{8} h_C(v)$; and, 
\item 
\label{item:lower-away-from-zero}
for any $t \in (0, 1]$, 
\[
\|\Proj{C}(y_t(u,v))\|_2 \geq \frac{(\rho_C(u) t - h_C(v))_+}{\rho_C(u) + t}.
\]   
\end{enumerate}
\end{lemma}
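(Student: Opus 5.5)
The plan is to prove both parts directly from the variational characterization (\Cref{lem:projection-variational}) and the support-function bound of \Cref{lem:support-lower-bound}\ref{item:lower-bound-by-support-function}, with $x = y_t(u,v) = tu + \sqrt{1-t^2}\,v$. Note that $x \in \bfS^{n-1}$ because $u \perp v$. Since $C$ is closed and $C \subset B^n_2$, it is compact. Hence $h_C(v)$ is attained at some $z \in C$, and $\rho_C(u)u \in C$. Also $h_C(v) \geq 0$ because $0 \in C$. No case analysis beyond what is stated seems necessary; the only care needed is in tracking signs.

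\emph{Part \ref{item:lower-for-t-near-zero}.} Pick $z \in C$ with $\langle v, z\rangle = h_C(v)$. Then
\[
h_C(x) \geq \langle x, z\rangle = t\langle u, z\rangle + \sqrt{1-t^2}\, h_C(v) \geq -|t|\,\rad_2(C) + \sqrt{1-t^2}\, h_C(v).
\]
The hypothesis gives $|t|\,\rad_2(C) \leq \tfrac12 h_C(v)$. Moreover $h_C(v) \leq \rad_2(C)$, so $|t| \leq \tfrac12$ and therefore $\sqrt{1-t^2} \geq \tfrac{\sqrt3}{2}$. It follows that $h_C(x) \geq (\tfrac{\sqrt3}{2} - \tfrac12) h_C(v) \geq \tfrac14 h_C(v)$. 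Applying \Cref{lem:support-lower-bound}\ref{item:lower-bound-by-support-function} at $\theta = x$ then gives $\|\Proj{C}(x)\|_2 \geq \tfrac18 h_C(v)$. The degenerate case $\rad_2(C) = 0$ is trivial, since then $h_C(v) = 0$.

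\emph{Part \ref{item:lower-away-from-zero}.} Write $p = \Proj{C}(x)$ and $\rho = \rho_C(u)$, and test \Cref{lem:projection-variational} against $w = \rho u \in C$. From $\langle x - p, w - p\rangle \leq 0$ and Cauchy--Schwarz we get
\[
t\rho = \langle x, w\rangle \leq \langle x, p\rangle + \langle p, w\rangle - \|p\|_2^2 \leq \langle x, p\rangle + \rho\|p\|_2 .
\]
Next, $\langle x, p\rangle = t\langle u,p\rangle + \sqrt{1-t^2}\langle v,p\rangle \leq t\|p\|_2 + h_C(v)$. This uses $p \in C$, $t > 0$, $h_C(v) \geq 0$ and $\sqrt{1-t^2} \leq 1$. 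Combining the two displays gives $t\rho - h_C(v) \leq (\rho + t)\|p\|_2$. Since $\rho + t > 0$, dividing through and using $\|p\|_2 \geq 0$ yields the claimed bound with the positive part.

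The main (mild) obstacle is the sign bookkeeping in part \ref{item:lower-away-from-zero}. It is essential that $h_C(v) \geq 0$, so that the factor $\sqrt{1-t^2}$ can be dropped in the right direction, and that $t > 0$, so that $t\langle u,p\rangle \leq t\|p\|_2$.
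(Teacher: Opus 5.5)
Your proposal is correct and follows essentially the same route as the paper. For (i) you lower-bound $h_C(y_t(u,v))$ using a maximizer of $\langle v,\cdot\rangle$ over $C$ and then invoke \Cref{lem:support-lower-bound}\ref{item:lower-bound-by-support-function}; for (ii) you test the variational inequality against $\rho_C(u)u$ and use $h_C(v)\geq 0$ and $t>0$, exactly as the paper does, apart from the cosmetic constant $\tfrac14$ in place of the paper's $\tfrac13$, which still yields $\tfrac18$.
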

\begin{proof}
Let $w \in C$ be such that $h_C(v) = \langle v, w\rangle$. Then, 
\[
h_C(y_t(u,v)) \geq t \langle u, w\rangle + \sqrt{1-t^2} h_C(v)
\geq \sqrt{1-t^2} h_C(v) - |t| \rad_2(C).
\]    
Now applying the assumption $|t| \leq \tfrac{1}{2} \tfrac{h_C(v)}{\rad_2(C)}$,
which also implies $|t| \leq 1/2$, 
\[
h_C(y_t(u,v)) \geq \Big(\sqrt{\frac{3}{4}} - \half \Big) h_C(v) \geq
\frac{1}{3} h_C(v).
\]
Inequality~\ref{item:lower-for-t-near-zero} then follows 
by
applying~
\Cref{lem:support-lower-bound}\ref{item:lower-bound-by-support-function}.
Now, applying the variational characterization
in~\Cref{lem:projection-variational}, we obtain with $\rho = \rho_C(u)$,
\begin{align*}
0 &\geq \langle y_t(u,v) - \Proj{C}(y_t(u,v)), \rho u -
\Proj{C}(y_t(u,v))\rangle \\
&\geq \rho t  
-\sqrt{1-t^2} \langle v, \Proj{C}(y_t(u,v))\rangle 
-(\rho + t)\langle u, \Proj{C}(y_t(u,v))\rangle\\
&\geq \rho t - h_C(v) - (\rho + t) \|\Proj{C}(y_t(u,v))\|_2. 
\end{align*}
Rearranging, we obtain for $t \geq 0$,  
\[
\|\Proj{C}(y_t(u,v))\|_2 \geq \frac{\rho t - h_C(v)}{\rho + t}.
\]
Since the left-hand side is nonnegative, this implies
inequality~\ref{item:lower-away-from-zero}.
\end{proof}

We now establish~\Cref{thm:radius-p-moment}. The idea of the proof is to construct a dichotomy showing that the projection must be large along the meridian on the sphere that corresponds to the direction achieving the outradius of $C$. Let $u \in \bfS^{n-1}$ achieve the outradius; that is, $\rho_C(u) = \rad_2(C)$, where $\rho_C$ denotes the radial function of $C$. We decompose $\theta \in \bfS^{n-1}$ as
\[
\theta = tu + \sqrt{1-t^2} \, v \quad \mbox{for some}~v \in \bfS^{n-1} \cap u^\perp,~ t \in [-1,1].
\]
We show that either the support function of $v$ is large, and hence there is a ``tube'' around the $u$-equator (\ie an interval of $t$ near $0$) on which the projection is large, or the projection can be a bit smaller but this is compensated for by occurring on a sufficiently large spherical cap. 
\begin{proof}[Proof of~\Cref{thm:radius-p-moment}]
We may assume that $\rad_2(C) > 0$; otherwise the claim is trivial. Let $\rho =
\rad_2(C)$ and fix $u \in \bfS^{n-1}$ such that $\rho_C(u) = \rad_2(C) = \rho$.
Denote by 
\[
L \equiv L(n,p, \rho) = \sqrt{\frac{p}{n} \min\Big\{
\log\frac{\e n}{p}, \log
\frac{\e}{\rho}\Big\}}.
\]
Define for $v \in \bfS^{n-1} \cap u^\perp$, 
\[
m_{C,p}(v) = \Big(\E_{T} \|\Proj{C}(Tu + \sqrt{1-T^2} v)\|_2^p\Big)^{1/p}
\]
where $T$ has the distribution of $\langle \theta, u \rangle$ when $\theta
\sim \sigma_{n-1}$. 
Note that 
\[
\|\Pi_C\|_{L^p(\sigma_{n-1})} = \|m_{C,p}\|_{L^p(\mathrm{Unif}(\bfS^{n-1} \cap u^\perp))}.
\]
Consequently, by the above display, it suffices to show that
\begin{equation}
\label{ineq:meridian-lower-bound}
m_{C,p}(v) \gtrsim \rho L, \quad \mbox{for all}~v \in \bfS^{n-1} \cap u^\perp.
\end{equation}
To that end, we fix $v \in u^\perp$ and denote $T = \langle \theta,
u\rangle$ where $\theta \sim \sigma_{n-1}$. We also denote 
\[
\alpha(v) = \frac{h_C(v)}{\rad_2(C)} = \frac{h_C(v)}{\rho}.
\]

\paragraph{Case 1: $\alpha(v) \geq \tfrac{1}{24} L$}
By~\Cref{lem:lower-on-meridian}\ref{item:lower-for-t-near-zero}, we obtain 
\[
m_{C, p}(v)\geq \frac{1}{8} \rho \alpha(v) \P\{|T| \leq \tfrac{1}{2}
\alpha(v)\}^{1/p} \geq \frac{1}{192} \rho L
\P\{|T| \leq \tfrac{1}{48} L\}^{1/p}. 
\]
Note that $L \leq 1$ by definition; thus
by~\Cref{lem:spherical-estimates}\ref{ineq:small-ball-inequality}, \[
m_{C,p}(v) \gtrsim \rho L \min\{1, (L \sqrt{n})^{1/p}\} = \rho L,
\]
which establishes inequality~\eqref{ineq:meridian-lower-bound} in this case.
Above, we used 
$L\sqrt{n} \geq \sqrt{p} \geq 1$. 

\paragraph{Case 2: $\alpha(v) \leq \tfrac{1}{24} L$} 
Set $t = L/12$. By~\Cref{lem:lower-on-meridian}\ref{item:lower-away-from-zero}, 
\begin{equation}
\label{ineq:lower-bound-on-meridian-two}
m_{C,p}(v) \geq \rho \, 
\frac{(t - \alpha(v))_+}{\rho + t} \P\{T \geq \tfrac{1}{12}L\}^{1/p}
\gtrsim \frac{\rho L}{\rho + L} 
\P\{T \geq \tfrac{1}{12} L\}^{1/p}.
\end{equation}
Then, since $L \leq 1$, by
\Cref{lem:spherical-estimates}\ref{ineq:lower-bound-on-lower-tail},
\begin{multline}
\label{ineq:lower-bound-on-meridian-three}
\P\{T \geq \tfrac{1}{12}L\}^{1/p} \\\gtrsim \exp\Big(-\frac{1}{12}
\min\Big\{\log \frac{\e n}{p}, \log \frac{\e}{\rho}\Big\}\Big)
\asymp  
\Big(\frac{p}{n} + \rho \Big)^{1/12} 
\gtrsim \rho + L.
\end{multline}
Above, the final inequality arises from the numerical inequality
\begin{equation}
\label{ineq:numerical-inequality}
x + \sqrt{y
\min\{\log(\e/y),
\log(\e/x)\}} \lesssim (x+y)^{1/12}, 
\end{equation}
which holds for all $x, y \in (0, 1]$. Indeed, observe that $x \leq x^{1/12} \leq (x+y)^{1/12}$. Additionally,
\[
\sqrt{y
\min\{\log(\e/y),
\log(\e/x)\}} 
\leq y^{1/12} \cdot \sup_{z \in (0, 1]} \sqrt{z^{5/6} \log(\e/z)} \lesssim y^{1/12},
\]
since $z \mapsto z^{5/6} \log (\e/z)$ is bounded on $(0, 1]$. 
Combining both bounds yields~\eqref{ineq:numerical-inequality}.
We applied this inequality with
$x=
\rho,
y
=
p/n$.
Combining inequalities~\eqref{ineq:lower-bound-on-meridian-two}
and~\eqref{ineq:lower-bound-on-meridian-three}
yields~\eqref{ineq:meridian-lower-bound}, as required.
\end{proof}

\subsection{Upper bound for the spherical case}

We are now in a position to prove the upper bound when $\mu = \sigma_{n-1}$.

\begin{proof}[Proof of~\Cref{thm:optimal-reverse-holder-sphere} (upper bound)]
Fix a convex set $C\in\AllConvSets{n}$, with $C\neq\{0\}$. By \Cref{lem:truncation-projection}, without loss of
generality, we have
$C
\subset
B^n_2$.  
Throughout, we denote the exponent
\[
\delta=1-\frac{p}{q} = 2\tau. 
\]

By Hölder interpolation
and the lower bound from
\Cref{lem:support-lower-bound}, we have 
\[
\|\Proj{C}\|_{L^q(\sigma_{n-1})} \leq 
\|\Proj{C}\|_{L^p(\sigma_{n-1})}^{1-\delta}
\|\Proj{C}\|_{L^\infty(\sigma_{n-1})}^\delta
\lesssim \Big(\frac{n}{\twomin{p}{n}}\Big)^{\delta/2}
\|\Proj{C}\|_{L^p(\sigma_{n-1})}.
\] 
If $q\geq n$---including $q=\infty$---the display above implies the desired
upper bound.

Now suppose $q < n$ and that $n \leq N$ for some 
sufficiently large positive integer $N$. 
We have 
\[
1 \leq \log(\e n/\twomin{q}{n})^{\delta/2} \leq \log(\e N).
\] 
Therefore, for a sufficiently large universal
constant, 
\[
\|\Proj{C}\|_{L^q(\sigma_{n-1})} \lesssim 
\Big(\frac{n}{(\twomin{p}{n}) \log(\e \tfrac{n}{\twomin{q}{n}})}\Big)^{\delta/2}
\|\Proj{C}\|_{L^p(\sigma_{n-1})}, 
\]
for $n \leq N$ and $1 \leq p < q < n$, 
which again furnishes the desired upper bound. 

Finally, suppose $1 \leq p < q < n$ and $n > N$.  
Then, $\twomin{p}{n}=p$ and $\twomin{q}{n}=q$.
We now split into cases, depending on the size of the radius $\rho = \rad_2(C)$. 

If $\rho\leq \sqrt{\tfrac{q}{n}}$, then
$\log(\e/\rho)\gtrsim \log(\e n/q)$, while
$\log(\e n/p)\geq \log(\e n/q)$. From \Cref{thm:radius-p-moment}, it holds
that
$\|\Proj{C}\|_{L^p(\sigma_{n-1})}
\gtrsim\rho\sqrt{p\log(\e n/q)/n}$, whence 
\[
\|\Proj{C}\|_{L^q(\sigma_{n-1})} 
\leq \rho^\delta \|\Proj{C}\|_{L^p(\sigma_{n-1})}^{1-\delta} \lesssim  
\left(\frac{n}{p\log(\e n/q)}\right)^{\delta/2} 
\, \|\Proj{C}\|_{L^p(\sigma_{n-1})},
\] 
by Hölder interpolation. 

If $\rho>\sqrt{q/n}$, then $1\leq \log(\e/\rho)\leq \log(\e n/q) \leq
\log(\e n/p)$. Hence
\Cref{thm:radius-p-moment} gives:
\begin{equation}
\label{ineq:upper-interp-log-1}
\|\Proj{C}\|_{L^p(\sigma_{n-1})}
\gtrsim \rho\sqrt{\frac{p}{n}\log \frac{\e}{\rho}}.
\end{equation} 
The same Hölder interpolation argument yields 
\begin{equation}
\label{ineq:upper-interp-log-2}
\|\Proj{C}\|_{L^q(\sigma_{n-1})} 
\lesssim     \left(\frac{n}{p\log(\e/\rho)}\right)^{\delta/2} \,
\|\Proj{C}\|_{L^p(\sigma_{n-1})}.
\end{equation}
On the other hand, since $\theta\mapsto\|\Proj{C}(\theta)\|_2$ is
$1$-Lipschitz, we may apply the Lipschitz concentration inequality on the sphere
(\Cref{lem:spherical-concentration}), which yields:
\begin{equation}
\label{ineq:upper-interp-log-3}
\|\Proj{C}\|_{L^q(\sigma_{n-1})} 
\lesssim \sqrt{\frac{q}{n}} + \|\Proj{C}\|_{L^1(\sigma_{n-1})}
\lesssim \sqrt{\frac{q}{n}}
+\|\Proj{C}\|_{L^p(\sigma_{n-1})}.
\end{equation}
Combining inequalities~\eqref{ineq:upper-interp-log-1} 
and~\eqref{ineq:upper-interp-log-3}, we obtain 
\begin{multline}
\label{ineq:upper-conc-log}
\|\Proj{C}\|_{L^q(\sigma_{n-1})} \lesssim 
\Big(1 + \frac{\sqrt{q}}{\rho \sqrt{p\log(\e/\rho)}}\Big) \,
\|\Proj{C}\|_{L^p(\sigma_{n-1})}  \\
\lesssim 
\frac{\sqrt{q}}{\rho \sqrt{p\log(\e/\rho)}} 
\, \|\Proj{C}\|_{L^p(\sigma_{n-1})}.
\end{multline}
If $\log(\e/\rho) \geq \tfrac{1}{4} \delta \log(\e n/q)$, then
by inequality~\eqref{ineq:upper-interp-log-2}, 
\[
\|\Proj{C}\|_{L^q(\sigma_{n-1})} 
\lesssim     \left(\frac{n}{p\log(\e n/q)}\right)^{\delta/2} \,
\|\Proj{C}\|_{L^p(\sigma_{n-1})},
\] 
where we used that $(c\delta)^{-\delta/2} \lesssim 1$ for $\delta
\in (0, 1]$. Finally, if $\log(\e/\rho) < \tfrac{1}{4}\delta \log(\e n/q)$,
then
by the bound~\eqref{ineq:upper-conc-log}, 
\[
\|\Proj{C}\|_{L^q(\sigma_{n-1})} 
\lesssim \sqrt{A(n,p,q,\rho)} \left(\frac{n}{p\log(\e n/q)}\right)^{\delta/2}
\,
\|\Proj{C}\|_{L^p(\sigma_{n-1})},
\]
where we have defined
\[
A(n,p, q,\rho) = 
\frac{1}{\rho^2 \log(\e/\rho)} 
\frac{q \log^\delta(\e n/q)}{p^{1-\delta} n^\delta}.
\]
It suffices to show that $A(n,p,q,\rho) \lesssim 1$, and then the claim
follows.
Note
that $(q/p)^{p/q} \lesssim 1$ and $\log(\e/\rho) \gtrsim 1$, hence 
\[
A(n, p, q, \rho) \lesssim \frac{1}{\rho^2}
\Big(\frac{q}{n}\Big)^\delta \, \log^{\delta}(\e n/q).
\]
Additionally, $\log(\e/\rho) < \tfrac{1}{4}\delta \log(\e n/q)$ implies
\[
\frac{1}{\rho^2} \leq \Big(\frac{n}{q}\Big)^{\delta/2}
\e^{\delta/2-2} \lesssim \Big(\frac{n}{q}\Big)^{\delta/2}.
\]
Combining the previous two displays,
\[
A(n,p, q, \rho) \lesssim \Big(\frac{\log(\e n/q)}{\sqrt{n/q}}\Big)^\delta 
\leq \sup_{x \geq 1} 
\Big(\frac{\log(\e x)}{\sqrt{x}}\Big)^\delta = 
\Big(\frac{4}{\e}\Big)^{\delta/2} \lesssim 1,
\]
as needed. 
\end{proof}

\subsection{Upper bound in Gauss space.}
We will deduce the Gaussian estimate from the spherical estimate by integrating in
polar coordinates. That is, for $G\sim\gamma_n$, we write
\[
G=R \theta, \quad \mbox{where} \quad 
R \sim \chi_n, \theta = \frac{G}{\|G\|_2} \sim \sigma_{n-1},
\]
and, moreover, $R$ and
$\theta$ are independent.

For $C \in \AllConvSets{n}$ and $1\leq p <\infty$, define 
\[
m_{C,p}(r)= \Big(\E_{\theta \sim \sigma_{n-1}}
\|\Proj{C}(r\theta)\|_2^p\Big)^{1/p}, \quad \mbox{for}~r \geq 0.
\]
By \Cref{lem:pointwise-monotonicity-proj}, for every $1 \leq p <
\infty$, the averaged functions
$m_{C,p}$
and
$r \mapsto m_{C,p}(r)/r$ are nondecreasing and nonincreasing, respectively, for
$r \in (0, \infty)$. For such functions, we can identify the order of the
$L^r(\chi_n)$ norm. 
 
\begin{lemma}
\label{lem:radial-norm-comparison}
Let $m \colon \R_+ \to \R_+$ be nondecreasing and
suppose
that $t \mapsto m(t)/t$ is nonincreasing for $t > 0$. Then, for every
$1\leq r<\infty$,
\[
\|m\|_{L^r(\chi_n)}\asymp m(\|\chi_n\|_{L^r}).
\]
\end{lemma}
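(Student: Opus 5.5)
Write $s = \|\chi_n\|_{L^r}$ and $R \sim \chi_n$. The two monotonicity hypotheses give a pointwise two-sided comparison:
\[
m(t) \geq \min\{1, t/s\}\, m(s)
\quad\mbox{and}\quad
m(t) \leq \max\{1, t/s\}\, m(s)
\qquad\mbox{for all } t > 0 .
\]
For $t \geq s$ the lower bound uses that $m$ is nondecreasing and the upper bound uses that $m(t)/t$ is nonincreasing. For $t \leq s$ the roles of the two hypotheses swap.

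\emph{Upper bound.} The pointwise bound gives $m(R) \leq m(s)\max\{1, R/s\}$. Taking $L^r$ norms,
\[
\|m\|_{L^r(\chi_n)} \leq m(s)\,\big(1 + \|R\|_{L^r}/s\big) = 2\,m(s).
\]

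\emph{Lower bound.} The pointwise bound gives $m(R) \geq m(s)\min\{1, R/s\}$, so it suffices to show $\|\min\{1, R/s\}\|_{L^r} \gtrsim 1$ uniformly in $n$ and $r$. I expect this to be the only real step. I would use
\[
\|\min\{1, R/s\}\|_{L^r} \geq \tfrac{1}{2}\,\mathbf{P}\{R \geq s/2\}^{1/r},
\]
and then lower-bound the probability on the right. By \Cref{lem:chi-n-concentration}, $s \asymp \sqrt{\twomax{r}{n}}$, and there are two regimes.

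\emph{Case $r \lesssim n$.} Here $s \lesssim \sqrt{n}$. After adjusting constants, the event $\{R \geq s/2\}$ contains $\{R \geq c\sqrt{n}\}$ for a small universal $c$. That event has probability at least $1/2$ by the $\psi_2$ concentration of $\chi_n$ around $\mathbf{E}\chi_n \asymp \sqrt{n}$ (the same estimate used in the proof of \Cref{lem:chi-n-concentration}), with a direct check for small $n$.

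\emph{Case $r \gg n$.} Here $s \asymp \sqrt{r}$. We have $R \geq |G_1|$ and $\mathbf{P}\{|G_1| \geq c\sqrt{r}\} \geq \e^{-c' r}$, so raising to the power $1/r$ gives a universal positive constant.

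A cleaner route that avoids the cases is Paley–Zygmund applied to $R^r$. It gives
\[
\mathbf{P}\{R^r \geq 2^{-r}\mathbf{E}R^r\} \geq (1-2^{-r})^2\,\frac{(\mathbf{E}R^r)^2}{\mathbf{E}R^{2r}},
\]
and \Cref{lem:chi-n-concentration} yields $\|R\|_{L^{2r}}/\|R\|_{L^r} \lesssim 1$. Hence the ratio $(\mathbf{E}R^r)^2/\mathbf{E}R^{2r} = (\|R\|_{L^r}/\|R\|_{L^{2r}})^{2r}$ is at least $c^{2r}$, and after taking $r$-th roots $\mathbf{P}\{R \geq s/2\}^{1/r} \gtrsim 1$ uniformly.

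Combining the upper and lower bounds yields $\|m\|_{L^r(\chi_n)} \asymp m(s)$.
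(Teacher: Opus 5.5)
Your proof is correct and is essentially the paper's. The upper bound is the same pointwise estimate $m(t)\le m(s)(1+t/s)$ followed by Minkowski, and your ``cleaner route'' for the lower bound is exactly the paper's Paley--Zygmund bound for $\chi_n^r$ at level $2^{-r}\E\chi_n^r$, using $\|\chi_n\|_{L^{2r}}\lesssim\|\chi_n\|_{L^r}$ from \Cref{lem:chi-n-concentration}. Your bound $m(t)\ge\min\{1,t/s\}\,m(s)$ is just a repackaging of the paper's $m(s/2)\ge m(s)/2$.

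The case-split sketch is unnecessary. If you keep it, split at $r\le n$ versus $r>n$. When $r$ is a large constant multiple of $n$, the threshold $s/2$ exceeds $\E\chi_n$, so the claim $\mathbf{P}\{R\ge s/2\}\ge 1/2$ fails, even though taking the $1/r$-th root would still rescue the bound.
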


\begin{proof}
If $s \leq \|\chi_n\|_{L^r}$, then $m(s) 
\leq m(\|\chi_n\|_{L^r})$. On the other hand, 
if $\|\chi_n\|_{L^r} < s$, then $m(s)\leq \tfrac{s}{\|\chi_n\|_{L^r}}
m(\|\chi_n\|_{L^r})$. Combining these bounds,
\[
m(s)\leq m\big(\|\chi_n\|_{L^r}\big)\Big(1+\frac
{s}{\|\chi_n\|_{L^r}}\Big), \quad \mbox{for any}~s > 0.
\]
Taking $s = \chi_n$, we obtain $\|m\|_{L^r(\chi_n)}\leq 2 \,
m(\|\chi_n\|_{L^r})$.

For the reverse inequality, we use the Paley--Zygmund inequality:
\[
\P\Big\{\chi_n \geq \frac{\|\chi_n\|_{L^r}}{2}\Big\}^{1/r}
\geq \Big(1 - \frac{1}{2^r}\Big)^{2/r} 
\Big(\frac{\|\chi_n\|_{L^r}}{\|\chi_n\|_{L^{2r}}}\Big)^2 \gtrsim 1.
\]
Above, we used~\Cref{lem:chi-n-concentration}. 
Since $t \mapsto \tfrac{m(t)}{t}$ is nonincreasing, 
we have $m(\tfrac{1}{2}\|\chi_n\|_{L^r})\geq \tfrac{1}{2}m(\|\chi_n\|_{L^r})$.
Therefore
\[
\|m\|_{L^r(\chi_n)} \geq m\Big(\frac{\|\chi_n\|_{L^r}}{2}\Big)
\P\Big\{\chi_n \geq \frac{\|\chi_n\|_{L^r}}{2}\Big\}^{1/r}
\gtrsim m\big(\|\chi_n\|_{L^r}\big)
\]
as required.
\end{proof}

We are now in a position to prove the upper bound
in~\Cref{thm:optimal-reverse-holder-Gauss}. 

\begin{proof}[Proof of~\Cref{thm:optimal-reverse-holder-Gauss} (upper bound)]
Integrating 
in polar coordinates and 
using \Cref{lem:radial-norm-comparison},
\begin{equation}
\label{ineq:gaussian-upper-first}
\|\Proj{C}\|_{L^q(\gamma_n)}
=\|m_{C,q}\|_{L^q(\chi_n)}
\asymp m_{C,q}\big(\|\chi_n\|_{L^q}\big).
\end{equation}
For every $t>0$, it holds that
\[
\Proj{C}(t\theta)=t\Proj{C/t}(\theta),
\quad \mbox{where} \quad C/t= \{x/t : x\in C\}.
\]
Therefore, by \Cref{thm:optimal-reverse-holder-sphere}, for every $t > 0$,
\begin{equation}
\label{ineq:gaussian-spherical-at-radius}
m_{C,q}(t)
\leq \ExtProbProj{p}{q}{\sigma_{n-1}}m_{C,p}(t).
\end{equation}
Using \eqref{ineq:gaussian-spherical-at-radius} with $t=\|\chi_n\|_{L^q}$ and
using
$\|\chi_n\|_{L^q} \geq \|\chi_n\|_{L^p}$, we obtain 
\begin{multline}
\label{ineq:upper-bound-from-q-to-p}
m_{C,q}\big(\|\chi_n\|_{L^q}\big)
\\\leq \ExtProbProj{p}{q}{\sigma_{n-1}} \, 
m_{C,p}\big(\|\chi_n\|_{L^q}\big) 
\leq
\bigg[\ExtProbProj{p}{q}{\sigma_{n-1}}
\frac{\|\chi_n\|_{L^q}}{\|\chi_n\|_{L^p}}\bigg] 
\, m_{C,p}\big(\|\chi_n\|_{L^p}\big).
\end{multline}
Above, the final inequality came from the monotonicity
of
$t\mapsto
m_{C,p}(t)/t$. 
A second application of \Cref{lem:radial-norm-comparison} gives
$m_{C,p}(\|\chi_n\|_{L^p})\asymp\|\Proj{C}\|_{L^p(\gamma_n)}$. Combining this
with
\cref{ineq:gaussian-upper-first,ineq:gaussian-spherical-at-radius,ineq:upper-bound-from-q-to-p}
and then
using
\Cref{lem:chi-n-concentration}
and the upper bound from \Cref{thm:optimal-reverse-holder-sphere} (as established above),
\[
\frac{\|\Proj{C}\|_{L^q(\gamma_n)}}{\|\Proj{C}\|_{L^p(\gamma_n)}}
\lesssim
\ExtProbProj{p}{q}{\sigma_{n-1}}
\frac{\|\chi_n\|_{L^q}}{\|\chi_n\|_{L^p}}
\lesssim
\sqrt{\frac{\twomax{q}{n}}{\twomax{p}{n}}}
\Big(\frac{n}{(\twomin{p}{n})\log(\e \tfrac{n}{\twomin{q}{n}})}\Big)^\tau.
\]
Passing to the supremum over $C \in \AllConvSets{n}$ with $C \neq \{0\}$ yields the
result. 
\end{proof}

\section{Proofs of lower bounds}
\label{sec:lower-bounds} 

In this section, we present the proofs of the lower bounds of our main extremal results,~\Cref{thm:optimal-reverse-holder-sphere,thm:optimal-reverse-holder-Gauss}. 

\subsection{Supporting vectors of convex sets}
To obtain the lower bound for $\ExtProbProj{p}{q}{\sigma_{n-1}}$, in the regime
that $1 \leq p \leq q \leq p \log(\e n/p)$, we will use 
\emph{minimal norm supporting vectors}, as
introduced in~\Cref{sec:bounds-on-the-expected-norm}. 

For a probability measure $\mu$ on $\R^n$, define 
\[
\ExtProbSuppVec{p}{q}{\mu} = \sup_{\substack{C \in \AllConvSets{n}, C\neq \{0\} \\
C~\text{bounded}}}
\, \frac{\|\SuppVec{C}\|_{L^q(\mu)}}{\|\SuppVec{C}\|_{L^p(\mu)}}.
\]
The limit relation in~\Cref{lem:ray-limit-of-projections}
enables us to pass from the variational problem for projections, $\ExtProbProj{p}{q}{\sigma_{n-1}}$, to
the variational problem for supporting vectors, $\ExtProbSuppVec{p}{q}{\sigma_{n-1}}$. We will show, for all $1 \leq p \leq q \leq p \log(\e n/p)$, 
\begin{equation*}
\ExtProbSuppVec{p}{q}{\sigma_{n-1}} \asymp \ExtProbProj{p}{q}{\sigma_{n-1}}.
\end{equation*}
In words, the failure of dimension-free reverse Hölder inequalities, in
this range, is due to the supporting vectors of compact
convex sets. 

\medskip 

Throughout the remainder of this section, we use the notation 
\[
F_n(t) = \P_{\theta \sim \sigma_{n-1}}\{\theta_1 > t\}, 
\]
for $n \geq 1$ and $t > 0$. 
To obtain the lower bound on $\ExtProbSuppVec{p}{q}{\sigma_{n-1}}$, and thus on $\ExtProbProj{p}{q}{\sigma_{n-1}}$,
we consider the set
\[
C^\star_{n,p} = \conv\big(a_{n,p}B^n_2, e_1)
\quad \mbox{where} \quad 
F_n(a_{n,p}) = a_{n,p}^p.
\]

We now characterize the distribution of $\SuppVec{C^\star_{n,p}}$ and
the order of $a_{n,p}^p$.
\begin{lemma}[Properties of $C_{n,p}^\star$]
\label{lem:properties-of-bad-c}
For $1 \leq p \leq n$, the following hold. 
\begin{enumerate}[label=(\roman*)]
\item 
\label{item:well-defined}
The quantity $a_{n,p}$ is well-defined.
\item 
\label{item:asymptotics-of-anp}
It holds that 
\begin{equation}
\label{ineq:asymptotics-of-anp}
a_{n,p} \asymp \sqrt{\frac{p \log(\e n/p)}{n}}.
\end{equation}
\item 
\label{item:dist-of-support-bad-c}
When $\theta \sim \sigma_{n-1}$, the norm
$\|\SuppVec{C^\star_{n,p}}(\theta)\|_2$
has a two-point law: 
\[
a_{n,p}^p \delta_1 + (1 - a_{n,p}^p) \delta_{a_{n,p}}.
\]
\end{enumerate}
\end{lemma}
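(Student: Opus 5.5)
For \ref{item:well-defined}, I would work with the function $g(t) = F_n(t) - t^p$ on $[0,1]$. It is continuous, with $g(0) = 1/2 > 0$ and $g(1) = -1 < 0$. Moreover, $F_n$ is nonincreasing and $t^p$ is strictly increasing, so $g$ is strictly decreasing. Hence there is a unique root $a_{n,p}\in(0,1)$. For $n=1$, $F_1$ is a step function ($1/2$ on $[0,1)$ and $0$ at $1$), and the same monotonicity argument gives a unique crossing at $t = 2^{-1/p}$.

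For \ref{item:asymptotics-of-anp}, I would set $a_\star = \sqrt{p\log(\e n/p)/n}$ and show that $g(c a_\star) > 0 > g(C a_\star)$ for suitable universal constants $c, C$; monotonicity of $g$ then traps $a_{n,p}$ between them. Lemma~\ref{lem:spherical-estimates}\ref{ineq:lower-bound-on-lower-tail} gives
\[
\tfrac1{12}\e^{-12nt^2}\le F_n(t)\le 12\e^{-nt^2/12}\quad\text{for } t<1/12.
\]
At $t = Ca_\star$ the upper bound is $12(p/(\e n))^{C^2 p\log(\e n/p)/(12 p)}$, while $t^p = (C^2 p\log(\e n/p)/n)^{p/2}$. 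Taking logarithms, I need
\[
\frac{C^2}{12}\, p\log\frac{\e n}{p} \;\gtrsim\; \frac p2\log\frac{n}{C^2 p\log(\e n/p)} + \log 12,
\]
which holds for large $C$. The lower bound at $t = ca_\star$ is checked the same way. I expect the main friction to be the boundary regimes. When $p \asymp n$, $a_\star$ is of constant order and may exceed $1/12$, so Lemma~\ref{lem:spherical-estimates} no longer applies; there I would argue directly that $a_{n,p}\asymp 1$, since $F_n(1/2)\gtrsim e^{-cn}$ and $t^p\le 2^{-p}$ suffices when $p\ge c'n$. The cases $n\le 2$ and bounded $n$ can be absorbed into constants.

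For \ref{item:dist-of-support-bad-c}, I write $a = a_{n,p}$ and compute $h_{C^\star}(\theta) = \max\{\theta_1, a\}$ for $\theta\in\bfS^{n-1}$, since the support function of a convex hull is the maximum of the support functions. The three cases are as follows.
\begin{itemize}
\item If $\theta_1 > a$, the unique maximizer is $e_1$: any point of $C^\star$ is a convex combination $\lambda e_1 + (1-\lambda)z$ with $\|z\|\le a$, which attains $\theta_1$ only when $\lambda=1$. So the norm is $1$.
\item If $\theta_1 < a$, the maximizer is $a\theta$, the unique maximizer over $aB^n_2$, with norm $a$.
\item The tie $\theta_1 = a$ is $\sigma_{n-1}$-null.
\end{itemize}
Hence the norm equals $1$ with probability $F_n(a) = a^p$ and equals $a$ otherwise, which is the claimed two-point law.
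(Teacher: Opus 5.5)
Your proposal is correct and follows essentially the same route as the paper: monotonicity of $t\mapsto F_n(t)-t^p$ for (i), the two-sided cap bounds of \Cref{lem:spherical-estimates}\ref{ineq:lower-bound-on-lower-tail} at the test points $c\,b$ and $C\,b$ with $b=\sqrt{p\log(\e n/p)/n}$ for (ii) (with $n\le 2$ handled directly), and the formula $h_{C^\star_{n,p}}(\theta)=\max\{\theta_1,a_{n,p}\}$ together with the nullity of $\{\theta_1=a_{n,p}\}$ for (iii). The boundary regime is simpler than you suggest, and the paper handles it as follows: since $b\le 1$, the lower test point $c\,b$ always lies in the lemma's range $(0,1/12)$, and if $C\,b\ge 1/12$ the upper bound is simply $a_{n,p}<1\le 12C\,b$; your comparison of $F_n(1/2)$ with $2^{-p}$ is therefore unnecessary (and, as stated, it only works once $p/n$ exceeds roughly $0.2$, well above the small threshold at which $C\,b$ leaves the lemma's range).
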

\begin{proof}
For $n \geq 1$ and $t \in [0, 1]$, we use the shorthand notation:
\[
F_n(t) = \P_{\theta \sim \sigma_{n-1}}\{\theta_1 > t\}.
\]
For $n = 1$, it is easy to see that $a_{1,1} = 1/2$. For $n \geq 2$, $F_n$
is continuous and decreasing, while $t \mapsto t^p$ is continuous and
increasing. Note that $F_n(0) = 1/2$ while $F_n(1) = 0$; hence there is a
unique $a_{n,p} \in (0,1)$ such that $F_n(a_{n,p})=a_{n,p}^p$, which yields
claim~\ref{item:well-defined}.


Turning to claim~\ref{item:asymptotics-of-anp}, we will show 
\[
\frac{1}{100}\, b\leq a_{n,p}\leq 240\,b 
\quad \mbox{where} \quad b \equiv b_{n,p} = \sqrt{\frac{p\log(\e n/p)}{n}}.
\]
The case $n=1$ follows from
$a_{1,1}=1/2$ and $b_{1,1}=1$. If $n=2$, then the bounds can be directly 
verified from 
$F_2(t)=\pi^{-1}\arccos(t)$. We henceforth assume
$n\geq 3$. Set 
\[
r=\frac{p}{n}, \quad \mbox{and} \quad L = \log\frac{\e}{r}. 
\] 
Note that with this notation, $b=\sqrt{rL}$.
Set 
\[
c = \frac{1}{100} \quad \mbox{and} \quad \delta = 12c^2 = \frac{3}{2500}.
\]
Since $cb<1/12$, \Cref{lem:spherical-estimates} gives
\[
F_n(cb)^{1/p}
\geq 12^{-1/p}\exp(-12nc^2b^2/p)
\geq \frac1{12}\exp(-\delta L)
=\frac{\e^{-\delta}}{12}r^\delta.
\]
On the other hand, since $1-2\delta>1/2$, it holds that $r^{1-2\delta}L\leq
\log(\e/r)\sqrt{r} \leq 2$, as $r \in (0, 1]$. 
Therefore, $\sqrt{rL}\leq 2r^\delta$, and 
\[
cb \leq 2cr^\delta=\frac1{50}r^\delta
<\frac{\e^{-\delta}}{12}r^\delta
\leq F_n(cb)^{1/p}. 
\]
Hence, we conclude $a_{n,p}\geq cb = b/100$.

For the upper bound, if $20b\geq 1/12$, then
$a_{n,p}<1\leq 240b$. Otherwise $20b<1/12$, and
\Cref{lem:spherical-estimates} gives
\[
F_n(20b)^{1/p}
\leq 12^{1/p}\exp(-n(20b)^2/(12p))
\leq 12\e^{-100L/3}
=12\Big(\frac{r}{\e}\Big)^{100/3}.
\]
Since $r\leq 1$ and $L\geq 1$, we have
\[
12\Big(\frac{r}{\e}\Big)^{100/3}
\leq 12\frac{r^{1/2}}{\e^{100/3}}
<20\sqrt{rL}
=20b.
\]
Thus $F_n(20b)^{1/p}<20b$, and therefore $a_{n,p}\leq 20b$ in this case.
Combining the cases gives $a_{n,p}\leq 240b$.

Finally, note that for any $x \in \R^n$, it holds that 
\[
h_{C^\star_{n,p}}(x)=\max\big\{x_1, a_{n,p} \|x\|_2\big\}.
\]
Consequently, for $\theta \in \bfS^{n-1}$ we have $h_{C^\star_{n,p}}(\theta) =
\max\{a_{n,p}, \theta_1\}$. It follows that 
$\SuppVec{C^\star_{n,p}}(\theta) = e_1$ if $\theta_1 > a_{n,p}$ and otherwise 
$\SuppVec{C^\star_{n,p}}(\theta) = a_{n,p} \theta$. 
The distribution of the norm $\|\SuppVec{C^\star_{n,p}}(\theta)\|_2$ now follows by
the definition of $a_{n,p}$ and the fact that the event $\{\theta_1 =
a_{n,p}\}$ is null.
\end{proof}

We now establish the lower bound.

\begin{proposition}
\label{prop:auxiliary-problem}
For every $n \geq 1$ and any $1 \leq p \leq q \leq \infty$, the following
hold.
\begin{enumerate}[label=(\roman*)]
\item 
\label{item:lower-bound-via-supporting-vector}
For any probability measure $\mu$ on $\R^n$, 
$\ExtProbProj{p}{q}{\mu}\geq \ExtProbSuppVec{p}{q}{\mu}$. 
\item 
\label{item:lower-bound-on-spherical}
It holds that 
\[
\ExtProbSuppVec{p}{q}{\sigma_{n-1}}
\gtrsim \Big(
\frac{n}{(\twomin{p}{n}) \log(\e \tfrac{n}{\twomin{p}{n}})}
\Big)^{\tau},
\]
where $\tau = \tfrac{1}{2}(1-\tfrac{p}{q})$.
\end{enumerate}
\end{proposition}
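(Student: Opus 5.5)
For part~\ref{item:lower-bound-via-supporting-vector}, I will fix a bounded $C \in \AllConvSets{n}$ with $C \neq \{0\}$ and pass to dilates. Since $0 \in C$, the closure of $C$ is compact and has the same support function, so I may assume $C$ is compact. By \Cref{lem:ray-limit-of-projections}, $\Proj{C}(\lambda x) \to \SuppVec{C}(x)$ as $\lambda\to\infty$ for every $x$. Also $\Proj{\lambda^{-1} C}(x) = \lambda^{-1}\Proj{C}(\lambda x)$, so
\[
\frac{\|\Proj{\lambda^{-1}C}\|_{L^q(\mu)}}{\|\Proj{\lambda^{-1}C}\|_{L^p(\mu)}} = \frac{\|\Proj{C}(\lambda\,\cdot)\|_{L^q(\mu)}}{\|\Proj{C}(\lambda\,\cdot)\|_{L^p(\mu)}}.
\]
The proof of \Cref{lem:ray-limit-of-projections} gives the uniform bound $\|\Proj{C}(\lambda x)\|_2 \le \|\SuppVec{C}(x)\|_2 \le \rad_2(C)$. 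Dominated convergence then makes numerator and denominator converge to $\|\SuppVec{C}\|_{L^q(\mu)}$ and $\|\SuppVec{C}\|_{L^p(\mu)}$. For $q=\infty$, the essential supremum of the limit is at most the $\liminf$ of the essential suprema, by Fatou applied to indicators. If the denominator limit is positive, taking $\lambda\to\infty$ gives $\ExtProbProj{p}{q}{\mu} \ge \|\SuppVec{C}\|_{L^q}/\|\SuppVec{C}\|_{L^p}$. If $\|\SuppVec{C}\|_{L^p(\mu)} = 0$, the ratio in $\ExtProbSuppVec{p}{q}{\mu}$ is excluded by convention. Taking the supremum over $C$ proves~\ref{item:lower-bound-via-supporting-vector}.

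For part~\ref{item:lower-bound-on-spherical}, I will first handle $p \le n$ by testing $C^\star_{n,p}$ and using \Cref{lem:properties-of-bad-c}. Write $a = a_{n,p}$. The two-point law of $\|\SuppVec{C^\star_{n,p}}(\theta)\|_2$ gives
\[
\|\SuppVec{C^\star_{n,p}}\|_{L^p}^p = a^p + (1-a^p)a^p \le 2a^p, \qquad \|\SuppVec{C^\star_{n,p}}\|_{L^q} \ge (a^p)^{1/q} = a^{p/q}.
\]
For $q=\infty$ the second quantity equals $1 = a^{0}$. The ratio is therefore at least $2^{-1/p} a^{p/q - 1} \ge \tfrac12 a^{-2\tau}$. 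By \eqref{ineq:asymptotics-of-anp}, $a \asymp \sqrt{p\log(\e n/p)/n}$, so $a^{-2\tau} \gtrsim \big(n/(p\log(\e n/p))\big)^{\tau}$. The implicit constant is raised to the power $\tau \le 1/2$, so it stays universal. Since $p \le n$, this is exactly the claimed bound.

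For $p > n$, we have $\twomin{p}{n} = n$, so the right-hand side equals $1^\tau = 1$. Any bounded $C \neq \{0\}$ then gives a ratio at least $1$ by monotonicity of $L^r$ norms on a probability space. An example is $C = B^n_2$, for which $\SuppVec{C}(\theta) = \theta$.

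The main obstacle is the limiting argument in~\ref{item:lower-bound-via-supporting-vector}. It requires the dilation identity, the domination $\|\Proj{C}(\lambda x)\|_2 \le \|\SuppVec{C}(x)\|_2$ from the proof of \Cref{lem:ray-limit-of-projections}, and care with the $L^\infty$ case. The spherical computation in~\ref{item:lower-bound-on-spherical} is routine once \Cref{lem:properties-of-bad-c} is available.
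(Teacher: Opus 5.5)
Your proposal is correct and follows the paper's proof essentially step for step. In part (i) you pass to dilates and use the pointwise limit from \Cref{lem:ray-limit-of-projections} with dominated convergence; in part (ii) you test $C^\star_{n,p}$ through its two-point law and \eqref{ineq:asymptotics-of-anp}, and the case $p>n$ is trivial. The only differences are cosmetic. You handle $q=\infty$ by lower semicontinuity of the essential supremum under pointwise limits, whereas the paper takes the supremum over finite $q$. Your closure step is unnecessary, since members of $\AllConvSets{n}$ are closed by definition.
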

\begin{proof}
Throughout, we may assume $p < q$; otherwise, the result is trivial.
Denote by $\AllConvSets{n}'$ the class of bounded convex sets $C \in \AllConvSets{n}$ such that $C
\neq
\{0\}$. Note that for $1 \leq r < \infty$, 
\begin{equation}
\label{eqn:limit-relation-for-supporting-vector}
\lim_{\alpha \to 0^+} 
\frac{\E_\mu\|\Pi_{\alpha C}(X)\|_2^r}{\alpha^r}
= 
\lim_{\lambda \to \infty} 
\E_\mu\|\Pi_{C}(\lambda X)\|_2^r = \E_\mu \|\SuppVec{C}(X)\|_2^r,
\end{equation}
by dominated convergence, using, \eg that $\|\Pi_C(x)\|_2 \leq \rad_2(C) <
\infty$ and the pointwise convergence 
from~\Cref{lem:ray-limit-of-projections}.
Equivalently, for any $C \in \AllConvSets{n}'$ and any probability measure $\mu$ on
$\R^n$, 
\[
\|\Proj{\alpha C}\|_{L^r(\mu)} = 
\big(1 + o(1)\big)\, \alpha \|\SuppVec{C}\|_{L^r(\mu)}, \quad
\mbox{as}\quad
\alpha \to
0^+.
\]
Consequently, for any $1 \leq p < q < \infty$,
\begin{subequations}
\begin{equation}
\label{ineq:for-p-finite}
\ExtProbProj{p}{q}{\mu} \geq \sup_{\alpha > 0} \frac{\|\Pi_{\alpha
C}\|_{L^q(\mu)}}{\|\Pi_{\alpha
C}\|_{L^p(\mu)}} \geq 
\frac{\|\SuppVec{C}\|_{L^q(\mu)}}{\|\SuppVec{C}\|_{L^p(\mu)}}. 
\end{equation}
On the other hand, for $q = \infty$, we have from the above inequality  
\begin{equation}
\label{ineq:for-p-infinite}
\ExtProbProj{p}{\infty}{\mu} \geq \sup_{1 \leq q < \infty} \ExtProbProj{p}{q}{\mu} \geq 
\frac{\sup_{1 \leq q < \infty} \|\SuppVec{C}\|_{L^q(\mu)}}{\|\SuppVec{C}\|_{L^p(\mu)}} = 
 \frac{\|\SuppVec{C}\|_{L^\infty(\mu)}}{\|\SuppVec{C}\|_{L^p(\mu)}}.
\end{equation}
\end{subequations}
Passing to the supremum over $C \in \AllConvSets{n}'$ in
inequalities~\eqref{ineq:for-p-finite} and~\eqref{ineq:for-p-infinite} yields
$\ExtProbProj{p}{q}{\mu}  \geq \ExtProbSuppVec{p}{q}{\mu}$ for any $1 \leq p < q \leq
\infty$, which establishes claim~\ref{item:lower-bound-via-supporting-vector}.

For claim~\ref{item:lower-bound-on-spherical},
as $\ExtProbSuppVec{p}{q}{\mu} \geq 1$, it clearly suffices to consider
the case when $1 \leq p \leq n$. Note that for any $r \geq p \geq 1$,
by~\Cref{lem:properties-of-bad-c}\ref{item:dist-of-support-bad-c}, it holds
that
\[
\|\SuppVec{C^\star_{n,p}}\|_{L^r(\sigma_{n-1})}
= 
\big(a_{n,p}^p + (1-a_{n,p}^p) a_{n,p}^r\big)^{1/r}
\asymp a_{n,p}^{p/r},
\] 
as $a_{n,p} \in (0, 1]$ by definition. Consequently, taking $r \in \{p, q\}$
yields:

\[
\ExtProbSuppVec{p}{q}{\sigma_{n-1}} \geq 
\frac{\|\SuppVec{C^\star_{n,p}}\|_{L^q(\sigma_{n-1})}}{\|\SuppVec{C^\star_{n,p}}\|_{L^p
(\sigma_{n-1})}} \asymp a_{n,p}^{-2\tau} 
\asymp 
\Big(\frac{n}{p \log (\e n/p)} \Big)^\tau,
\]
as required. 
\end{proof}

\subsection{Lower bound for the sphere} 
\label{sec:lower-bound-in-the-spherical-case}

To establish the lower bound in~\Cref{thm:optimal-reverse-holder-sphere}, we
consider three regimes, depending on the configuration of the triple $(n, p,
q)$. 

\medskip 

Throughout this section, let us denote the lower bound 
\[
\ell(n,p,q) = \Big(\frac{n}{(\twomin{p}{n})\log(\e 
\tfrac{n}{\twomin{q}{n}})}\Big)^{\tfrac{1}{2}(1 -
\tfrac{p}{q})}.
\]
In the construction of the lower bound, we consider the sets 
\[
C(a, b) = b \conv(a B^n_2, e_1),
\]
for $a, b > 0$. We set $c = \frac{1}{144(1+\sqrt{6})^2}$.

\subsubsection{Case 1, $p \leq q \leq p \log(\e n/p)$.}
\label{sec:small-q-case}

A simple observation is that in this case, we have 
\begin{equation}
\label{eqn:equivalent-for-ell-small-q} 
\ell(n, p, q) \asymp \Big(\frac{n}{p \log (\e n/p)}\Big)^{\tfrac{1}{2}(1 -
\tfrac{p}{q})}.
\end{equation}
Indeed, this follows from 
\[
\frac{1}{\e} \log \frac{\e n}{p} \leq \log \frac{\e n}{p} + \log \frac{p}{q} =
\log
\frac{\e
n}{q}
\leq \log \frac{\e n}{p}.
\]
The last inequality used $q \leq p \log(\e n /p)$ and $\log
\frac{x}{\log x} \geq \frac{1}{\e} \log x$ for $x \geq \e$, which was applied
with $x = \tfrac{\e n}{p}$. 
Now by~\Cref{prop:auxiliary-problem} and using $\twomin{p}{n} = p$,
\[
\ExtProbProj{p}{q}{\sigma_{n-1}} \geq \ExtProbSuppVec{p}{q}{\sigma_{n-1}} \gtrsim
\Big(\frac{n}{p \log(\e n/p)}\Big)^{\tfrac{1}{2}(1-\tfrac{p}{q})} 
\asymp \ell(n,p,q),
\]
by~\cref{eqn:equivalent-for-ell-small-q}.

\subsubsection{Case 2, $p \log(\e n/p) < q < c\, n$.}
\label{sec:large-q-case}

We start with the basic observation that in this case, 
\begin{equation}
\label{eqn:equivalent-in-the-middle-case}    
\ell(n, p, q) \asymp \sqrt{\frac{n}{p \log(\e n/q)}}.
\end{equation}
Indeed, this follows from the observation that 
\begin{equation}
\label{ineq:equivalent-in-middle-v2}    
1 \leq \Big(\frac{n}{p \log(\e n/q)}\Big)^{p/q} \leq
\Big(\frac{n}{p}\Big)^{p/q} = 
\exp\Big(\frac{p}{q} \log \frac{n}{p}\Big) \leq \e.
\end{equation}

To establish the lower bound, we need the following result, which characterizes
the projection onto the sets $C(a, b)$. 
\begin{lemma}
\label{lem:projection-norm-for-C}
Let $n \geq 2$. 
For any $a, b \in (0, 1]$ and any $\theta \in \bfS^{n-1}$, it holds
that
\begin{equation}
\label{eqn:formula-of-the-norm-projection}
\|\Proj{C(a,b)}(\theta)\|_2 = \sqrt{a^2 b^2 + \tau^2(\theta)}, 
\end{equation}
where 
\[
\tau(\theta)^2 = \min\bigg\{b^2(1 -a^2), \Big(\theta_1
\sqrt{1-a^2} - a
\sqrt{1 - \theta_1^2}\Big)_+^2\bigg\}.
\]    
In particular, if $a < 1$, then it holds that 
\begin{equation}
\label{eqn:key-equivalent-for-the-projection}
\|\Proj{C(a,b)}(\theta)\|_2 \asymp a b+
\frac{1}{\sqrt{1-a^2}}\min\Big\{b(1-a^2), (\theta_1 -
a)_+\Big\}. 
\end{equation}
\end{lemma}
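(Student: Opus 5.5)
By rotational symmetry about the $e_1$ axis, the computation reduces to a two-dimensional one. Fix $\theta\in\bfS^{n-1}$ and write $\theta=\theta_1e_1+\sqrt{1-\theta_1^2}\,w$ with $w\perp e_1$ a unit vector (any such $w$ if $\theta=\pm e_1$). The set $C(a,b)=b\conv(aB^n_2,e_1)$ is the solid "ice-cream cone" consisting of the ball $abB^n_2$ together with the convex hull of that ball and the apex $be_1$. It is invariant under rotations fixing $e_1$, so $\Proj{C(a,b)}(\theta)$ lies in the plane spanned by $e_1,w$. I will therefore work in $\R^2$ with the disk $abB^2_2$, apex $P=(b,0)$, and target point $x=(\theta_1,\sqrt{1-\theta_1^2})=(\cos\phi,\sin\phi)$.

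First I determine the geometry. The tangent points from $P$ to the circle of radius $ab$ are at angle $\alpha$ with $\cos\alpha=a$ measured from $e_1$, namely $T=ab(a,\sqrt{1-a^2})$. The tangent segment $[T,P]$ has length $b\sqrt{1-a^2}$ and direction orthogonal to $T$. The boundary of the planar section therefore consists of the circular arc $\{ab(\cos\psi,\sin\psi):|\psi|\geq\alpha\}$ and the two tangent segments.

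Next I split $x$ into three regions according to its normal-cone region and compute the projection in each. (a) If $\phi\geq\alpha$, equivalently $\theta_1\le a$ (with $\theta_1\sqrt{1-a^2}-a\sqrt{1-\theta_1^2}\le 0$ exactly then), the projection is $ab\,x$, of norm $ab$. (b) If $x$ lies in the strip orthogonally above the segment $[T,P]$, the projection is $T+s u$, where $u=(\sqrt{1-a^2},-a)$ is the unit tangent direction and $s=\langle x-T,u\rangle=\langle x,u\rangle=\theta_1\sqrt{1-a^2}-a\sqrt{1-\theta_1^2}$. This uses $\langle T,u\rangle=0$ and $x_2\ge0$. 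Since $T\perp u$, the norm squared is $a^2b^2+s^2$, and the strip condition is $0\le s\le b\sqrt{1-a^2}$. (c) If $s>b\sqrt{1-a^2}$, the projection is $P$, of norm $b=\sqrt{a^2b^2+b^2(1-a^2)}$. These three cases combine into the single formula~\eqref{eqn:formula-of-the-norm-projection}, with $\tau^2=\min\{b^2(1-a^2),s_+^2\}$. Each case needs a check that $x-\Pi$ lies in the normal cone, via \Cref{lem:projection-variational}. In case (c) this uses that $|x|=1$ forces $x$ to lie on the correct side: the normal cone at $P$ is bounded by the normals $(a,\sqrt{1-a^2})$ and its reflection, and $s>b\sqrt{1-a^2}$ gives $\langle x-P,u\rangle>0$. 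Keeping this verification clean is the main care point, but it is routine.

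For~\eqref{eqn:key-equivalent-for-the-projection}, I use $\sqrt{A^2+B^2}\asymp A+B$, which reduces the claim to showing $s_+\asymp(\theta_1-a)_+/\sqrt{1-a^2}$ whenever $a<1$. Writing $\theta_1=\cos\phi$ and $a=\cos\alpha$ gives $s=\sin(\alpha-\phi)$, and $\theta_1-a=\cos\phi-\cos\alpha=2\sin\frac{\alpha+\phi}{2}\sin\frac{\alpha-\phi}{2}$ for $0\le\phi<\alpha\le\pi/2$. Then
\[
\frac{\theta_1-a}{\sin(\alpha-\phi)}=\frac{\sin\frac{\alpha+\phi}{2}}{\cos\frac{\alpha-\phi}{2}}\in\Big[\sin\tfrac{\alpha}{2},\,\sqrt2\sin\alpha\Big]\subset\Big[\tfrac{\sin\alpha}{2},\,2\sin\alpha\Big],
\]
and $\sin\alpha=\sqrt{1-a^2}$. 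Hence $s_+\asymp(\theta_1-a)_+/\sqrt{1-a^2}$. Substituting into $\min\{b\sqrt{1-a^2},s_+\}$ and factoring out $1/\sqrt{1-a^2}$ gives the stated form with $\min\{b(1-a^2),(\theta_1-a)_+\}$.
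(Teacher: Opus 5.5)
Your proposal is correct and is essentially the paper's proof. It uses the same three regimes ($\theta_1\le a$, the tangent-segment regime, and the apex regime), and your middle-regime projection $T+su$ is exactly the paper's $y=ab\,w+\beta v$ in the same orthonormal frame, followed by the same reduction $\sqrt{A^2+B^2}\simeq A+B$.

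The differences are cosmetic. You reduce to a planar picture by symmetry, whereas the paper checks the variational inequality directly in $\R^n$. Two small corrections on that step: the symmetry argument needs invariance under all orthogonal maps fixing $e_1$, including reflections, since rotations alone do not suffice when $n=3$; and the second normal-cone inequality at the apex comes from $x_2\ge 0$, not from $|x|=1$. You also compare $s$ with $(\theta_1-a)/\sqrt{1-a^2}$ via half-angle formulas rather than the paper's rationalization identity.
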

\begin{proof}

We compute the projection according to the location of $\theta_1$.
Throughout we set 
\[
A \equiv A(a,b) =  b(1-a^2) + a \sqrt{1-b^2(1-a^2)}.
\]
\paragraph{Case I:  $\theta_1\le a$}
In this case, 
\[
h_{C(a,b)}(\theta)=b\max\{a,\theta_1\}=ab.
\] 
Consequently, for every
$z\in C(a,b)$,
\[
\langle\theta-ab\theta,z-ab\theta\rangle
=(1-ab)(\langle\theta,z\rangle-ab)
\leq 0.
\]
\Cref{lem:projection-variational} then implies $\Proj{C(a,b)}(\theta)=ab\theta$. In this case, 
by direct computation, $\tau^2(\theta) = 0$, and hence we obtain~\eqref{eqn:formula-of-the-norm-projection}.

\medskip
\paragraph{Case II: $a < \theta_1 < A$} Equivalently, we have 
\[
0 < \theta_1\sqrt{1-a^2}-a\sqrt{1-\theta_1^2}
< b\sqrt{1-a^2}.
\]
There exists a unit vector $u\in\bfS^{n-1}\cap e_1^\perp$ such that 
\[
\theta=\theta_1e_1+\sqrt{1-\theta_1^2}\,u.
\]
Consider the vectors
\[
w=ae_1+\sqrt{1-a^2}\,u,\quad \mbox{and} \quad 
v=\sqrt{1-a^2}\,e_1-au.
\]
Then $w$ and $v$ are orthonormal; moreover, we can write
\[
\theta= \alpha w+ \beta v, 
\quad e_1 = aw +\sqrt{1-a^2} v.
\]
Above, 
\[
\alpha=a\theta_1+\sqrt{1-a^2}\sqrt{1-\theta_1^2}, 
\quad \beta = \theta_1\sqrt{1-a^2}
-a\sqrt{1-\theta_1^2}.
\]
We write $y = ab w + \beta v$. Since $be_1 = abw + b\sqrt{1-a^2}\, v$ and $\beta \in (0, b \sqrt{1-a^2})$, we see that $y \in \conv(\{ab \, w, be_1\})\subset C(a,b)$. Moreover,  
\[
\langle \theta - y, be_1 - y \rangle = 
(\alpha - ab) (b\sqrt{1-a^2} - \beta) \langle  w, v \rangle = 0.
\]
On the other hand, 
\[
ab \|\theta - y\|_2 -
\langle \theta - y, y \rangle 
= ab \Big(|\alpha - ab| - (\alpha - ab)\Big) = 0.
\]
Above, we used 
\[
\alpha - ab = \sqrt{1-\beta^2} - ab 
> \sqrt{a^2 b^2 + 1 - b^2} - ab \geq 0.
\]
Therefore, by convexity $\langle \theta - y, z - y\rangle \leq 0$ for all $z \in C(a,b)$, and hence $\Proj{C(a,b)}(\theta) = y$. Note that 
\[
\|\Proj{C(a,b)}(\theta)\|_2 = \sqrt{a^2 b^2 + \gamma^2(\theta)}, 
\]
where 
\[
\gamma^2(\theta) = \beta^2 = (\theta_1\sqrt{1-a^2} - a\sqrt{1-\theta_1^2})^2.
\]
Because $\theta_1 \leq A$, $\gamma^2(\theta) = \tau^2(\theta)$, 
as required. 

\medskip 
\paragraph{Case III: $A \leq \theta_1 \leq 1$} 
Equivalently, we have 
\[
(\theta_1 - b)\sqrt{1-a^2}\geq a\sqrt{1-\theta_1^2}.
\]
Squaring this inequality,
\[
(\theta_1-b)^2 \geq a^2\Big(1-\theta_1^2 + (\theta_1-b)^2\Big).
\]
If $z \in ab B^n_2$, then 
\[
\langle \theta - be_1, z - be_1\rangle 
\leq ab \|\theta - b e_1\|_2 - b(\theta_1 - b)
\leq 0.
\]
By convexity and~\Cref{lem:projection-variational}, this implies $\Proj{C(a,b)}(\theta) = be_1$.
Since $\theta_1 \geq A$, 
it holds that $\tau^2(\theta) = b^2(1-a^2)$ and thus we obtain~\eqref{eqn:formula-of-the-norm-projection}.

\medskip
\paragraph{Proof of~\eqref{eqn:key-equivalent-for-the-projection}} Assume that $a<1$.
First observe that: 
\[
\theta_1\sqrt{1-a^2}-a\sqrt{1-\theta_1^2}
=\frac{(\theta_1-a)(\theta_1+a)}
{\theta_1\sqrt{1-a^2}+a\sqrt{1-\theta_1^2}}.
\]
First suppose $\theta_1>a$. Since
$\sqrt{1-\theta_1^2}\le\sqrt{1-a^2}$, the display above shows
\[
\qquad
\frac{\theta_1-a}{\sqrt{1-a^2}}
\le \theta_1\sqrt{1-a^2}-a\sqrt{1-\theta_1^2}
\le\frac{2(\theta_1-a)}{\sqrt{1-a^2}}.
\]
Therefore,
\begin{multline*}
\min\Big\{b\sqrt{1-a^2},
\big(\theta_1\sqrt{1-a^2}-a\sqrt{1-\theta_1^2}\big)_+\Big\}
\\\asymp \frac1{\sqrt{1-a^2}}\min\{b(1-a^2),(\theta_1-a)_+\}.
\end{multline*}
Of course, if $\theta_1 < a$, then the relation above continues to hold (both sides vanish). The conclusion then follows from $\sqrt{x^2+y^2}\asymp x+y$, for $x, y \geq 0$.
\end{proof}

To obtain the lower bound in this case, 
we
consider
the
set
\[
C_n = C(a_n, b_n) \quad \mbox{where} \quad a_n = 
\sqrt{6\frac{p}{n} \log
\frac{\e
n}{q}}
\quad \mbox{and} \quad 
b_n =  \sqrt{\frac{q}{n}}. 
\]
(Of course, $a_n, b_n$ also depend on the pair $(p, q)$, but we suppress that in
the notation.)
Observe that $a_n^2 \in (0, 6]$. 
If $a_n > 1/2$, then 
\[
\ExtProbProj{p}{q}{\sigma_{n-1}} \geq 1 \asymp \frac{1}{a_n} \asymp \ell(n,p,q).
\]
On the other hand, if $a_n^2 \in (0, 1/2]$, then $(1-a_n^2) \asymp
1$,
and
hence
\begin{equation}
\label{eqn:key-relation-for-the-lower-bound-set}
\|\Proj{C_n}(\theta)\|_2 
\asymp a_n b_n + \min\{b_n, (\theta_1 - a_n)_+\}, \quad \mbox{for any}~\theta
\in \bfS^{n-1},
\end{equation}
by~\Cref{lem:projection-norm-for-C}.
Consequently, by~\Cref{lem:truncated-lp-estimate}, 
\begin{align}
\|\Proj{C_n}\|_{L^p(\sigma_{n-1})} 
&\lesssim 
a_n b_n + 
\|(\theta_1 - a_n)_+\|_{L^p}
\nonumber \\ 
&\lesssim 
a_n b_n + \frac{p}{n \, a_n}  \sqrt\frac{q}{
n}
\asymp \Big(1 + \frac{1}{\log(\e n/q)}\Big) a_n b_n
\lesssim a_n b_n.
\label{ineq:upper-on-p-norm-middle-case}
\end{align} 
Additionally, from
relation~\eqref{eqn:key-relation-for-the-lower-bound-set}, we have
\[
\|\Proj{C_n}\|_{L^q(\sigma_{n-1})} \gtrsim b_n \P\{\theta_1 \geq a_n + b_n\}^{1
/q}.
\]
On the other hand, using $q \geq p \log \tfrac{\e n}{p}$, we have 
\[
\frac{a_n^2}{6} = b_n^2  \frac{p}{q} \log \frac{\e n}{q}
\leq b_n^2.
\]
Hence, $a_n \leq \sqrt{6} b_n$, and consequently $a_n + b_n \leq (1+\sqrt{6}) \sqrt{\tfrac{q}{n}} < \tfrac{1}{12}$ by our choice of $c$. 
Combining the previous two displays, and using
\Cref{lem:spherical-estimates}\ref{ineq:lower-bound-on-lower-tail}, 
\begin{equation}
\label{ineq:lower-on-q-norm-middle-case}
\|\Proj{C_n}\|_{L^q(\sigma_{n-1})}
\gtrsim b_n \e^{-12(1+\sqrt{6})^2} \asymp b_n.
\end{equation}
Combining inequalities~\eqref{ineq:upper-on-p-norm-middle-case}
and~\eqref{ineq:lower-on-q-norm-middle-case}, we obtain 
\[
\ExtProbProj{p}{q}{\sigma_{n-1}}
\geq \frac{\|\Proj{C_n}\|_{L^q(\sigma_{n-1})}}{\|\Proj
{C
_n
}\|_{L^p
(\sigma_{n-1
})}} 
\gtrsim \frac{1}{a_n} \asymp \ell(n,p,q),
\]
by display~\eqref{eqn:equivalent-in-the-middle-case}, as required. 

\subsubsection{Case 3, $q \geq c\, n$ and $q > p \log \tfrac{\e n}{p}$.} 
In this case, we observe that 
\begin{equation}
\label{eqn:equivalent-in-large-case}
\ell(n,p,q) \asymp \max\Big\{1, \sqrt{\frac{n}{p}}\Big\}.    
\end{equation}
Indeed, if $p \geq n$, then $\ell(n,p,q) = 1$, so there is nothing to prove. On
the other hand, if $p \leq n$, then the above display follows from
\[
1 \leq \Big(\frac{n}{p}\Big)^{p/q}  \leq \e, 
\]
with the last inequality following from our assumption on the triple $(p, q, n)$.

Now, we consider the 
ray and its associated projector,
\[
C= \{\lambda e_1 : \lambda \geq 0\}, \quad  
\mbox{and} \quad 
\Proj{C}(x)=(x_1)_+e_1.
\]
Hence, with $\theta \sim \sigma_{n-1}$, it holds
by~\Cref{lem:coordinate-moments} that
\begin{multline*}
\ExtProbProj{p}{q}{\sigma_{n-1}}
\\
\geq \frac{\|\Proj{C}\|_{L^q(\sigma_{n-1})}}{\|\Proj{C}\|_{L^p
(\sigma
_
{n
-1})}}
= 
\frac{\|(\theta_1)_+\|_{L^q}}{\|(\theta_1)_+\|_{L^p}} \asymp
\sqrt\frac{\twomin{q}{n}}{\twomin{p}{n}} \asymp \max\Big\{1, \sqrt{\frac{n}{p}}\Big\} \asymp
\ell(n,p,q),
\end{multline*}
by relation~\eqref{eqn:equivalent-in-large-case}.
\subsection{Lower bound in Gauss space}

In this section, we establish the lower bound when $\mu = \gamma_n$.

\begin{proof}[Proof of~\Cref{thm:optimal-reverse-holder-Gauss} (lower bound)]

We split the argument into three cases, 
depending on the configuration of the
triple $(n, p, q)$. 

\medskip
\paragraph{Case 1: $q\leq n$}
In this case, we can reduce the problem to the spherical setting. 
Fix a convex set $C \in\AllConvSets{n}$ such that $C \neq\{0\}$. 
Since 
$p, q \leq n$, \Cref{lem:chi-n-concentration} gives 
$\|\chi_n\|_{L^p} \asymp \|\chi_n\|_{L^q} \asymp \sqrt{n}$. Hence, for $r
\in \{p, q\}$, by
\Cref{lem:radial-norm-comparison}
and the monotonicity of
$m_{C,r}$,
\[
\|\Proj{\sqrt{n} C}\|_{L^r(\gamma_n)}
= 
\|m_{\sqrt{n} C,r}\|_{L^r(\chi_n)}
\asymp m_{\sqrt{n} C,r}\Big(\|\chi_n\|_{L^r}\Big)
\asymp m_{\sqrt{n} C,r}(\sqrt n).
\]
On the other hand, 
\[
m_{\sqrt{n} C,r}(\sqrt n) = \sqrt{n}\, 
m_{C, r}(1)
=\sqrt n\,\|\Proj{C}\|_{L^r(\sigma_{n-1})}.
\]
Consequently, combining the previous two displays, we have 
\[
\frac{\|\Proj{\sqrt{n}
C}\|_{L^q(\gamma_n)}}{\|\Proj{\sqrt{n} C}\|_{L^p(\gamma_n)}}
\asymp
\frac{\|\Proj{C}\|_{L^q(\sigma_{n-1})}}{\|\Proj{C}\|_{L^p(\sigma_{n-1})}}.
\]
Passing to the supremum over $C \in \AllConvSets{n}$ with $C \neq \{0\}$, we obtain 
\[
\ExtProbProj{p}{q}{\gamma_n}
\asymp 
\ExtProbProj{p}{q}{\sigma_{n-1}}
\asymp 
\sqrt{\frac{\twomax{q}{n}}{\twomax{p}{n}}}\Big(\frac{n}{(\twomin{p}{n})\log(\e \tfrac{n}{\twomin{q}{n}})}\Big)^\tau,
\]
where we applied 
\Cref{thm:optimal-reverse-holder-sphere}, 
and the fact that $p, q \leq n$. 

\medskip
\paragraph{Case 2: $p\geq n$}
Take $C=\R^n$. 
Then, by
\Cref{lem:chi-n-concentration} and the relation ${\twomin{q}{n} = \twomin{p}{n} = n}$, we
obtain:
\[
\ExtProbProj{p}{q}{\gamma_n}
\geq \frac{\|\chi_n\|_{L^q}}{\|\chi_n\|_{L^p}}
\asymp \sqrt{\frac{\twomax{q}{n}}{\twomax{p}{n}}}
\asymp 
\sqrt{\frac{\twomax{q}{n}}{\twomax{p}{n}}} \bigg(\frac{n}{(\twomin{p}{n}) \log(\e \tfrac{n}{\twomin{q}{n}})}\bigg)^\tau,
\]
as required. 

\medskip
\paragraph{Case 3: $p<n<q$}
We consider the ray and its associated projector, 
\[
C= \{\lambda e_1 : \lambda \geq 0\}, \quad  
\mbox{and} \quad 
\Proj{C}(x)=(x_1)_+e_1.
\]
Hence, $\|\Proj{C}(G)\|_2=(G_1)_+$, and thus 
standard Gaussian estimates yield
\[
\|(G_1)_+\|_{L^r} = 2^{-1/r} \|G_1\|_{L^r} 
\asymp \|G_1\|_{L^r} \asymp  \sqrt r,
\quad \mbox{for}~1\leq r < \infty.
\]
Thus
\[
\ExtProbProj{p}{q}{\gamma_n}
\geq
\frac{\|(G_1)_+\|_{L^q}}{\|(G_1)_+\|_{L^p}}
\asymp \sqrt{\frac{q}{p}} 
= \sqrt{B(n,p,q)} \sqrt{\frac{\twomax{q}{n}}{\twomax{p}{n}}} \bigg(\frac{n}{(\twomin{p}{n}) \log(\e
\tfrac{n}{\twomin{q}{n}})}\bigg)^\tau.
\]
Above, using $\twomax{q}{n} = q, \twomin{q}{n} = n, \twomin{p}{n} = p, \twomax{p}{n}=n$, we have
\begin{multline*}
B(n,p,q) = \frac{q}{\twomax{q}{n}}\frac{\twomax{p}{n}}{p}
\bigg(\frac{(\twomin{p}{n}) \log(\e
\tfrac{n}{\twomin{q}{n}})}{n}\bigg)^{1-p/q}
\\=
\bigg(\frac{n}{p}\bigg)^{p/q}
\geq \Big(\inf_{x \geq 1} x^{1/x}\Big)^{n/q} 
\geq 1.
\end{multline*}
Combining the previous two displays yields the claim. 
\end{proof}
\section{Proofs from~\Cref{sec:bounds-on-the-expected-norm}}
\label{sec:deferred-proofs-of-upper-bounds-on-expected-norm}

In this section, we collect the proofs of the upper bounds on the norm of the projection, which were developed in~\Cref{sec:bounds-on-the-expected-norm}.

\subsection{Proof of~\Cref{prop:qualitative-behavior-of-metric-projections}}
Put $K=\cl\cone C$. The projection satisfies the rescaling relation,
\[
\Proj{\lambda C}(x)=\lambda \Proj{C}(x/\lambda), \qquad 
\mbox{for all}~x\in \R^n,~\mbox{and all}~\lambda > 0.
\]
Fix nonzero $x\in\R^n$. We can write $x=r\theta$ for $r>0$ and 
$\theta \in \bfS^{n-1}$. By
\Cref{lem:pointwise-monotonicity-proj},
\[
\|\Proj{\lambda C}(x)\|_2
= \lambda \|\Proj{C}(r\theta/\lambda)\|_2 = 
r\,\frac{\|\Proj{C}((r/\lambda)\theta)\|_2}{r/\lambda},
\]
which is nondecreasing in $\lambda$. We claim that its limit is
$\|\Proj{K}(x)\|_2$. Indeed, let
\[
\varphi_x(z)=2\langle x,z\rangle-\|z\|_2^2.
\]
Since $0\in C$, the sets $\lambda C$ are increasing and
\[
\bigcup_{\lambda>0}\lambda C=\cone C.
\]
Moreover, since $\varphi_x$ is continuous and $K=\cl\cone C$,
\begin{equation}
\label{eqn:convergence-of-suprema}
\sup_{z\in \lambda C}\varphi_x(z)
\uparrow
\sup_{z\in K}\varphi_x(z),
\qquad \mbox{as } \lambda\uparrow\infty.
\end{equation}
Indeed, the upper bound is immediate from $\lambda C\subset K$. Conversely,
if $u=\Proj{K}(x)$ and $\varepsilon>0$, then by the continuity of
$\varphi_x$ and since $\cone C$ is dense in $K$, there exists
$v\in\cone C$ such that
\[
\varphi_x(v)\geq \varphi_x(u)-\varepsilon.
\]
Since $v\in\cone C=\bigcup_{\lambda>0}\lambda C$, we have
$v\in\lambda_0 C$ for some $\lambda_0>0$, and hence
\[
\sup_{z\in\lambda C}\varphi_x(z)
\geq \varphi_x(v)
\geq \varphi_x(u)-\varepsilon
\qquad \mbox{for all } \lambda\geq \lambda_0.
\]
This establishes the convergence~\eqref{eqn:convergence-of-suprema}. Now put $u_\lambda=\Proj{\lambda C}(x)$ and $u=\Proj{K}(x)$. The variational
characterization of the projection gives
\[
\varphi_x(u_\lambda)
=
\sup_{z\in\lambda C}\varphi_x(z)
\to 
\sup_{z\in K}\varphi_x(z)
=
\varphi_x(u).
\]
Since $u_\lambda \in K$, the variational inequality for $u=\Proj{K}(x)$ yields
\[
\|u_\lambda-u\|_2^2
\leq
\|x-u_\lambda\|_2^2-\|x-u\|_2^2
=
\varphi_x(u)-\varphi_x(u_\lambda)
\to0.
\]
Thus
\[
\|\Proj{\lambda C}(x)\|_2
\uparrow
\|\Proj{K}(x)\|_2
\qquad \text{as} \quad \lambda \to \infty.
\]
For $1\leq p<\infty$, claim~\ref{item:cone-upper-bound-and-limit}
follows from the monotone convergence theorem applied to
$\|\Proj{\lambda C}(\cdot)\|_2^p$. For $p=\infty$, we use the elementary
fact that if $0\leq f_\lambda\uparrow f$ pointwise, then
$\|f_\lambda\|_{L^\infty(\mu)}\uparrow \|f\|_{L^\infty(\mu)}$.
On the other hand, the same scaling identity gives
\[
\frac{\|\Proj{\lambda C}(x)\|_2}{\lambda}
=
\|\Proj{C}(x/\lambda)\|_2 \uparrow \|\SuppVec{C}(x)\|_2, \quad 
\mbox{as}~\lambda \downarrow 0. 
\]
Here, we applied 
\Cref{lem:pointwise-monotonicity-proj,lem:ray-limit-of-projections}. In particular,
$\|\Proj{\lambda C}(x)\|_2\leq \lambda \|\SuppVec{C}(x)\|_2$ 
for every $\lambda>0$. Applying the same argument to
$f_\lambda(x)=\|\Proj{\lambda C}(x)\|_2/\lambda$ yields
claim~\ref{item:supporting-vector-upper-bound-and-limit}.

\subsection{Proof of~\Cref{prop:upper-bound-via-typical-radius}}
Claim~\ref{item:well-defined-fixed-point} follows from~\cite[Proposition 2.1]{PatZhi26}; hence we focus on establishing claim~\ref{item:upper-bound-on-mean-norm}.
Throughout, we denote by
\[
r_\star = r_\mu(C) \quad \mbox{and} \quad \hat r = 
\hat r(\xi) = \argmax_{r \geq 0} \Big\{\, h_{C \cap r B^n_2}(\xi) - \frac{r^2}{2}\, \Big\}.
\]
The crux of the proof is the following convex-analytic 
statement. 
\begin{lemma}
\label{lem:key-properties}
    There is a measurable map $s \colon \R^n \to \R_+$ such that 
    the following hold.
    \begin{enumerate}[label=(\roman*)]
    \item 
    \label{item:subgradient-inequality}
    For every $r > r_\star$ and $x \in \R^n$, it holds that
    \[
    h_{C \cap r B^n_2}(x) \leq h_{C \cap r_\star B^n_2}(x) + s(x) (r - r_\star).
    \]
    \item
    \label{item:optimality}
    In expectation, $\E s(\xi) \leq r_\star$.
    \end{enumerate}
\end{lemma}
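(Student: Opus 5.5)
Write $\psi_x(r)=h_{C\cap rB^n_2}(x)$ and $\Psi(r)=\E_\mu\psi_\xi(r)$. The plan is to take $s(x)$ to be the right derivative of $\psi_x$ at $r_\star$,
\[
s(x)=\RightDer\psi_x(r_\star)=\lim_{\varepsilon\downarrow 0}\frac{\psi_x(r_\star+\varepsilon)-\psi_x(r_\star)}{\varepsilon}.
\]
By the concavity of $\psi_x$ established in the proof of \Cref{lem:radius-interpretation-of-projection}, this limit exists as a monotone limit of difference quotients. Each quotient is a measurable function of $x$ (indeed continuous, since support functions of compact sets are Lipschitz in $x$), so $s$ is measurable. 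It is also nonnegative, because $\psi_x$ is nondecreasing in $r$.

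Item~\ref{item:subgradient-inequality} is then the supergradient inequality for concave functions. The difference quotient $\frac{\psi_x(r)-\psi_x(r_\star)}{r-r_\star}$ is nonincreasing in $r>r_\star$. Its supremum as $r\downarrow r_\star$ is $s(x)$, which gives $\psi_x(r)\le\psi_x(r_\star)+s(x)(r-r_\star)$. Finiteness of $s(x)$ needs a short check. If $r_\star>0$, concavity gives $s(x)\le(\psi_x(r_\star)-\psi_x(0))/r_\star<\infty$. If $r_\star=0$, the derivative might be $+\infty$ at some points, but the bound $\psi_x(\varepsilon)\le\varepsilon\|x\|_2$ gives $s(x)\le\|x\|_2$.

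For item~\ref{item:optimality}, I use the optimality of $r_\star$ as the maximizer of the concave function $\Psi(r)-r^2/2$. It gives $\Psi(r_\star+\varepsilon)-\Psi(r_\star)\le(r_\star+\varepsilon)^2/2-r_\star^2/2$, so
\[
\E\Big[\frac{\psi_\xi(r_\star+\varepsilon)-\psi_\xi(r_\star)}{\varepsilon}\Big]\le r_\star+\frac{\varepsilon}{2}.
\]
The integrands are nonnegative and nondecreasing as $\varepsilon\downarrow0$, again by concavity. By monotone convergence the left side tends to $\E s(\xi)$, so $\E s(\xi)\le r_\star$.

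\textbf{Main obstacle.} The step I expect to need the most care is justifying these expectations. I need $\Psi(r)<\infty$ for $r$ near $r_\star$, so that the subtraction makes sense. This should follow from the well-definedness hypothesis in \Cref{prop:upper-bound-via-typical-radius}\ref{item:well-defined-fixed-point}. By concavity and $\psi_x(0)=0$, finiteness of $\Psi$ at one $s>0$ gives $\psi_x(r)\le\max\{1,r/s\}\psi_x(s)$, so $\Psi$ is finite everywhere. Monotone convergence (rather than dominated convergence) then handles the limit without needing any integrability of $s$ in advance.
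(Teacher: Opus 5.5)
Your proposal is correct and follows essentially the same route as the paper: you take $s(x)$ to be the right derivative of $r\mapsto h_{C\cap rB^n_2}(x)$ at $r_\star$, get measurability from the continuous difference quotients, derive (i) from concavity, and obtain (ii) from the optimality of $r_\star$ together with monotone convergence. Your extra checks fill in details the paper leaves implicit, namely finiteness of $s(x)$ via $s(x)\le\|x\|_2$ and finiteness of $\Psi$ at every radius from finiteness at a single radius.
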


\begin{proof}
We use the shorthand notation 
\[
\psi_x(r) = h_{C \cap r B^n_2}(x).
\]
By the convexity of $C$, the map $\psi_x$ is concave on $\R_+$ for any $x \in \R^n$. Recall the right derivative
\begin{equation*}
\RightDer \psi_x(r) 
= \lim_{s \downarrow r} \frac{\psi_x(s)-\psi_x(r)}{s - r} = 
\sup_{s > r} \frac{\psi_x(s)-\psi_x(r)}{s - r}.
\end{equation*}
The final equality holds by concavity of $\psi_x$ and the fact that the slopes are nonincreasing; see~\cite[Theorems 0.6.2 and 0.6.3]{HirLem01} for a formal statement. 
Now, we can set 
\[
s(x) = \RightDer \psi_x(r_\star).
\] 
By definition, we obtain Lemma~\ref{lem:key-properties}\ref{item:subgradient-inequality}. 
For the measurability, note that for any positive sequence $h_n \downarrow 0$, we may write
\[
s(x) = \lim_{n \to \infty} s_n(x), 
\quad \mbox{for} \quad s_n(x) = \frac{\psi_x(r_\star + h_n) - \psi_x(r_\star)}{h_n}.
\]
On the other hand, each $s_n$ is measurable: $x \mapsto \psi_{x}(r)$ is continuous since $h_{C\cap rB^n_2}(\cdot)$ is convex. Thus, $s$ is measurable. 
Finally, by the assumption that $r_\mu(C)$ is well-defined, the convexity of $C$ implies that 
$r \mapsto \Psi(r) = \E_\mu h_{C \cap r B^n_2}(\xi)$ is a finite, concave function on $\R_+$. As $r_\star$ maximizes $r \mapsto \Psi(r) - r^2/2$, we conclude
\[
r_\star \geq \RightDer\Psi(r_\star) 
= \lim_{n \to \infty } \E \frac{\psi_\xi(r_\star + h_n) - \psi_{\xi}(r_\star)}{h_n} 
= \lim_{n \to \infty} \E s_n(\xi) = \E s(\xi),
\]
since $s_n \uparrow s$ by concavity;~\Cref{lem:key-properties}\ref{item:optimality} follows.
\end{proof}

We are now in a position to complete the proof of~\Cref{prop:upper-bound-via-typical-radius}.

\begin{proof}[Proof of~\Cref{prop:upper-bound-via-typical-radius}]
Suppose that $\hat r > r_\star$, and set
$\psi(r)=h_{C\cap rB_2^n}(\xi)$. 
Since $\hat r$ maximizes the concave function
$r\mapsto \psi(r)-r^2/2$ and $\hat r>0$, the one-sided optimality
conditions give
\[
\RightDer \psi(\hat r)\leq \hat r\leq \mathsf D_- \psi(\hat r),
\]
where $\mathsf D_\pm \psi$ denotes the left and right derivatives. Since the one-sided
derivatives of a concave function are monotone nonincreasing and
$r_\star<\hat r$, we have
\[
s(\xi)=\RightDer\psi(r_\star)\geq \mathsf D_- \psi(\hat r)\geq \hat r.
\]
Consequently, combining the cases, we obtain
\[
\|\Pi_C(\xi)\|_2 \leq r_\star \1\{\hat r \leq r_\star\} + s(\xi) \1\{\hat r > r_\star\}. 
\]
Taking expectations and applying~\Cref{lem:key-properties}\ref{item:optimality} yields the claim.
\end{proof}

\subsection{Proof of~\Cref{thm:distance-from-typical-radius}}
\label{sec:distance-from-typical-radius}

In fact, we prove a more general result for Orlicz norms, which we now recall. 

\begin{definition}[Orlicz norm]
    Let $\psi \colon \R_+ \to \R_+$ be an Orlicz function (\ie a convex, increasing function with $\psi(0) = 0$ and $\lim_{x \to \infty} \psi(x) = \infty$). Then, the \emph{Orlicz norm of the random variable $X$} is given by
    \[
    \|X\|_\psi = \inf \Big\{\, t > 0 : \E \psi(|X|/t) \leq 1\,\Big\}.
    \]
\end{definition}

If $X \sim \mu$ and $f \colon \R^n \to \R$ is measurable, we write 
\[
\|f\|_{\psi(\mu)} = \|f(X)\|_{\psi} = 
\inf\Big\{t > 0 : \E_\mu \psi(|f(X)|/t) \leq 1\,\Big\}.
\]
We can also consider the Orlicz constant for convex, Lipschitz functionals: 
\[
K_\psi(\mu) = \sup\Big\{\,\|f - \E f\|_{\psi(\mu)} \mid 
f \colon \R^n \to \R,~\text{convex, $1$-Lipschitz}\,\Big\}.
\]
Throughout this section, we use the shorthand notation
\[
\begin{gathered}
\kappa=K_\psi(\mu),\quad r_\star=r_\mu(C),
\quad
\hat r=\|\Proj{C}(\xi)\|_2,
\quad 
Z_r=h_{C \cap r B^n_2}(\xi), \\
\phi_\xi(r)=h_{C \cap r B^n_2}(\xi)-\frac{r^2}{2},\qquad
\Phi(r)=\E Z_r-\frac{r^2}{2}.
\end{gathered}
\]
\begin{lemma}
\label{lem:tail-bounds-for-projection-norm}
Suppose that $C \in \AllConvSets{n}$ and that $\mu$ is a probability measure on $\R^n$ such that $r_\mu(C)$ and $K_\psi(\mu)$ are finite. Then, there exist universal constants $c_1, c_2 > 0$ such that for every $s \geq 1$,
\[
\P\Big\{|\hat r - r_\mu(C)| > A_\psi(C, \mu) \, s\Big\} \leq \frac{c_1}{\psi(c_2 s)},
\]
where 
$A_\psi(C, \mu) = \max\Big\{K_\psi(\mu), \sqrt{K_\psi(\mu) r_\mu(C)}\Big\}$. 
\end{lemma}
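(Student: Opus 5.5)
We combine the pointwise strong concavity of \Cref{lem:radius-interpretation-of-projection} with concentration of the convex, Lipschitz random variables $Z_r$. By \Cref{lem:radius-interpretation-of-projection}, $\phi_\xi$ is $1$-strongly concave and maximized at $\hat r$. Hence, for every $r \geq 0$,
\[
\phi_\xi(\hat r) - \phi_\xi(r) \geq \tfrac12 (\hat r - r)^2 .
\]
In the same way, $\Phi$ is $1$-strongly concave (it is an average of strongly concave functions) and is maximized at $r_\star$.

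Fix $t > 0$ and first consider the event $\{\hat r > r_\star + t\}$. By concavity of $\phi_\xi$, if $\hat r > r_\star + t$ then $\phi_\xi(r_\star + t) \geq \phi_\xi(r_\star)$, since $r_\star + t$ lies between $r_\star$ and the maximizer $\hat r$. Strong concavity improves this to
\[
\phi_\xi(r_\star+t)-\phi_\xi(r_\star)\geq \tfrac12 t^2 .
\]
We compare with the mean. Optimality of $r_\star$ together with strong concavity of $\Phi$ gives
\[
\Phi(r_\star+t)-\Phi(r_\star)\leq -\tfrac12 t^2 .
\]
Subtracting, the event forces
\[
D_t := (Z_{r_\star+t}-Z_{r_\star})-\E(Z_{r_\star+t}-Z_{r_\star})\geq t^2 .
\]
The key point is that $x\mapsto h_{C\cap(r_\star+t)B}(x)-h_{C\cap r_\star B}(x)$ is not convex, so the convex Orlicz bound cannot be applied to $D_t$ directly. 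Instead we bound each term separately: $Z_r - \E Z_r$ comes from the convex, $r$-Lipschitz function $h_{C\cap rB}$, so $\|Z_r-\E Z_r\|_\psi\leq r\kappa$. This gives $\|D_t\|_\psi\lesssim (r_\star+t)\kappa$, and Markov's inequality for $\psi$ yields
\[
\P\{D_t\geq t^2\}\leq 1/\psi\big(c\, t^2/((r_\star+t)\kappa)\big).
\]
Take $t = A s$ with $A = A_\psi(C,\mu)$. Since $A\geq\kappa$ and $A^2\geq \kappa r_\star$, for $s \geq 1$ we have
\[
\frac{t^2}{(r_\star+t)\kappa} \gtrsim \min\Big\{\frac{t^2}{r_\star\kappa},\frac{t}{\kappa}\Big\}\geq \min\{s^2,s\}=s .
\]
This is the claimed tail bound on this side.

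For the lower deviation $\{\hat r< r_\star - t\}$ (nonempty only when $t<r_\star$), we argue symmetrically. Strong concavity gives
\[
\phi_\xi(r_\star-t)-\phi_\xi(r_\star)\geq \tfrac12 t^2,\qquad \Phi(r_\star-t)-\Phi(r_\star)\leq-\tfrac12t^2,
\]
so the centered version of $Z_{r_\star-t}-Z_{r_\star}$ is at least $t^2$. The same estimate applies, now with Lipschitz constants at most $r_\star$, giving the bound $1/\psi(c\,t^2/(r_\star\kappa))\leq 1/\psi(cs^2)\leq 1/\psi(cs)$. We take a union bound over the two sides and absorb constants, using convexity of $\psi$ together with $\psi(0)=0$ so that $\psi(cs)\geq c\,\psi(s)$ for $c\leq1$. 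We also use that $\|X+Y\|_\psi\leq\|X\|_\psi+\|Y\|_\psi$, so the Markov step reads $\P\{|X|\geq u\}\leq 1/\psi(u/\|X\|_\psi)$.

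The main obstacle is whether the separate control of $Z_{r_\star+t}$ and $Z_{r_\star}$ loses too much. It costs a factor $r_\star+t$ rather than $t$, and the computation above suggests this is exactly what produces the $\sqrt{\kappa r_\star}$ term in $A_\psi$, so the loss is acceptable. A secondary point is ensuring $\E Z_r<\infty$ for all $r$. This follows from well-definedness of $r_\star$ via \Cref{prop:upper-bound-via-typical-radius}\ref{item:well-defined-fixed-point} and concavity of $r\mapsto h_{C\cap rB}$.
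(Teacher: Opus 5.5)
Your proposal is correct and follows essentially the same route as the paper. On each one-sided deviation event you compare $\phi_\xi$ and $\Phi$ at $r_\star$ and $r_\star \pm t$, bound the Orlicz norm of the difference of the two centered support-function variables by the triangle inequality (giving $\kappa(2r_\star+t)$), apply Markov's inequality for $\psi$, and substitute $t = A_\psi(C,\mu)\, s$. The only difference is that you also use strong concavity of $\phi_\xi$, whereas the paper uses only $\phi_\xi(r_\star\pm t)\geq\phi_\xi(r_\star)$; this gives a threshold of $t^2$ instead of $t^2/2$ and changes only the constants.
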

\begin{proof}
Recall that by \Cref{lem:radius-interpretation-of-projection}, $\hat r$ uniquely maximizes the $1$-strongly concave function $r \mapsto \phi_\xi(r)$ on $\R_+$. Hence, 
if $t > r_\star$, then on the event 
$\{\hat r \geq t\}$, the concave function
$r\mapsto \phi_\xi(r)$ is nondecreasing on $[0,t]$, and therefore
$\phi_\xi(t)\geq \phi_\xi(r_\star)$. Therefore,
\begin{multline}
\label{eqn:key-inequality-on-centered-support}
(Z_t-\E Z_t)-(Z_{r_\star}-\E Z_{r_\star}) \\= 
(\phi_\xi(t) - \phi_{\xi}(r_\star)) 
- (\Phi(t) - \Phi(r_\star)) 
\geq
\Phi(r_\star)-\Phi(t)
\geq
\frac{(t-r_\star)^2}{2},
\end{multline}
since $\Phi$ is $1$-strongly concave, where we again applied \Cref{lem:radius-interpretation-of-projection}. 
The map $\xi \mapsto h_{C \cap rB^n_2}(\xi) \equiv Z_r$ is convex and $r$-Lipschitz. Hence, for any $s, t > 0$,
\[
\|(Z_t-\E Z_t)-(Z_{s}-\E Z_{s})\|_{\psi(\mu)} 
\leq \kappa (s+t).\] 
Therefore, by Markov's inequality 
\begin{multline*}
\P\{\hat r \geq t\} 
\leq 
\P\Big\{(Z_t-\E Z_t)-(Z_{r_\star}-\E Z_{r_\star}) 
\geq \frac{(t- r_\star)^2}{2} \Big\}
\leq \frac{1}{\psi\Big(\frac{1}{2}\frac{(t-r_\star)^2}{\kappa (t+r_\star)}\Big)}.
\end{multline*}
On the other hand, if $t < r_\star$, then note that on $\{\hat r \leq t<r_\star\}$, the map 
$r \mapsto \phi_\xi(r)$ is nonincreasing on $[t, r_\star]$ and hence $\phi_\xi(t) \geq \phi_\xi(r_\star)$, and again inequality~\eqref{eqn:key-inequality-on-centered-support} holds.
Markov's inequality implies
\[
\P\{\hat r \leq t\} 
\leq 
\P\Big\{(Z_t-\E Z_t)-(Z_{r_\star}-\E Z_{r_\star}) 
\geq \frac{(t- r_\star)^2}{2} \Big\}
\leq \frac{1}{\psi\Big(\frac{1}{2}\frac{(t-r_\star)^2}{\kappa(t+r_\star)}\Big)}.
\]
We can write $t = r_\star \pm s$ in the above cases. We obtain 
\[
\P\{|\hat r - r_\star| > s\} 
\leq \frac{2}{\psi\Big(\frac{1}{2}\frac{s^2}{\kappa (2 r_\star + s)}\Big)} \leq \frac{2}{\psi\Big(\frac{1}{8}\frac{s^2}{\kappa \max\{r_\star, s\}}\Big)}. 
\]
Take $s = A_{\psi}(C, \mu) s' = \max\{\kappa, \sqrt{r_\star \kappa}\} s'$ for some $s' \geq 1$. Then 
\[
\frac{s^2}{\kappa \max\{r_\star, s\}}
= \frac{\max\{\kappa, r_\star \}}{\max\{r_\star, \kappa s', \sqrt{r_\star \kappa} s'\}} (s')^2 \geq s'.
\]
Hence, 
\[
\P\{|\hat r - r_\star| > A_{\psi}(C, \mu) s'\} \leq \frac{2}{\psi(s'/8)},
\]
as required; we may take $c_1 = 2, c_2 = 1/8$. 
\end{proof}

We are now in a position to complete the proof of~\Cref{thm:distance-from-typical-radius}. 

\begin{proof}[Proof of~\Cref{thm:distance-from-typical-radius}]
From the tail bounds in~\Cref{lem:tail-bounds-for-projection-norm}, note that if $\psi(t) = t^2$, then $\kappa = K_\psi(\mu) = \CvxLip{\mu}$. We have, in this case, 
\[
\P\Big\{|\hat r - r_\star| > \max\{\kappa, \sqrt{r_\star \kappa}\} 
s\Big\}
\lesssim \frac{1}{s^2}, \quad s \geq 1.
\]
In particular, we obtain
\begin{align}
\E |\hat r - r_\star| &= \int_0^\infty 
\P\{|\hat r - r_\star| > s\} \, \ud s \nonumber \\  
&\lesssim  
\max\{\kappa, \sqrt{r_\star \kappa}\} + 
\max\{\kappa, \sqrt{r_\star \kappa}\} \int_{1}^\infty  \frac{1}{s^2} \, \ud s  \nonumber \\ 
&\asymp \max\{\kappa, \sqrt{r_\star \kappa}\}.
\label{ineq:bound-on-expected-difference}
\end{align} 
Consequently, from the bound~\eqref{ineq:bound-on-expected-difference} and~\Cref{prop:upper-bound-via-typical-radius}\ref{item:upper-bound-on-mean-norm}, we have 
\[
\E |\hat r-r_\star|
\lesssim 
\min\Big\{r_\star, \max\{\kappa, \sqrt{r_\star \kappa}\}\Big\} 
\asymp 
\min\{r_\star, \sqrt{r_\star \kappa}\},
\]
as claimed.
\end{proof}

\section{Additional remarks}
\label{sec:additional-remarks}

In this section, we discuss our main results in greater detail.

\begin{remark}[Near extremality of supporting vectors]
The proof of the lower bounds in~\Cref{thm:optimal-reverse-holder-sphere,thm:optimal-reverse-holder-Gauss} for $q \in (p, p \log(\e n/p)]$ made use of a 
reduction to the supporting vectors of convex sets. It turns out that for $q$ much larger than $p$, the supporting vectors are insufficient to match the upper bound by a logarithmic factor in the dimension $n$. 
Indeed, for $1 \leq p \leq q \leq n$, 
from~\Cref{thm:optimal-reverse-holder-sphere} and~\Cref{prop:extremality-of-supporting-vectors} (given below),  
\[
\frac{\ExtProbProj{p}{q}{\sigma_{n-1}}}{\ExtProbSuppVec{p}{q}{\sigma_{n-1}}} 
\asymp \bigg(\frac{\log(\e n/p)}{\log(\e n/q)}\bigg)^{\tfrac{1}{2}(1 - \tfrac{p}{q})} \lesssim \sqrt{\log n}.
\]
(The final inequality is attained with $p = 1$ and $q = n$.)
In other words, up to a logarithmic factor in the dimension, the lack of dimension-free reverse Hölder inequalities for projections can be explained by the heavy-tailed nature of the supporting vectors themselves. 
\end{remark}

\begin{proposition}[Solution to extremal problem for supporting vectors]
\label{prop:extremality-of-supporting-vectors}
For $n \geq 1$ and $1 \leq p < q \leq \infty$, it holds that
\[
\ExtProbSuppVec{p}{q}{\gamma_{n}} 
=
\ExtProbSuppVec{p}{q}{\sigma_{n-1}} 
\simeq
\bigg(\frac{n}{(\twomin{p}{n})\log(\e \tfrac{n}{\twomin{p}{n}})}\bigg)^\tau,
\quad \mbox{where}~~\tau = \frac{1}{2}\Big(1 -
\frac{p}{q}\Big).
\]
\end{proposition}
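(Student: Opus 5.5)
The plan has three parts: reduce the Gaussian problem to the spherical one, take the lower bound from \Cref{prop:auxiliary-problem}, and prove the upper bound by Hölder interpolation combined with a lower bound on $\|\SuppVec{C}\|_{L^p(\sigma_{n-1})}$ in terms of $\rad_2(C)$.

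\emph{Equality of the Gaussian and spherical problems.} From \Cref{def:minimal-norm-supp-vec}, $\SuppVec{C}(\lambda x)=\SuppVec{C}(x)$ for every $\lambda>0$. Indeed, $h_C(\lambda x)=\lambda h_C(x)$, so the face $F_{\lambda x}$ equals $F_x$. Writing $G=R\theta$ with $\theta=G/\|G\|_2\sim\sigma_{n-1}$, it follows that $\|\SuppVec{C}(G)\|_2=\|\SuppVec{C}(\theta)\|_2$ almost surely. Hence every $L^r(\gamma_n)$ norm of $\SuppVec{C}$ equals the corresponding $L^r(\sigma_{n-1})$ norm, for every bounded $C$. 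Taking suprema gives $\ExtProbSuppVec{p}{q}{\gamma_n}=\ExtProbSuppVec{p}{q}{\sigma_{n-1}}$.

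\emph{Lower bound.} This is exactly \Cref{prop:auxiliary-problem}\ref{item:lower-bound-on-spherical}.

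\emph{Upper bound, main case $n\ge 3$ and $p\le n$.} Fix a bounded $C\in\AllConvSets{n}$ with $C\neq\{0\}$, and put $\rho=\rad_2(C)$ and $\delta=1-p/q=2\tau$. Since $\SuppVec{C}(\theta)\in C$, we have $\|\SuppVec{C}\|_{L^\infty}\le\rho$. Hölder interpolation then gives
\[
\|\SuppVec{C}\|_{L^q(\sigma_{n-1})}\le \rho^{\delta}\,\|\SuppVec{C}\|_{L^p(\sigma_{n-1})}^{1-\delta}.
\]
It therefore suffices to prove
\[
\|\SuppVec{C}\|_{L^p(\sigma_{n-1})}\gtrsim \rho\sqrt{\tfrac{p}{n}\log\tfrac{\e n}{p}}.
\]
For this we use \Cref{prop:qualitative-behavior-of-metric-projections}\ref{item:supporting-vector-upper-bound-and-limit}, which gives $\|\SuppVec{C}\|_{L^p}=\lim_{\lambda\to0^+}\lambda^{-1}\|\Proj{\lambda C}\|_{L^p}$. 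For $\lambda\le 1/\rho$ we have $\lambda C\subset B^n_2$, so \Cref{thm:radius-p-moment} applies with $\rad_2(\lambda C)=\lambda\rho$:
\[
\lambda^{-1}\|\Proj{\lambda C}\|_{L^p}\gtrsim \rho\sqrt{\tfrac pn\min\{\log\tfrac{\e n}{p},\log\tfrac{\e}{\lambda\rho}\}}.
\]
Once $\lambda\rho\le p/n$, the minimum equals $\log(\e n/p)$, uniformly in small $\lambda$. Letting $\lambda\to0^+$ yields the desired lower bound. For $q=\infty$ the interpolation step reads $\|\SuppVec{C}\|_{L^\infty}\le\rho$ directly, so the same argument applies.

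\emph{Degenerate ranges.} When $p\ge n$, or $n\le 2$, the target is of order $1$ (or a constant). Here we need the cruder bound $\|\SuppVec{C}\|_{L^p}\gtrsim\rho\sqrt{(\twomin{p}{n})/n}$. It follows from $\|\SuppVec{C}(\theta)\|_2\ge h_C(\theta)\ge\rho\langle\theta,u\rangle_+$, where $u$ attains the outradius, together with \Cref{lem:coordinate-moments}. Interpolating as before gives a ratio $\lesssim (n/\twomin{p}{n})^{\tau}$, which is $O(1)$ in these ranges.

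The only delicate point is the limit interchange in the main case, namely that the bound from \Cref{thm:radius-p-moment} is uniform as $\lambda\to0$. This is immediate because the logarithmic minimum stabilises at $\log(\e n/p)$. So I expect no real obstacle; the substance is already contained in \Cref{thm:radius-p-moment}.
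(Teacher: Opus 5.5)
Your proposal is correct and follows the paper's proof essentially step for step. You use $0$-homogeneity of $x\mapsto\SuppVec{C}(x)$ for the Gauss/sphere equality and \Cref{prop:auxiliary-problem} for the lower bound. For the upper bound you take the $\lambda\to0^+$ limit of \Cref{thm:radius-p-moment}, where the logarithmic minimum stabilises at $\log(\e n/p)$, and then apply Hölder interpolation against $\|\SuppVec{C}\|_{L^\infty}\le\rad_2(C)$.

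Your deviations are cosmetic. You use the direct bound $\|\SuppVec{C}(\theta)\|_2\ge h_C(\theta)$ in place of \Cref{lem:support-lower-bound} for $p\ge n$. You also treat $n\le 2$ explicitly, which the paper leaves implicit even though \Cref{thm:radius-p-moment} needs $n\ge 3$.
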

\begin{proof}
The first relation follows from the $0$-positive homogeneity of the supporting vector map $x\mapsto \|\SuppVec{C}\|_2$. Recalling the limit relation~\eqref{eqn:limit-relation-for-supporting-vector} for the supporting vectors, we obtain for any $C \in \AllConvSets{n}$ with $C \subset B^n_2$, and any $1 \leq p < \infty$, 
\[
\|\SuppVec{C}\|_{L^p(\sigma_{n-1})} = 
\lim_{\alpha \to 0^+} \alpha^{-1} 
\|\Pi_{\alpha C}\|_{L^p(\sigma_{n-1})} 
\gtrsim \rad_2(C) \,  
\sqrt{\frac{(\twomin{p}{n}) \log(\tfrac{\e n}{\twomin{p}{n}})}{n}}.
\]   
The final inequality follows by combining 
the bounds 
from~\Cref{thm:radius-p-moment} for $p \leq n$ and from~\Cref{lem:support-lower-bound} for $p \geq n$.
Note that $\SuppVec{\alpha C}(\theta)=\alpha\, \SuppVec{C}(\theta)$ for any
$\alpha>0$. The case $C=\{0\}$ is trivial; otherwise, for compact
$C\in\AllConvSets{n}$, set $C'=\rad_2(C)^{-1}C$.
Then $C'\subset B_2^n$, $\rad_2(C')=1$, and
$\SuppVec{C'}(\theta)=\rad_2(C)^{-1}\SuppVec{C}(\theta)$. Applying the inequality above
to $C'$ gives
\begin{align}
\|\SuppVec{C}\|_{L^p(\sigma_{n-1})}
&= \rad_2(C)\,\|\SuppVec{C'}\|_{L^p(\sigma_{n-1})} \nonumber \\
&\gtrsim \rad_2(C)
\sqrt{\frac{(\twomin{p}{n}) \log(\tfrac{\e n}{\twomin{p}{n}})}{n}}
\nonumber \\
&\geq \|\SuppVec{C}\|_{L^\infty(\sigma_{n-1})}
\sqrt{\frac{(\twomin{p}{n}) \log(\tfrac{\e n}{\twomin{p}{n}})}{n}}.
\label{ineq:lower-bound-on-support-l-inf}
\end{align}
The final inequality used
$\|\SuppVec{C}(\theta)\|_2\leq \rad_2(C)$ for all $\theta \in \bfS^{n-1}$. 
Hölder interpolation gives for $1 \leq p < q < \infty$, and any compact $C \in \AllConvSets{n}$ with $C \neq \{0\}$, 
\[
\frac{\|\SuppVec{C}\|_{L^q(\sigma_{n-1})}}{\|\SuppVec{C}\|_{L^p(\sigma_{n-1})}}
\leq 
\bigg(\frac{\|\SuppVec{C}\|_{L^\infty(\sigma_{n-1})}}{\|\SuppVec{C}\|_{L^p(\sigma_{n-1})}}\bigg)^{1-p/q} \lesssim \bigg(\frac{n}{(\twomin{p}{n}) \log(\tfrac{\e n}{\twomin{p}{n}})}\bigg)^{\tfrac{1}{2}(1-\tfrac{p}{q})}.
\] 
Note that from inequality~\eqref{ineq:lower-bound-on-support-l-inf}, the same bound continues to hold if $q = \infty$. Passing to the supremum over such sets $C$ and combining with the lower bound from \Cref{prop:auxiliary-problem}\ref{item:lower-bound-on-spherical}, we obtain the result. 
\end{proof}

Based on the proof of the lower bounds, the extremizers
underlying~\Cref{thm:optimal-reverse-holder-sphere,thm:optimal-reverse-holder-Gauss}, up to universal constants, are realized
within the family of convex sets
(see~\Cref{sec:lower-bound-in-the-spherical-case}):
\[
C(a, b) = b \conv(aB^n_2,
e_1)\quad \mbox{for particular choices of}~a, b > 0.
\]
It is tempting to conjecture that
the reason for large $\ExtProbProj{p}{q}{\mu}$ is the ``cone-like'' nature of
these sets. However, this intuition is incorrect.

\begin{remark}[Dimension-free inequalities for conic projections]
Let $\AllConvCones{n}$ denote the class
of
closed convex cones in $\R^n$ and define for 
a probability measure $\mu$ on $\R^n$,
\[
\widetilde{\cA}^\star_{p,q}(\mu) = \sup_{K \in \AllConvCones{n}, K \neq \{0\}}
\frac{\|\Proj{K}\|_{L^q(\mu)}}{\|\Proj{K}\|_{L^p(\mu)}}, \qquad \mbox{for}~1
\leq p < q \leq \infty.
\]
It is straightforward to check that for $K \in \AllConvCones{n}$, the map $x \mapsto
\|\Proj{K}(x)\|_2$ is a
$1$-positively homogeneous convex function on $\R^n$; in fact, this precisely
characterizes when $K \in \AllConvSets{n}$ is a 
cone. Therefore, by the Borell lemma, for an even, log-concave measure $\mu$ on $\R^n$, it follows that 
\begin{equation}
\ExtProbProjCone{p}{q}{\mu} \lesssim \frac{q}{p}, 
\label{eqn:dimension-free-inequality-for-log-concave-measure-and-cones}
\end{equation}
for $1 \leq p \leq q < \infty$.
\end{remark}

The dimension-free nature of the
inequality~\eqref{eqn:dimension-free-inequality-for-log-concave-measure-and-cones} should be 
contrasted with~\Cref{thm:optimal-reverse-holder-Gauss}, where, for Gaussian
measure, the optimal
inequality for general convex sets generally exhibits dimension
dependence. In fact, in the case of spherical or Gaussian measure, it is
possible to give an \emph{exact} solution to the underlying extremal problem
when restricted to convex cones.

\begin{remark}[Exact conic extremizers on the sphere or in Gauss space]
\label{remark:cone}
First, observe that by
$1$-homogeneity and
integration in polar coordinates, for $1 \leq p < q < \infty$ and any $n
\geq 1$, it holds that
\begin{equation}
\label{eqn:rescaling-for-the-sphere}
\ExtProbProjCone{p}{q}{\gamma_n}= \frac{\|\chi_n\|_{L^q}}{\|\chi_n\|_{L^p}}
\,
\ExtProbProjCone{p}{q}{\sigma_{n-1}}\asymp \sqrt\frac{\twomax{q}{n}}{\twomax{p}{n}}\,
\ExtProbProjCone{p}{q}{\sigma_{n-1}},
\end{equation}
by~\Cref{lem:chi-n-concentration}. Moreover, the relation above shows that the extremizers for any triple $(n, p, q)$ coincide on the sphere and in Gauss space. In fact, as shown below in~\Cref{thm:extremal-problem-cones}, we have 
\[
\ExtProbProjCone{p}{q}{\gamma_n} \asymp \sqrt{\frac{q}{p}}, \quad \mbox{and}
\quad
\ExtProbProjCone{p}{q}{\sigma_{n-1}} \asymp \sqrt{\frac{q/p}{(\twomax{q}{n})/(\twomax{p}{n})}},
\]
where a one-dimensional ray is exactly extremal.
\end{remark}

\begin{theorem}[Exact solution to the extremal problem for cones]
\label{thm:extremal-problem-cones}
Let $n\geq 1$ and $1\leq p<q<\infty$. Then
\[
\ExtProbProjCone{p}{q}{\gamma_n}
=
2^{1/p-1/q}
\frac{\|Z\|_{L^q}}{\|Z\|_{L^p}},
\]
where $Z\sim N(0,1)$. Consequently,
\[
\ExtProbProjCone{p}{q}{\sigma_{n-1}}
=
2^{1/p-1/q}
\frac{\|Z\|_{L^q}/\|Z\|_{L^p}}
{\|\chi_n\|_{L^q}/\|\chi_n\|_{L^p}}.
\]
Moreover, both suprema are attained by the ray
\[
K_n^\star=\{\lambda e_1:\lambda\geq 0\}.
\]
\end{theorem}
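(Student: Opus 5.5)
By $1$-positive homogeneity of $x\mapsto\|\Proj{K}(x)\|_2$ for $K\in\AllConvCones{n}$, integrating in polar coordinates ($G=R\theta$ with $R\sim\chi_n$ independent of $\theta\sim\sigma_{n-1}$) gives, for every $1\le r<\infty$,
\[
\|\Proj{K}\|_{L^r(\gamma_n)}=\|\chi_n\|_{L^r}\,\|\Proj{K}\|_{L^r(\sigma_{n-1})}.
\]
Hence $\ExtProbProjCone{p}{q}{\gamma_n}$ and $\ExtProbProjCone{p}{q}{\sigma_{n-1}}$ differ by the fixed factor $\|\chi_n\|_{L^q}/\|\chi_n\|_{L^p}$, and they have the same extremizers. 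So it suffices to prove the Gaussian statement, namely
\[
\frac{\|\Proj{K}(G)\|_{L^q}}{\|\Proj{K}(G)\|_{L^p}}\le 2^{1/p-1/q}\frac{\|Z\|_{L^q}}{\|Z\|_{L^p}}
\]
for every nonzero closed convex cone $K$. For the ray $K_n^\star$ we have $\|\Proj{K_n^\star}(G)\|_2=(G_1)_+$, so $\E(G_1)_+^r=\tfrac12\E|Z|^r$, and the ray attains exactly this value.

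For the upper bound, I plan to use the chi-bar-squared representation from conic integral geometry. There are conic intrinsic volumes $v_0,\dots,v_n\ge0$ with $\sum_k v_k=1$ such that $\|\Proj{K}(G)\|_2^2$ is distributed as the mixture $\sum_k v_k\,\chi_k^2$, where $\chi_0\equiv 0$. This gives
\[
\E\|\Proj{K}(G)\|_2^r=(1-v_0)\,\E X^r,
\]
where $X=\chi_\kappa$ and $\kappa\ge1$ is drawn from the law $v_k/(1-v_0)$ independently of the chi variables. Next, $v_0=\P\{G\in K^\circ\}$. Since $K\neq\{0\}$, the polar cone $K^\circ$ lies in a closed half-space, so $v_0\le\tfrac12$. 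The ratio therefore splits as
\[
(1-v_0)^{1/q-1/p}\cdot\frac{\|X\|_{L^q}}{\|X\|_{L^p}}.
\]
Because $1/q-1/p<0$ and $1-v_0\ge\tfrac12$, the first factor is at most $2^{1/p-1/q}$, with equality when $v_0=\tfrac12$, as for the ray.

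The main obstacle is to show $\|X\|_{L^q}/\|X\|_{L^p}\le\|Z\|_{L^q}/\|Z\|_{L^p}$ for any mixture $X$ of $\chi_k$ with $k\ge1$. For a single $k$, I would use the beta--gamma algebra. If $B\sim\mathrm{Beta}(\tfrac12,\tfrac{k-1}2)$ is independent of $\chi_k$, then $Z^2\overset{d}{=}\chi_k^2B$. Hence $\E|Z|^r=\E\chi_k^r\,\E B^{r/2}$, and the claim reduces to $\|B^{1/2}\|_{L^q}\ge\|B^{1/2}\|_{L^p}$, which is Lyapunov's inequality once I rewrite the ratio in the form $\big(\E B^{q/2}\big)^{1/q}\big/\big(\E B^{p/2}\big)^{1/p}\ge1$. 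This last step needs care, since $\E B^{r/2}$ enters with different powers; I expect to need the monotonicity of $r\mapsto \frac1r\log \E B^{r/2}$, which follows from $\E B^{r/2}\le1$ together with log-convexity.

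For mixtures, the beta variable depends on $k$, so I would first try to prove the stronger fact that $r\mapsto \frac1r\log\big(\E X^r/\E|Z|^r\big)$ is nonincreasing. Writing $\E X^r=\sum_k w_k\,\E|Z|^r\,M_k(r)$ with $M_k(r)=1/\E B_k^{r/2}\ge1$, the question becomes whether an average of the nondecreasing, log-convex functions $M_k$, normalized to equal $1$ at $r=0$, still has nonincreasing $r^{-1}\log$. If that fails, I would try instead to couple all the $\chi_k$ through a single Gaussian vector and argue pointwise.
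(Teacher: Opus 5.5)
Your polar-coordinate reduction, the computation for the ray, the splitting via the chi-bar-squared law, and the bound $v_0=\P\{G\in K^\circ\}\le\tfrac12$ are all correct. The single-$k$ beta--gamma argument is also correct; it is just Lyapunov's inequality for $B^{1/2}$. The proof breaks at the step you flag as the main obstacle.

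The inequality $\|X\|_{L^q}/\|X\|_{L^p}\le\|Z\|_{L^q}/\|Z\|_{L^p}$ is false for general mixtures of $\chi_k$ with $k\ge1$, and so is the stronger monotonicity of $r^{-1}\log(\E X^r/\E|Z|^r)$. For instance, take $p=1$, $q=2$, and let $X$ be $\chi_1=|Z|$ with probability $1-1/m$ and $\chi_m$ with probability $1/m$. Then $\E X^2=2-1/m$ and $(1-1/m)\sqrt{2/\pi}\le\E X\le\sqrt{2/\pi}+m^{-1/2}$, so $\|X\|_{L^2}/\|X\|_{L^1}\to\sqrt{\pi}$, whereas $\|Z\|_{L^2}/\|Z\|_{L^1}=\sqrt{\pi/2}$. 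The single-$k$ property does not survive averaging. Note also that each $M_k$ is log-\emph{concave}, not log-convex, and that is what gives the property for a single $k$.

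Consequently, no argument that uses only the fact that the law is a $\chi$-mixture with $k\ge1$ can close the proof, and that includes a pointwise coupling of the $\chi_k$. You would need constraints specific to conic intrinsic volumes, and the proposal supplies none. Moreover, your splitting discards the slack $(1-v_0)^{1/q-1/p}<2^{1/p-1/q}$ that is present whenever $v_0<\tfrac12$. So it is not even clear that the conditional inequality holds for every cone.

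The missing idea is to work with the functional $x\mapsto\|\Proj{K}(x)\|_2$ itself rather than its law. By Moreau's decomposition it equals $h_C(x)$ with $C=K\cap B^n_2$, which is nonnegative and sublinear but not even. The paper symmetrizes it. The function $N_p(x)=\big(\tfrac12(h_C(x)^p+h_C(-x)^p)\big)^{1/p}$ is a seminorm. Applying $a^q+b^q\le(a^p+b^p)^{q/p}$ with $a=h_C(G)$, $b=h_C(-G)$ gives $\|h_C(G)\|_{L^q}\le 2^{1/p-1/q}\|N_p(G)\|_{L^q}$, with equality $\|h_C(G)\|_{L^p}=\|N_p(G)\|_{L^p}$ at exponent $p$.

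The sharp Gaussian Kahane--Khinchine inequality for seminorms (Lata{\l}a--Oleszkiewicz, attributed to Szarek) then gives $\|N_p(G)\|_{L^q}/\|N_p(G)\|_{L^p}\le\|Z\|_{L^q}/\|Z\|_{L^p}$; its extremal case is one-dimensional. So the factor $2^{1/p-1/q}$ comes from symmetrization, not from $v_0\le\tfrac12$. Your beta--gamma computation is exactly the Euclidean-norm case of that sharp inequality. It applies because $N_p(G)$ is a genuine seminorm of a Gaussian vector, which a general $\chi$-mixture is not.
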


Before giving the proof, we note that a limiting argument applied to~\Cref{thm:extremal-problem-cones} yields 
\[
\ExtProbProjCone{p}{\infty}{\sigma_{n-1}} = 2^{1/p}
\frac{\|\chi_n\|_{L^p}}{\|Z\|_{L^p}} \asymp 
\begin{cases}
\sqrt{n/p}, & \mbox{if}~p \leq n \\ 
1, & \mbox{if}~p \geq n
\end{cases},
\]
if $1 \leq p < \infty$. The last relation above follows from~\Cref{lem:chi-n-concentration}.

\begin{proof}
We first prove the result in Gauss space.
Let $K\in \AllConvCones{n}$, $K\neq\{0\}$. 
Take $C=K\cap B_2^n$.
For every $x\in \R^n$, Moreau decomposition gives
\begin{equation}
\label{eqn:projection-formula}    
h_C(x)
=
\sup_{y\in K\cap B_2^n}\langle x,y\rangle
=
\|\Proj{K}(x)\|_2.
\end{equation}
Let $\check C_p$, for $p \geq 1$, denote the $p$th-symmetral, \ie the centrally symmetric convex set whose support function satisfies
\[
h_{\check C_p}(x) = \Big(\frac{h_{C}(x)^p + h_{C}(-x)^p}{2}\Big)^{1/p}, 
\quad \mbox{for any}~x \in \R^n.
\]
Note that $a^r + b^r \leq (a^p + b^p)^{r/p}$ for any $r \geq p \geq 1$ and
any $a, b \geq 0$, with equality if $r = p$. This implies that with $G \sim \gamma_n$,
\begin{align*}
\E h^r_{C}(G) &= \half \E \big[h^r_C(-G) + h^r_C(G)\big] \\ 
&\leq \half \E \Big[ \big(h^p_C(-G) + h^p_C(G)\Big)^{r/p} \Big]
= 2^{r/p - 1}\E h_{\check C_p}^r(G), 
\end{align*}
with equality if $r = p$. Equivalently, for every $r \geq p \geq 1$,
\[
\|h_C\|_{L^r(\gamma_n)} \leq 2^{1/p - 1/r} \|h_{\check C_p}\|_{L^r(\gamma_n)},
\]
with equality if $r = p$. 
Hence, for $q > p \geq 1$ with $q$ finite, 
\begin{subequations}
\begin{equation}
\label{eqn:upper-inequality-for-a-cone}
\frac{\|\Proj{K}\|_{L^q(\gamma_n)}}{\|\Proj{K}\|_{L^p(\gamma_n)}}
= 
\frac{\|h_C\|_{L^q(\gamma_n)}}{\|h_C\|_{L^p(\gamma_n)}}
\leq 2^{1/p - 1/q}
\frac{\|h_{\check C_p}\|_{L^q(\gamma_n)}}{\|h_{\check C_p}\|_{L^p(\gamma_n)}}
\leq 
2^{1/p - 1/q}
\frac{\|Z\|_{L^q}}{\|Z\|_{L^p}},
\end{equation}
by the sharp form of the Gaussian Kahane--Khinchine inequality (due to
Lata{\l}a-Oleszkiewicz~\cite[Corollary 3]{LatOle99}, who attribute
the
result to S.\ Szarek).
If $G \sim \gamma_n$ and $Z \sim \gamma_1$, then
\[
\|\Proj{K^\star_n}\|_{L^r(\gamma_n)} = \|(G_1)_+\|_{L^r} = 2^{-1/r}
\|Z\|_{L^r}, \quad \mbox{for}~1 \leq r < \infty.
\]
Therefore, 
\begin{equation}
\label{eqn:lower-inequality-for-a-cone}
\frac{\|\Proj{K^\star_n}\|_{L^q(\gamma_n)}}{\|\Proj{K^\star_n}\|_{L^p(\gamma_n)}}
= 2^{1/p - 1/q}
\frac{\|Z\|_{L^q}}{\|Z\|_{L^p}}.
\end{equation}
\end{subequations}
Thus, the result follows, for Gauss space, by combining the inequalities 
\eqref{eqn:upper-inequality-for-a-cone} and 
\eqref{eqn:lower-inequality-for-a-cone}.
The spherical result similarly follows from 
the Gaussian
result and the rescaling
relation~\eqref{eqn:rescaling-for-the-sphere}.
\end{proof}

\begin{remark}[Statistical consequences]
Consider the statistical minimax risk
\[
\mathcal{M}_p(C, \mu) = \inf_{\hat \theta} \sup_{\theta \in C} \Bigg(\E_{\xi \sim \mu} \Big[\|\hat \theta(\theta + \xi) - \theta\|_2^p\Big]\Bigg)^{1/p},
\quad \mbox{for}~p\geq 1.
\]
It can be verified that $\mathcal M_1(C, \gamma_n) \asymp \mathcal M_2(C, \gamma_n)$ for any closed convex set $C \subset \R^n$. Let $C^\star_n$ be approximately extremal, \ie such that 
\[
\|\Proj{C^\star_n}\|_{L^2(\gamma_n)} \asymp \ExtProbProj{1}{2}{\gamma_n} \, 
\|\Proj{C^\star_n}\|_{L^1(\gamma_n)}.
\] 
Since $\ExtProbProj{1}{2}{\gamma_n} \gg 1$, it follows that for $\hat \theta(y) = \Proj{C^\star_n}(y)$,
\[
\sup_{\theta \in C^\star_n} \E_{\xi \sim \gamma_n} \|\hat \theta(\theta + \xi) - \theta\|_2^2 \gg \big[\mathcal M_2(C^\star_n, \gamma_n)\big]^2.
\]
In other words, the least-squares estimator (LSE) is also minimax-suboptimal for $C^\star_n$.
\end{remark}

\bibliographystyle{abbrv}
\bibliography{references}
\end{document}